\newtheorem{theorem}{Theorem}[section]
\newtheorem{corollary}[theorem]{Corollary}
\newtheorem{lemma}[theorem]{Lemma}
\newtheorem{proposition}[theorem]{Proposition}
\theoremstyle{definition}
\newtheorem{definition}[theorem]{Definition}
\newtheorem{remark}[theorem]{Remark}
\newtheorem{example}[theorem]{Example}
\numberwithin{equation}{section}
\newcommand\inner[2]{\left\langle #1, #2 \right\rangle}
\newcommand\xleftrightarrow[2][]{%
  \ext@arrow 9999{\longleftrightarrowfill@}{#1}{#2}}
\newcommand\longleftrightarrowfill@{%
  \arrowfill@\leftarrow\relbar\rightarrow}
\newcommand{\tco}{\mathcal{T}}
\newcommand{\bo}{\mathcal{L}(L^2)}
\newcommand{\beauty}{\mathcal{B}}
\newcommand{\beast}{\mathcal{B}'}
\newcommand{\HS}{\mathcal{HS}}
\newcommand{\C}{\mathbb{C}}
\newcommand{\R}{\mathbb{R}}
\newcommand{\Rd}{\mathbb{R}^d}
\newcommand{\Rdd}{\mathbb{R}^{2d}}
\newcommand{\Z}{\mathbb{Z}}
\newcommand{\N}{\mathbb{N}}
\newcommand{\F}{\mathcal{F}}
\newcommand{\tr}{\mathrm{tr}}
\newcommand{\frameop}{\mathfrak{S}}
\newcommand{\kernel}{k}
\newcommand{\weyl}{a}
\begin{document}




\title[Gabor g-frames]{On Gabor g-frames and Fourier series of operators}

\author[E. Skrettingland]{Eirik Skrettingland}
\address{Department of Mathematics\\ NTNU Norwegian University of Science and
Technology\\ NO–7491 Trondheim\\Norway}
\email{eirik.skrettingland@ntnu.no}

\begin{abstract}
  We show that Hilbert-Schmidt operators can be used to define frame-like structures for $L^2(\Rd)$ over lattices in $\Rdd$ that include multi-window Gabor frames as a special case. These frame-like structures are called Gabor g-frames, as they are examples of g-frames as introduced by Sun. We show that Gabor g-frames share many properties of Gabor frames, including a Janssen representation and Wexler-Raz biorthogonality conditions. A central part of our analysis is a notion of Fourier series of periodic operators based on earlier work by Feichtinger and Kozek, where we show in particular a Poisson summation formula for trace class operators. By choosing operators from certain Banach subspaces of the Hilbert Schmidt operators, Gabor g-frames give equivalent norms for modulation spaces in terms of weighted $\ell^p$-norms of an associated sequence, as previously shown for localization operators by D\"orfler, Feichtinger and Gr\"ochenig. 
\end{abstract}

\subjclass[2020]{42C15,47B38,47G30,47B10,43A32}

\keywords{Gabor frame, g-frame, Cohen class, Janssen representation, pseudodifferential operator, trace class, modulation space}

\maketitle

\section{Introduction}
The study of Gabor frames is today an essential part of time-frequency analysis. By fixing a window function $\varphi\in L^2(\Rd)$, a signal $\psi \in L^2(\Rd)$ is analyzed by considering its projections onto copies of $\varphi$ shifted in time and frequency. In other words, one considers the \textit{short-time Fourier transform}
\begin{equation*}
  V_\varphi \psi(z) = \inner{\psi}{\pi(z)\varphi}_{L^2} \quad \text{ for } z\in \Rdd,
\end{equation*}
 where $\pi(z)$ is the time-frequency shift operator defined by $\pi(z)\varphi(t)=e^{2\pi i \omega\cdot t} \varphi(t-x)$ for $z=(x,\omega)\in \Rdd$. If $\varphi$ is well-behaved, one interprets $|V_\varphi \psi(x,\omega)|^2$ as a measure of the contribution of the frequency $\omega$ at the time $x$ in the signal $\psi$. Given a lattice $\Lambda=A\Z^{2d}$ for $A\in GL(2d,\R)$, $\varphi$ generates a \textit{Gabor frame} over  $\Lambda$ if the $\ell^2$-norm of the sequence $\{V_\varphi \psi(\lambda)\}_{\lambda \in \Lambda}$ is equivalent to the $L^2$-norm of $\psi$, i.e. there should exist constants $A,B>0$ such that
 \begin{equation} \label{eq:intro:gaborframeop}
  A\|\psi\|_{L^2}^2 \leq \sum_{\lambda\in \Lambda} |V_\varphi \psi(\lambda)|^2  \leq B \|\psi\|_{L^2}^2 \quad \text{ for any } \psi\in L^2(\Rd).
\end{equation}
In the usual terminology of frames, see for instance the monographs \cite{Grochenig:2001,Christensen:2016,Heil:2011}, this simply means that $\{\pi(\lambda)\varphi\}_{\lambda \in \Lambda}$ is a frame for $L^2(\Rd)$, and \eqref{eq:intro:gaborframeop} is equivalent to the fact that the \textit{frame operator} 
\begin{equation*}
  \psi \mapsto \sum_{\lambda \in \Lambda} V_\varphi \psi(\lambda)\pi(\lambda) \varphi
\end{equation*}
is bounded and invertible on $L^2(\Rd)$. Research over the last thirty years has revealed several intriguing features of Gabor frames, among them the Janssen representation of the frame operator \cite{Janssen:1995,Feichtinger:1998,Rieffel:1988}, the Wexler-Raz biorthogonality conditions \cite{Wexler:1990,Janssen:1995,Daubechies:1995,Feichtinger:1998} and that for well-behaved windows $\varphi$ summability conditions on the coefficients $\{V_\varphi \psi(\lambda)\}_{\lambda \in \Lambda}$ characterize smoothness and decay properties of $\psi$ \cite{Feichtinger:1989,Grochenig:2004,Feichtinger:1997}.

The aim of this paper is to show that Gabor frames over a lattice $\Lambda\subset \Rdd$ are a special case of a more general situation, namely that Hilbert-Schmidt operators on $L^2(\Rd)$ can be used to define a frame-like structure for $L^2(\Rd)$. These structures are obtained by shifting a "window" operator $S$ over $\Lambda$ by the operation
\begin{equation*}
  \alpha_z(S)=\pi(z)S\pi(z)^* \quad \text{ for }z\in \Rdd.
\end{equation*}
 Following Werner \cite{Werner:1984} and Kozek \cite{Kozek:1992} we consider $\alpha_\lambda(S)$ to be a translation of $S$ by $\lambda$.  Our main definition is that $S$ generates a \textit{Gabor g-frame} for $L^2(\Rd)$ if there exist constants $A,B>0$ such that
\begin{equation} \label{eq:intro:framedef}
  A\|\psi\|_{L^2}^2\leq \sum_{\lambda \in \Lambda} \|\alpha_\lambda(S)\psi\|_{L^2}^2 \leq B\|\psi\|_{L^2}^2 \quad \text{ for } \psi\in L^2(\Rd).
\end{equation}
When $S$ is a rank-one operator we recover the definition of Gabor frames -- more generally we obtain multi-window Gabor frames \cite{Zibulski:1997} if $S$ is of finite rank.  If \eqref{eq:intro:framedef} holds, the associated \textit{g-frame operator} $\frameop_S$ given by 
\begin{equation} \label{eq:intro:frameop}
  \frameop_S(\psi)=\sum_{\lambda\in \Lambda} \alpha_ \lambda (S^*S)\psi, 
\end{equation}
is bounded and invertible on $L^2(\Rd)$, and we show that this operator is the composition of two other natural operators: the analysis and synthesis operators. A major goal of this paper is to show that although Gabor g-frames are not frames, they nevertheless share much of the structure of Gabor frames. Our terminology stems from the fact that Gabor g-frames are examples of g-frames as introduced by Sun \cite{Sun:2006}, but apart from terminology the abstract theory of g-frames does not feature much in this paper. 

\subsubsection*{Fourier series of operators and the Janssen representation} 

Our investigations into the structure of Gabor g-frames naturally lead to the study of a notion of Fourier series of operators, inspired by the analysis of periodic operators by Feichtinger and Kozek \cite{Feichtinger:1998} and the quantum harmonic analysis of Werner \cite{Werner:1984}. By Fourier series for operators we mean that a $\Lambda$-periodic operator $T$ -- meaning that $\alpha_\lambda(T)=T$ for all $\lambda\in \Lambda$ -- has an expansion of the form
\begin{equation} \label{eq:intro:fourierseries}
  T= \sum_{\lambda^\circ \in \Lambda^\circ} c_{\lambda^\circ} e^{-\pi i \lambda^\circ _x \cdot \lambda^\circ _\omega} \pi(\lambda^\circ).
\end{equation}
Here $\Lambda^\circ$ is the adjoint lattice of $\Lambda$ defined in Section \ref{sec:janssen}, and we write $\lambda^\circ=(\lambda^\circ_x,\lambda^\circ_\omega)$. Such expansions have also been studied in \cite{Feichtinger:1998}, and the interpretation that this is a Fourier series of operators follows from considering the operator $e^{-i\pi x\cdot \omega}\pi(z)$ for $z=(x,\omega)\in \Rdd$ as the operator-analogue of the character $t\mapsto e^{2\pi i z\cdot t}$ on $\Rdd$. This interpretation is strengthened by the fact that an analogue of Wiener's classical lemma for absolutely summable Fourier series also holds for operators, by a result of Gr\"ochenig and Leinert \cite{Grochenig:2004}.  We show that any $\Lambda$-periodic bounded operator on $L^2(\Rd)$ has a Fourier series expansion \eqref{eq:intro:fourierseries}. This is not the only possible approach to Fourier series of operators, see for instance \cite{Bochner:1942,deLeeuw:1973,deLeeuw:1975,deLeeuw:1977}, and we also remark that periodic operators have been studied in \cite[Prop. 5.5]{Balazs:2014inverse}.

Due to the form of the Gabor g-frame operator \eqref{eq:intro:frameop} it is particularly interesting to study the Fourier series expansion of periodic operators $T$ given by a periodization over $\Lambda$: $$T=\sum_{\lambda \in \Lambda} \alpha_\lambda(R)$$ for some operator $R$. This leads to the following \textit{Poisson summation formula for trace class operators}: if $R$ is a trace class operator, then
\begin{equation} \label{eq:intro:poisson}
  \sum_{\lambda \in \Lambda} \alpha_\lambda(R)=\frac{1}{|\Lambda|} \sum_{\lambda^\circ \in \Lambda^\circ}\F_W(R)(\lambda^\circ)e^{-\pi i \lambda^\circ _x \cdot \lambda^\circ _\omega} \pi(\lambda^\circ),
\end{equation}
where $\F_W$ is the Fourier-Wigner transform of $R$ defined by 
\begin{equation*}
  \F_W(R)(z)=e^{-\pi i x\cdot \omega} \tr(\pi(-z)R) \quad \text{ for } z=(x,\omega)\in \Rdd,
\end{equation*}
which Werner \cite{Werner:1984} argued is a Fourier transform of operators. Showing that \eqref{eq:intro:poisson} holds for all trace class operators requires a careful study of the continuity of several mappings. Equation \eqref{eq:intro:poisson} is an analogue of the usual Poisson summation formula for functions: the Fourier coefficients of a periodization $\sum_{\lambda \in \Lambda} \alpha_\lambda(R)$ is given by the samples of the Fourier transform of $R$. Comparing \eqref{eq:intro:poisson} with \eqref{eq:intro:frameop}, we obtain an alternative expression for the g-frame operator of a Gabor g-frame which generalizes the Janssen representation for Gabor frames. This generalized Janssen representation allows us to deduce an extension of the Wexler-Raz biorthogonality conditions to Gabor g-frames, and to establish painless procedures for making Gabor g-frames using \textit{underspread} operators.

\subsubsection*{Time-frequency localization and Gabor g-frames}
The definition \eqref{eq:intro:framedef} has a particularly interesting interpretation if $\alpha_\lambda(S)\psi$ can, in some sense, be interpreted as the part of the signal $\psi$ localized around the point $\lambda$ in the time-frequency plane $\Rdd$. In this case, one may interpret $\|\alpha_\lambda(S)\psi\|_{L^2}$ as a measure of the part of $\psi$ localized around $\lambda$ in the time-frequency plane. For instance, picking a rank-one operator $S=\varphi\otimes \varphi$ for $\varphi\in L^2(\Rd)$, one finds that $\|\alpha_\lambda(S)\psi\|_{L^2}=|V_\varphi \psi(\lambda)|$, which is the measure of localization of $\psi$ around $\lambda$ used in Gabor frames. Another prime example of operators $S$ where $\alpha_\lambda(S)\psi$ has this interpretation are the localization operators $A_{\chi_\Omega}^\varphi$ with domain $\Omega\subset \Rdd$ and window $\varphi\in L^2(\Rd)$ introduced by Daubechies \cite{Daubechies:1988,Cordero:2003,Dorfler:2014}, and the inequalities \eqref{eq:intro:framedef} have been studied for such operators by D\"orfler, Feichtinger and Gr\"ochenig \cite{Dorfler:2006,Dorfler:2011}. The results of \cite{Dorfler:2006,Dorfler:2011} are therefore a second important class examples of Gabor g-frames in addition to (multi-window) Gabor frames.

In our terminology, \cite{Dorfler:2006,Dorfler:2011} showed that if $A_{\chi_\Omega}^\varphi$ generates a Gabor g-frame with well-behaved window $\varphi$, then weighted $\ell^p$-norms of $\{\|\alpha_\lambda(A_{\chi_\Omega}^\varphi)\psi\|_{L^2}\}_{\lambda\in \Lambda}$ are equivalent to the norm of $\psi$ in  modulation spaces. By the properties of modulation spaces, this implies that smoothness and decay properties of $\psi$ are captured by the coefficients $\{\|\alpha_\lambda(A_{\chi_\Omega}^\varphi)\psi\|_{L^2}\}_{\lambda \in \Lambda}$. A similar result is well-known for Gabor frames \cite{Feichtinger:1989,Grochenig:2001,Feichtinger:1997}, and in Corollary \ref{cor:equivnorms} we extend this to a result for Gabor g-frames that includes Gabor frames and localization operators as special cases.

The fact that the results of \cite{Dorfler:2006,Dorfler:2011} can be incorporated into the theory of Gabor g-frames allows us to understand exactly how a signal $\psi$ is recovered from its time-frequency localized components $\psi_\lambda:=\alpha_\lambda(A_{\chi_\Omega}^\varphi) \psi$ for $\lambda \in \Lambda$. In fact, we show that $A_{\chi_\Omega}^\varphi$ has a canonical dual operator $R$, such that
\begin{equation*}
  \psi=\sum_{\lambda \in \Lambda}\alpha_\lambda(R^*)\psi_\lambda \quad \text{ for any } \psi \in L^2(\Rd).
\end{equation*}
This is a generalization of a well-known fact for Gabor frames to Gabor g-frames (and in particular the localization operators of \cite{Dorfler:2006,Dorfler:2011}), namely that if $\varphi\in L^2(\Rd)$ generates a Gabor frame, then there is a canonical dual window $\varphi'\in L^2(\Rd)$ with 
\begin{equation*}
  \psi=\sum_{\lambda\in \Lambda} V_\varphi \psi(\lambda) \pi(\lambda)\varphi' \quad \text{ for any } \psi \in L^2(\Rd).
\end{equation*}

\subsubsection*{Cohen's class and Gabor g-frames} A different perspective on Gabor g-frames uses Cohen's class of time-frequency distributions \cite{Cohen:1966}. In the formalism of \cite{Luef:2018b}, $\|\alpha_\lambda(S)\psi\|_{L^2}^2$ equals $Q_{S^*S}(\psi)(\lambda)$, where $Q_{S^*S}$ is the Cohen's class distribution associated with the operator $S^*S$ as defined in \cite{Luef:2018b}. Hence equation \eqref{eq:intro:framedef} states that the $\ell^1$-norm of the samples $\{Q_{S^*S}(\psi)(\lambda)\}_{\lambda \in \Lambda}$ should be an equivalent norm on $L^2(\Rd)$. A simple example of a Cohen's class distribution is the spectrogram $|V_{\varphi}\psi(z)|^2$ for a window $\varphi$, which corresponds to picking rank-one $S$. Hence the move from Gabor frames to Gabor g-frames corresponds to replacing the spectrogram by a more general Cohen's class distribution, and we show that much of the structure of Gabor frames is preserved.

\subsubsection*{Technical tools}
We give a brief overview of the non-standard technical tools needed to prove the results of the paper. We will utilize a Banach subspace $\beauty$ of the trace class operators, as studied by \cite{Feichtinger:1998,Cordero:2008,Feichtinger:2009}. The space $\beauty$ consists of operators with kernel (as integral operators) in the so-called Feichtinger algebra \cite{Feichtinger:1981}, and we aim to show readers with backgrounds in other areas than time-frequency analysis the usefulness of $\beauty$. For instance, if $R\in \beauty$ the sum on the right hand side of \eqref{eq:intro:poisson} converges absolutely in the operator norm. The same will hold if we pick $R$ from the smaller space of Schwartz operators \cite{Keyl:2016}, but the Schwartz operators do not form a Banach space. Hence $\beauty$ combines desirable features from the trace class operators and the Schwartz operators: it is a Banach space, yet small enough to have properties not shared by arbitrary trace class operators. A new aspect in this paper is that we also develop a theory of weighted versions of $\beauty$, and we use the projective tensor product of Banach spaces to establish a decomposition of operators in the weighted $\beauty$-spaces in terms of rank-one operators. 

We will also use the dual space $\beast$ with its weak* topology. The sums in the Poisson summation formula \eqref{eq:intro:poisson} for trace class operators converge in this topology, but not necessarily in the weak* topology of the bounded operator $\bo$ -- hence $\beast$ is necessary even for studying trace class operators. 

In order to write the g-frame operator \eqref{eq:intro:frameop} as the composition of an analysis operator and a synthesis operator we will need the $L^2$-valued sequence spaces $\ell^p_m(\Lambda;L^2)$, consisting of sequences $\{\psi_\lambda\}_{\lambda \in \Lambda}\subset L^2(\Rd)$ such that $\sum_{\lambda\in \Lambda} \|\psi_\lambda\|_{L^2}^pm(\lambda)^p <\infty$, where $m$ is a weight function. The use of these Banach spaces is key to reducing statements about Gabor g-frames to known results for Gabor frames in Section \ref{sec:eqnorms}. 

\subsubsection*{Organization}  
We recall some definitions and results from time-frequency analysis, pseudodifferential operators and g-frames in Section \ref{sec:prelim}. Section \ref{sec:operators} is devoted to introducing and studying one of our main tools: Banach spaces of operators with kernels in certain weighted function spaces and their  decomposition into rank-one operators. The definition and basic properties of Gabor g-frames are given in Section \ref{sec:ggf}. The theory of Fourier series of operators and its applications to Gabor g-frames, including a Janssen representation and Wexler-Raz biorthogonality for Gabor g-frames, is explored in Section \ref{sec:janssen}. Section \ref{sec:eqnorms} is devoted to using Gabor g-frames to obtain equivalent norms for modulation spaces. Finally the relation of Gabor g-frames to countably generated multi-window Gabor frames using the singular value decomposition is explained in Section \ref{sec:singval}.

\section{Notation and conventions}
By a lattice $\Lambda$ we mean a full-rank lattice in $\Rdd$, i.e. $\Lambda=A\Z^{2d}$ for some  $A\in GL(2d,\R)$. The volume of $\Lambda=A\Z^{2d}$ is $|\Lambda|:=\det(A)$. The Haar measure on $\Rdd/\Lambda$ will always be normalized so that $\Rdd/\Lambda$ has total measure $1$. 

If $X$ is a Banach space and $X'$ its dual space, the action of $y\in X'$ on $x\in X$ is denoted by the bracket $\inner{y}{x}_{X',X}$, where the bracket is antilinear in the second coordinate to be compatible with the notation for inner products in Hilbert spaces. This means that we are identifying the dual space $X'$ with \textit{anti}linear functionals on $X$. For two Banach spaces $X,Y$ we denote by $\mathcal{L}(X,Y)$ the Banach space of bounded linear operators $S:X\to Y$, and if $X=Y$ we simply write $\mathcal{L}(X)$. The notation $X\hookrightarrow Y$ denotes a norm-continuous embedding of Banach spaces. 

For $p\in [1,\infty]$, $p'$ denotes the conjugate exponent, i.e. $\frac{1}{p}+\frac{1}{p'}=1$. The notation $P \lesssim Q$ means that there is some $C>0$ such that $P\leq C\cdot Q$, and $P\asymp Q$ means that $Q\lesssim P$ and $P\lesssim Q$. For $\Omega \subset \Rdd$, $\chi_\Omega$ is the characteristic function of $\Omega.$
 
\section{Preliminaries} \label{sec:prelim}
\subsection{Time-frequency analysis and modulation spaces}
The fundamental operators in time-frequency analysis are the translation operators $T_x$ and the modulation operators $M_\omega$ for $x,\omega\in \Rd$, defined by
\begin{align*}
 & (T_x\psi)(t)=\psi(t-x), & (M_\omega \psi)(t)=e^{2\pi i \omega \cdot t}\psi(t) \quad \text{ for } \psi \in L^2(\Rd).
\end{align*}
By composing these operators, we get the time-frequency shifts $\pi(z):=M_\omega T_x$ for $z=(x,\omega)\in \Rdd$, given by
\begin{equation*}
  (\pi(z)\psi)(t)=e^{2\pi i \omega \cdot t}\psi(t-x) \quad \text{ for } \psi \in L^2(\Rd).
\end{equation*}
The time-frequency shifts $\pi(z)$ are unitary operators on $L^2(\Rd)$, with adjoint $\pi(z)^*=e^{-2\pi i x\cdot \omega}\pi(-z)$ for $z=(x,\omega)$. For $\psi,\phi\in L^2(\Rd)$ we use the time-frequency shifts to define the \textit{short-time Fourier transform } $V_\phi \psi$ of $\psi$ with window $\phi$ by 
\begin{equation} \label{eq:stft}
  V_\phi \psi (z)=\inner{\psi}{\pi(z)\phi}_{L^2} \quad \text{ for } z\in \Rdd.
\end{equation}
The short-time Fourier transform satisfies an orthogonality condition, sometimes called Moyal's identity \cite{Grochenig:2001,Folland:1989}.
\begin{lemma}[Moyal's identity]
If $\psi_1, \psi_2, \phi_1, \phi_2 \in L^2(\R^d)$, then $V_{\phi_i}\psi_j \in L^2(\R^{2d})$ for $i,j\in \{1,2\}$ and 
\begin{equation*}
	\inner{V_{\phi_1}\psi_1}{V_{\phi_2}\psi_2}_{L^2}=\inner{\psi_1}{\psi_2}_{L^2}\overline{\inner{\phi_1}{\phi_2}}_{L^2},
\end{equation*}
 where the leftmost inner product is in $L^2(\R^{2d})$ and those on the right are in $L^2(\Rd)$.
\end{lemma}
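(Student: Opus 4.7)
The plan is to reduce Moyal's identity to Plancherel's theorem applied in the frequency variable. The central observation is that, for fixed $x \in \Rd$, the value $V_\phi \psi(x,\omega)$ is the Fourier transform in the time variable $t$ of the product $g_x(t) := \psi(t)\overline{\phi(t-x)}$, evaluated at $\omega \in \Rd$. Once this is noticed, the job is to apply Plancherel fibrewise in $\omega$ and then integrate in $x$.

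First I would establish the diagonal case $\|V_\phi\psi\|_{L^2(\Rdd)}^2 = \|\psi\|_{L^2}^2 \|\phi\|_{L^2}^2$ for $\psi,\phi$ in the Schwartz space $\mathcal{S}(\Rd)$, where every integral below is absolutely convergent and $g_x\in\mathcal{S}(\Rd)$. Plancherel in $\omega$ gives
$$\int_{\Rd}|V_\phi\psi(x,\omega)|^2\,d\omega = \int_{\Rd}|\psi(t)|^2|\phi(t-x)|^2\,dt,$$
and Tonelli's theorem, together with the substitution $t-x\mapsto y$ in the inner integral, produces the claimed $\|\psi\|_{L^2}^2 \|\phi\|_{L^2}^2$.

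Next I would extend this norm identity to arbitrary $\psi,\phi \in L^2(\Rd)$ by a density argument. Picking Schwartz approximants $\psi^n \to \psi$ and $\phi^n \to \phi$ in $L^2$, the Schwartz case combined with sesquilinearity shows that $V_{\phi^n}\psi^n$ is Cauchy in $L^2(\Rdd)$, hence converges to some $F$. At the same time the pointwise Cauchy–Schwarz bound $|V_\phi\psi(z)| \le \|\psi\|_{L^2}\|\phi\|_{L^2}$ gives $V_{\phi^n}\psi^n \to V_\phi\psi$ uniformly on $\Rdd$. Passing to an a.e.-convergent subsequence of the $L^2$-limit identifies $F$ with $V_\phi\psi$, so $V_\phi\psi \in L^2(\Rdd)$ with $\|V_\phi\psi\|_{L^2(\Rdd)} = \|\psi\|_{L^2}\|\phi\|_{L^2}$.

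Finally, a standard polarization in both pairs of arguments turns the norm identity into the full sesquilinear Moyal formula. The main technical point to watch is precisely the extension step: for generic $L^2$ inputs the product $g_x$ need not lie in $L^2(\Rd)$, so one cannot invoke Plancherel on $g_x$ directly, and it is the combination of the Schwartz-level isometry with the uniform continuity estimate $\|V_\phi\psi\|_{L^\infty}\le \|\psi\|_{L^2}\|\phi\|_{L^2}$ that safely transfers the identity to the whole of $L^2(\Rd)\times L^2(\Rd)$.
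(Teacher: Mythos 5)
The paper does not prove this lemma; it is quoted as a known result with a pointer to the standard references (Gr\"ochenig's \emph{Foundations of Time-Frequency Analysis}, Folland's \emph{Harmonic Analysis in Phase Space}). Your argument is correct and is essentially the textbook proof: identify $V_\phi\psi(x,\cdot)$ as the Fourier transform of $t\mapsto \psi(t)\overline{\phi(t-x)}$, apply Plancherel fibrewise, integrate in $x$, and polarize; you are also right that the only delicate point is that $\psi\cdot\overline{T_x\phi}$ is merely in $L^1(\Rd)$ for general $L^2$ data, and your density-plus-uniform-convergence step handles this cleanly. A minor remark: one can avoid the approximation step altogether by observing that $\int_{\Rd}\int_{\Rd}|\psi(t)|^2|\phi(t-x)|^2\,dt\,dx=\|\psi\|_{L^2}^2\|\phi\|_{L^2}^2<\infty$ forces $\psi\cdot\overline{T_x\phi}\in L^1\cap L^2(\Rd)$ for almost every $x$, so the fibrewise Plancherel identity already holds a.e.\ in $x$ for arbitrary $L^2$ windows; but this buys only brevity, not generality.
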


\subsubsection{Weight functions}
To define the appropriate function spaces for our setting -- the modulation spaces -- we need to consider weight functions on $\Rdd$. In this paper, a \textit{weight function} is a continuous and positive function on $\Rdd$. We will always let $v$ denote a \textit{submultiplicative weight function satisfying the GRS-condition.} That $v$ is submultiplicative means that $$v(z_1+z_2)\leq v(z_1)v(z_2) \text{ for any } z_1,z_2\in \Rdd,$$ and the GRS-condition says that $$\lim_{n\to \infty} (v(nz))^{1/n}=1 \text{ for any } z\in \Rdd.$$ Furthermore, we will assume that $v$ is symmetric in the sense that $v(x,\omega)=v(-x,\omega)=v(x,-\omega)=v(-x,-\omega)$ for any $(x,\omega)\in \Rdd$, which along with submultiplicativity implies that $v\geq 1$ \cite{Grochenig:2007w}. 

By $m$ we will always mean a weight function that is \textit{v-moderate}; this means that 
\begin{equation} \label{eq:vmoderate}
  m(z_1+z_2)\lesssim m(z_1)v(z_2) \text{ for any } z_1,z_2\in \Rdd.
\end{equation}
The interested reader is encouraged to consult the survey \cite{Grochenig:2007w} for an excellent exposition of the reasons for making these assumptions in time-frequency analysis. The less interested reader may safely assume that all weights are polynomial weights 
$
  v_s(z)=(1+|z|)^s 
$
for some $s\geq 0.$ 

\subsubsection{Modulation spaces}
 Let  $\phi_0$ be the normalized (in $L^2$-norm) Gaussian $\phi_0(x)=2^{d/4}e^{-\pi x\cdot x}$ for $x\in \Rd$, and let $v$ be a submultiplicative, symmetric GRS-weight. We first define the space $M^1_v(\Rd)$ to be the space of $\psi\in L^2(\Rd)$ such that
\begin{equation*}
  \|\psi\|_{M^1_v} := \int_{\Rd} \int_{\Rd}|V_{\phi_0}\psi(z)| v(z) \,dz <\infty.
\end{equation*}
 For $p\in [1,\infty]$ and a $v$-moderate weight function $m$ we then define the \textit{modulation space} $M_{m}^{p}(\Rd)$ to be the set of $\psi$ in the (antilinear) dual space $\left(M^1_v(\Rd)\right)'$ with
\begin{equation} \label{eq:modspacenorm}
  \|\psi\|_{M^{p}_m} := \left(\int_{\Rd} \int_{\Rd}|V_{\phi_0}\psi(z)|^p m(z)^p \,dz \right)^{1/p}<\infty,
\end{equation}
where the integral is replaced by a supremum in the usual way when $p=\infty.$
In \eqref{eq:modspacenorm}, $V_{\phi_0}\psi$ must be interpreted by (antilinear) duality, meaning that we extend the definition in equation \eqref{eq:stft} by defining 
\begin{equation*}
  V_{\phi_0}\psi(z)=\inner{\psi}{\pi(z)\phi_0}_{(M^1_v)',M^1_v}.
\end{equation*}
For $m\equiv 1$ we will write $M^p(\Rd):=M^p_{m}(\Rd)$. We summarize a few of the useful properties of modulation spaces in a proposition, see \cite{Grochenig:2001} for the proofs.

\begin{proposition} \label{prop:modulationspaces}
Let $m$ be a $v$-moderate weight and $p\in [1,\infty]$.
	\begin{enumerate}[(a)]
		\item $M^p_{m}(\Rd)$ is a Banach space with the norm defined in \eqref{eq:modspacenorm}.
		\item If we replace $\phi_0$ with another function $0\neq \phi\in M^1_{v}(\Rd)$ in \eqref{eq:modspacenorm}, we obtain the same space $M_{m}^p(\Rd)$ as with $\phi_0$, with equivalent norms.
		\item If $1\leq p_1\leq p_2\leq \infty$ and $m_2\lesssim m_1$, then $M_{m_1}^{p_1}(\Rd)\hookrightarrow M_{m_2}^{p_2}(\Rd)$. 
		\item If $p<\infty$ and $\frac{1}{p}+\frac{1}{p'}=1$, then $M_{1/m}^{p'}(\Rd)$ is the dual space of $M^{p}_{m}(\Rd)$ with
 \begin{equation} \label{eq:duality}
  \inner{\phi}{\psi}_{M^{p'}_{1/m},M^{p}_m}=\int_{\Rdd} V_{\phi_0}\phi(z)\overline{V_{\phi_0}\psi(z)} \ dz.
\end{equation}
		\item The operators $\pi(z)$ can be extended to bounded operators on $M^{p}_m(\Rd)$ with $\|\pi(z)\psi\|_{M^{p}_m}\lesssim v(z)\|\psi\|_{M^{p}_m}$ for $\psi\in M^{p}_m(\Rd)$.
		\item $L^2(\Rd)=M^2(\Rd)$ with equivalent norms.
		\item $M^1_v(\Rd)$ is dense in $M^p_m(\Rd)$ for $p<\infty$ and weak*-dense in $M^\infty_m(\Rd)$.
	\end{enumerate}
\end{proposition}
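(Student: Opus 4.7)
The plan is to prove all seven items as consequences of two fundamental facts: Moyal's identity, and the pointwise estimate
\begin{equation*}
|V_\phi\psi(z)| \leq \frac{1}{|\inner{\phi_0}{\phi_0}|}\bigl(|V_{\phi_0}\psi| * |V_\phi\phi_0|\bigr)(z),
\end{equation*}
which follows from writing $\psi$ as a continuous superposition of time-frequency shifts of $\phi_0$ (i.e.\ the reproducing formula derived from Moyal's identity). Combined with Young's inequality for $v$-moderate weights, $L^p_m * L^1_v \hookrightarrow L^p_m$, this single inequality drives most of the argument.

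First I would dispatch (f) directly from Moyal's identity applied with $\phi_1=\phi_2=\phi_0$, which gives $\|V_{\phi_0}\psi\|_{L^2(\Rdd)}=\|\psi\|_{L^2(\Rd)}$ since $\phi_0$ is $L^2$-normalized. Next I would establish (b) as the first nontrivial step: applying the pointwise estimate above with an alternative window $\phi\in M^1_v$, and invoking Young's inequality with the fact that $V_{\phi_0}\phi\in L^1_v$ precisely because $\phi\in M^1_v$ (here the symmetry of $v$ enters to identify $\|V_{\phi_0}\phi\|_{L^1_v}$ with $\|\phi\|_{M^1_v}$). Once (b) is known, (a) follows: a Cauchy sequence $\{\psi_n\}$ in $M^p_m$ yields a Cauchy sequence $\{V_{\phi_0}\psi_n\}$ in $L^p_m(\Rdd)$ with some limit $F$; the reproducing identity $V_{\phi_0}\psi_n = V_{\phi_0}\psi_n * (V_{\phi_0}\phi_0)/\|\phi_0\|_{L^2}^2$ passes to the limit by the same Young inequality, and $F=F* V_{\phi_0}\phi_0/\|\phi_0\|_{L^2}^2$ forces $F$ to lie in the range of $V_{\phi_0}$, producing a unique $\psi\in (M^1_v)'$ with $V_{\phi_0}\psi=F$.

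Property (c) is a combination of the pointwise bound $m_2\lesssim m_1$ and the nesting of weighted $L^p$ spaces applied to elements of the reproducing subspace: since $V_{\phi_0}\psi$ is bounded via the convolution estimate once $p_1<\infty$, interpolation between $L^{p_1}_{m_1}$ control and this $L^\infty$ bound (weighted by $m_2$) yields membership in $L^{p_2}_{m_2}$. For (e) I would use the covariance identity
\begin{equation*}
V_{\phi_0}(\pi(z)\psi)(w) = e^{-2\pi i (w-z)_\omega\cdot z_x}\, V_{\phi_0}\psi(w-z),
\end{equation*}
a change of variables, and the $v$-moderacy bound $m(w)\leq C\, m(w-z)v(z)$, which gives the factor $v(z)$ immediately. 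For (d) the pairing \eqref{eq:duality} is well defined on $M^{p'}_{1/m}\times M^p_m$ by Hölder with the weights $1/m$ and $m$; surjectivity onto the dual of $M^p_m$ uses the STFT to push a functional on $M^p_m$ to one on a closed subspace of $L^p_m(\Rdd)$, extend by Hahn--Banach, represent by an element of $L^{p'}_{1/m}(\Rdd)$, and invert using the reproducing formula to land in $M^{p'}_{1/m}$. Finally (g) follows by approximating $V_{\phi_0}\psi$ in $L^p_m$ (or weak*) by compactly supported continuous functions and applying the synthesis operator $F\mapsto \int F(z)\pi(z)\phi_0\, dz$, which maps into $M^1_v$ for compactly supported $F$.

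The main obstacle I anticipate is the bootstrap in (a): the space $(M^1_v)'$ in which all $M^p_m$ are declared to live requires $M^1_v$ itself to be complete, so one must first verify (a) in the base case $p=1$, $m=v$ by hand, using that the map $\psi\mapsto V_{\phi_0}\psi$ is an isometry from $M^1_v$ onto a closed subspace of $L^1_v(\Rdd)$ cut out by the reproducing equation. The rest of the verification then proceeds uniformly in $p$ and $m$. A secondary technical point is ensuring the convolution-reproducing argument really does identify the limit as an antilinear functional on $M^1_v$ rather than merely as an $L^p_m$ function; this is handled by defining $\psi$ directly by $\inner{\psi}{\phi}_{(M^1_v)',M^1_v}:=\int F(z)\overline{V_{\phi_0}\phi(z)}\,dz$ and checking continuity against $M^1_v$ via Hölder with the weight pair $(m,1/m)$ together with the embedding $M^1_v\hookrightarrow M^{p'}_{1/m}$ from (c).
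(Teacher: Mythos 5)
The paper offers no proof of this proposition---it is stated as a summary of standard facts with the proofs deferred to the cited monograph of Gr\"ochenig---and your outline is a faithful reconstruction of exactly those standard arguments (change of window via the pointwise convolution estimate and the weighted Young inequality $L^p_m * L^1_v\hookrightarrow L^p_m$, completeness via the reproducing identity, duality via Hahn--Banach on the closed range of $V_{\phi_0}$ in $L^p_m(\Rdd)$, covariance for (e), Moyal's identity for (f), and truncation plus synthesis for (g)). Your remark that the completeness of $M^1_v(\Rd)$ must be established first, before the ambient space $(M^1_v(\Rd))'$ in which the other $M^p_m(\Rd)$ live is even well defined, is precisely the right technical point to single out, and I see no gaps in the proposal.
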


\begin{remark}
\begin{enumerate}[(a)]
\item Assume that $p<\infty$. If $\phi \in L^2(\Rd)\cap M^{p'}_{1/m}(\Rd)$ and $\psi\in M^p_m(\Rd)\cap L^2(\Rd)$, then Moyal's identity and \eqref{eq:duality} implies that $\inner{\phi}{\psi}_{M^{p'}_{1/m},M^p_m}=\inner{\phi}{\psi}_{L^2}$. We will use this fact several times in the rest of the paper.  
	\item We defined modulation spaces as subspaces of $(M^1_v(\Rd))'=M^\infty_{1/v}(\Rd)$. If one restricts to weights $v$ of at most polynomial growth, then $M^1_v(\Rd)$ contains the Schwartz functions $\mathcal{S}(\Rd)$ and $M^\infty_{1/v}(\Rd)$ is a subspace of the tempered distributions $\mathcal{S}'(\Rd)$ \cite{Grochenig:2007w}.
	\item If $m$ is $v$-moderate, then so is $1/m$ since we assume that $v$ is symmetric: for $w_1,w_2\in \Rdd$ we find by choosing $z_1=w_1+w_2$ and $z_2=-w_2$ in \eqref{eq:vmoderate} that
	$
  m(w_1)\lesssim m(w_1+w_2)v(w_2),
$
hence
\begin{equation*}
  \frac{1}{m(w_1+w_2)} \lesssim \frac{1}{m(w_1)}v(w_2). 
\end{equation*}
The class of modulation spaces is therefore closed under duality for $p<\infty$. 
\end{enumerate}
	
\end{remark}

 \subsubsection{Wiener amalgam spaces and sampling estimates}
 Some close relatives of the modulation spaces are the \textit{Wiener amalgam spaces}. For our purposes, these spaces are interesting because they are associated with certain sampling estimates. We first define, for $1\leq p < \infty$, any lattice $\Lambda$ and weight function $m$,  the weighted sequence spaces
 $$\ell^p_{m}(\Lambda)=\left\{\{c_\lambda\}_{\lambda \in \Lambda}\subset \C : \|c\|^p_{\ell^p_{m}}:= \sum_{\lambda \in \Lambda} |c_\lambda|^pm(\lambda)^p<\infty\right\},$$ 
 and $\ell^\infty_{m}(\Lambda)$ is defined by replacing the sum by a supremum in the usual way. 
 
 Given any function $f:\Rdd\to \C$ we define a sequence $\{a_{(k,l)}\}_{(k,l)\in \mathbb{Z}^{2d}}$ by 
 \begin{equation*}
  a_{(k,l)}=\sup_{x,\omega\in [0,1]^d} |f(x+k,\omega+l)|;
\end{equation*}
the Wiener amalgam space $W(L^p_m)$ on $\Rdd$ is then the Banach space of $f:\Rdd\to \C$ such that
\begin{equation*}
  \|f\|_{W(L^p_m)}:= \|\{a_{(k,l)} \}\|_{\ell^p_m(\Z^{2d})}<\infty.
\end{equation*}
 The following is Proposition 11.1.4 in \cite{Grochenig:2001}. 
 \begin{lemma} \label{lem:wienersampling}
	Let $\Lambda$ be a lattice in $\Rdd$, and assume that $f\in W(L^p_m)$ is continuous. Then
	\begin{equation*}
		\|f \vert_{\Lambda}\|_{\ell^p_{m}} \lesssim \|f\|_{W(L^p_m)},
	\end{equation*}
	where the implicit constant may be chosen to be independent of $p$ and $m$.
	Since $M^1(\Rdd)\hookrightarrow W(L^1_m)$ for $m\equiv 1$, it follows that $\|f \vert_{\Lambda}\|_{\ell^1} \lesssim \|f\|_{M^1}$ for $f\in M^1(\Rdd)$. 
\end{lemma}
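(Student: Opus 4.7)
The plan is to exploit the fact that the Wiener amalgam norm $\|f\|_{W(L^p_m)}$ controls the local suprema $a_{(k,l)}$ of $f$ on the unit cubes $Q_{(k,l)}:=[k_1,k_1+1]\times\cdots\times[k_d,k_d+1]\times[l_1,l_1+1]\times\cdots\times[l_d,l_d+1]$ that tile $\Rdd$, combined with uniform discreteness of any full-rank lattice.

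First, since $\Lambda=A\Z^{2d}$ is a full-rank lattice, it is uniformly discrete, and there is a constant $N=N(\Lambda)\in\N$ (independent of $(k,l)$) such that $\#(\Lambda\cap Q_{(k,l)})\le N$ for every $(k,l)\in\Z^{2d}$. Next, I would handle the weight: because $m$ is $v$-moderate and $\lambda-(k,l)\in[0,1]^{2d}$ whenever $\lambda\in Q_{(k,l)}$, continuity of $v$ gives
\begin{equation*}
  m(\lambda)\le C_v\,m(k,l), \qquad C_v:=\sup_{z\in[0,1]^{2d}}v(z)<\infty,
\end{equation*}
and $C_v$ depends only on $v$, not on $m$ or $p$.

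Then I would group the lattice points by the cube containing them. Continuity of $f$ and the definition of $a_{(k,l)}$ yield $|f(\lambda)|\le a_{(k,l)}$ for all $\lambda\in\Lambda\cap Q_{(k,l)}$, so
\begin{equation*}
  \sum_{\lambda\in\Lambda}|f(\lambda)|^p m(\lambda)^p
  \le \sum_{(k,l)\in\Z^{2d}}\#(\Lambda\cap Q_{(k,l)})\,a_{(k,l)}^p\,\bigl(C_v\,m(k,l)\bigr)^p
  \le N\,C_v^p\,\|f\|_{W(L^p_m)}^p.
\end{equation*}
Taking $p$-th roots and using $N^{1/p}\le\max(1,N)$ gives $\|f|_\Lambda\|_{\ell^p_m}\le \max(1,N)\,C_v\,\|f\|_{W(L^p_m)}$, and the implicit constant depends only on $\Lambda$ and $v$, hence is independent of $p$ and of the choice of $v$-moderate $m$, as claimed. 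For the obvious extension that $f\in W(L^p_m)$ need not be a priori continuous on $\Lambda$, one uses that $W(L^p_m)$-functions admit a canonical continuous representative when $f\in M^1(\Rdd)$; but the lemma is stated for continuous $f$, so this subtlety does not arise.

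For the final assertion, I would invoke the standard embedding $M^1(\Rdd)\hookrightarrow W(L^1)(\Rdd)$ (cf.\ Gr\"ochenig's book), which, combined with the sampling estimate for $p=1$ and $m\equiv 1$, immediately yields $\|f|_\Lambda\|_{\ell^1}\lesssim\|f\|_{M^1}$ for every $f\in M^1(\Rdd)$. The proof is essentially bookkeeping — the main point requiring attention is keeping the constant free of both $p$ and $m$, which is achieved by isolating all $p$-dependence in the harmless factor $N^{1/p}$ and all weight dependence in $C_v$.
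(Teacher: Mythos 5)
Your argument is correct and is precisely the standard proof of the cited result: the paper itself gives no proof of this lemma, referring instead to Proposition 11.1.4 of Gr\"ochenig's book, and your covering-by-unit-cubes argument (uniform bound $N$ on lattice points per cube, $|f(\lambda)|\le a_{(k,l)}$ by continuity, and $m(\lambda)\lesssim m(k,l)$ via $v$-moderateness with the constant absorbed into $\sup_{[0,1]^{2d}}v$) is exactly how that proposition is established. The only things worth tidying are the case $p=\infty$, which the lemma is later used for and which follows from the same grouping with the sum replaced by a supremum, and the observation that the claimed $m$-independence of the constant tacitly uses a moderateness constant that is uniform over the class of $v$-moderate weights considered.
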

 
   By combining \cite[Lem. 4.1]{Cordero:2003} with Lemma \ref{lem:wienersampling}, one obtains the following result.
 \begin{lemma} \label{lem:lpstft}
	Let $\Lambda$ be a lattice, $\phi \in M^1_{v}(\Rd)$ and $\psi \in M^{p}_{m}(\Rd)$ where $p\in [1,\infty]$. Then
\begin{equation*}
  \|V_\phi \psi\vert_{\Lambda}\|_{\ell^{p}_{m}(\Lambda)}\lesssim \|\phi\|_{M^1_{v}} \|\psi\|_{M^{p}_{m}},
\end{equation*}
where the implicit constant may be chosen to be independent of $p$ and $m$.
\end{lemma}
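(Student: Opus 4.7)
The plan is to follow the recipe indicated right before the statement: reduce the sampling estimate to a Wiener amalgam norm bound on the STFT and then apply Lemma \ref{lem:wienersampling}. The argument thus splits into two steps, each of which calls an external result.

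First, I would invoke \cite[Lem.\ 4.1]{Cordero:2003}, which gives the amalgam estimate
\[
  \|V_\phi\psi\|_{W(L^p_m)}\lesssim \|\phi\|_{M^1_v}\,\|\psi\|_{M^p_m},
\]
with an implicit constant that is uniform both in $p\in[1,\infty]$ and in the $v$-moderate weight $m$. This estimate ultimately rests on the pointwise domination $|V_\phi\psi|\le C\,|V_{\phi_0}\psi|\ast|V_{\phi_0}\phi|$ together with the convolution relation $W(L^1_v)\ast W(L^p_m)\hookrightarrow W(L^p_m)$, whose operator norm depends only on $v$ and the $v$-moderation constant of $m$, not on the exponent $p$. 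Second, I would observe that $V_\phi\psi$ is a continuous function on $\Rdd$: this follows from the definition $V_\phi\psi(z)=\inner{\psi}{\pi(z)\phi}_{(M^1_v)',M^1_v}$ together with the strong continuity of $z\mapsto \pi(z)\phi$ in $M^1_v(\Rd)$, so that sampling on the lattice $\Lambda$ is well defined pointwise. Lemma \ref{lem:wienersampling} then applies to $f=V_\phi\psi$ and gives
\[
  \|V_\phi\psi\vert_\Lambda\|_{\ell^p_m(\Lambda)}\lesssim \|V_\phi\psi\|_{W(L^p_m)},
\]
again with a constant independent of $p$ and $m$. Chaining the two estimates yields the claim with the announced uniformity.

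There is no conceptual obstacle beyond bookkeeping: the main point to be careful about is the uniformity of the constants in $p$ and $m$ at each stage. For Lemma \ref{lem:wienersampling} this is transparent from its proof by a fixed $[0,1]^{2d}$-cover of $\Rdd$. For \cite[Lem.\ 4.1]{Cordero:2003} it stems from the fact that the underlying convolution inequality on Wiener amalgam spaces has a constant governed only by $v$ and the moderation of $m$. Once these uniformities are noted, the two inequalities compose without loss.
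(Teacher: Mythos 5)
Your proposal is correct and follows exactly the route the paper takes: the paper derives this lemma in one line by combining the amalgam estimate of \cite[Lem.\ 4.1]{Cordero:2003} with the sampling estimate of Lemma \ref{lem:wienersampling}, which is precisely your two-step argument. Your additional remarks on the continuity of $V_\phi\psi$ and on the uniformity of the constants in $p$ and $m$ fill in details the paper leaves implicit, but the underlying proof is the same.
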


\subsubsection{The symplectic Fourier transform}
As the Fourier transform of functions $f$ on $\Rdd$, we will use the \textit{symplectic} Fourier transform $\F_{\sigma} f$, given by 
\begin{equation*}
\F_{\sigma} f(z)=\int_{\R^{2d}} f(z') e^{-2 \pi i \sigma(z,z')} \ dz' \quad \text{ for } f\in L^1(\Rdd),z\in \R^{2d},
\end{equation*}
 where $\sigma$ is the standard symplectic form $\sigma((x_1,\omega_1),(x_2, \omega_2))=\omega_1\cdot x_2-\omega_2 \cdot x_1$. Then $\F_\sigma$ is an isomorphism on $M^{1}(\Rdd)$, and extends to a unitary operator on $L^2(\Rdd)$ and an isomorphism on $M^\infty(\Rdd)$ \cite[Lem. 7.6.2]{Feichtinger:1998}.

\subsection{Trace class and Hilbert-Schmidt operators} \label{sec:tchs}
By the singular value decomposition, see Chapter 3.2 of \cite{Busch:2016}, any compact operator $S$ on $L^2(\Rd)$ may be written as 
	\begin{equation*} 
  S=\sum_{n=1}^{N_0} s_n \xi_n \otimes \varphi_n
\end{equation*}
for some $N_0\in \mathbb{N}\cup \{\infty\}$, two orthonormal systems $\left\{ \xi_n \right\}_{n=1}^{N_0},$ $\left\{ \varphi_n \right\}_{n=1}^{N_0}$ in $L^2(\Rd)$ and a sequence of positive numbers $\left\{ s_n \right\}_{n=1}^{N_0}\in \ell^\infty$ called the \textit{singular values} of $S$. Here $\xi\otimes \varphi$ denotes the rank-one operator $\xi\otimes \varphi(\psi)=\langle \psi,\varphi \rangle_{L^2} \xi$ for $\varphi,\xi,\psi \in L^2(\mathbb{R}^d)$. We assume that $s_{n+1}\geq s_n$ for $n\in \N$.

Imposing summability conditions on the singular values of $S$ allows us to define two important classes of operators. The \textit{trace class operators} $\mathcal{T}$ are the operators $S$ whose singular values satisfy $\{s_n\}_{n=1}^{N_0}\in \ell^1$. The norm $\|S\|_{\tco}=\|\{s_n\}\|_{\ell^1}$ makes $\tco$ into a Banach space \cite{Busch:2016}. We may define a bounded linear functional on $\tco$ called the \textit{trace} by 
\begin{equation*}
  \tr(S):=\sum_{n\in \N} \langle S\eta_n,\eta_n \rangle,
\end{equation*}
where $\{\eta_n\}_{n\in \N}$ is an orthonormal basis for $L^2(\Rd)$ -- the value of $\tr(S)$ can be shown to be independent of the orthonormal basis used in its definition \cite{Busch:2016}. We also mention that the norm on $\tco$ may be expressed by $\|S\|_\tco=\tr(|S|)$.

The \textit{Hilbert-Schmidt operators} $\HS$ are the operators $S$ where $\{s_n\}_{n=1}^{N_0}\in \ell^2$. The norm on $\HS$ can be expressed as the $\ell^2$ norm of the singular values, but it will be more useful to note that $ST\in \tco$ for any $S,T\in \HS$ and that $\HS$ becomes a Hilbert space with respect to the inner product \cite{Busch:2016}
\begin{equation*}
  \langle S,T \rangle_{\HS}:=\tr(ST^*).
\end{equation*} 
Another description of $\HS$ is obtained by noting that it is isomorphic to the Hilbert space tensor product $L^2(\Rd) \otimes L^2(\Rd)$, where the isomorphism is obtained by associating rank-one operators $\psi\otimes \varphi \in \HS$ with elementary tensors $\psi\otimes \varphi$ \cite[Appendix 3]{Folland:2016}.

\subsection{Pseudodifferential operators} 
We will consider different ways to associate functions on $\Rdd$ with operators $M^1(\Rd)\to M^\infty(\Rd)$.
\subsubsection{Integral operators}
 For $\kernel\in L^2(\Rdd)$, we define a necessarily bounded integral operator $S:L^2(\Rd)\to L^2(\Rd)$ by 
 \begin{equation}\label{eq:hilbertschmidt}
  S\psi(x) = \int_{\Rd} \kernel(x,y) \psi(y) \ dy \quad \text{ for } \psi \in L^2(\Rd).
\end{equation}
Here $\kernel=\kernel_S$ is the \textit{kernel} of $S$, and one can extend the definition above to $\kernel\in M^\infty(\Rdd)$ by defining $S:M^1(\Rd)\to M^\infty(\Rd)$ by duality:
\begin{equation*} 
  \inner{S\psi}{\phi}_{M^\infty,M^1}=\inner{\kernel}{\phi \otimes \overline{\psi}}_{M^\infty,M^1} \quad \text{ for } \phi,\psi \in M^1(\Rd),
\end{equation*}
where $\phi \otimes \overline{\psi}(x,y)=\phi(x) \overline{\psi(y)}.$ By the kernel theorem for modulation spaces \cite[Thm. 14.4.1]{Grochenig:2001}, any continuous linear operator $S:M^1(\Rd)\to M^\infty(\Rd)$ is induced by a unique kernel $\kernel=\kernel_S\in M^\infty(\Rdd)$ in this way. Writing operators using a kernel $\kernel$ will be particularly useful for us because 
\begin{equation} \label{eq:kernelrankone}
  \kernel_{\phi \otimes \psi}=\phi\otimes \overline{\psi}\quad \text{ for } \psi,\phi \in L^2(\Rd),
\end{equation}
where $\phi \otimes \psi$ on the left side denotes the rank-one operator $\phi\otimes \psi(\xi)=\inner{\xi}{\psi}_{L^2}\phi$, and on the right side the \textit{function} $\phi\otimes \overline{\psi}(x,y)=\phi(x)\overline{\psi}(y)$. The Hilbert-Schmidt operators are precisely those operators  $S:M^1(\Rd)\to M^\infty(\Rd)$ such that $\kernel_S\in L^2(\Rdd)$.
\subsubsection{The Weyl calculus}
For $\xi,\eta \in L^2(\Rd)$, the \textit{cross-Wigner distribution} $W(\xi,\eta)$ is given by
\begin{equation*}
  W(\xi,\eta)(x,\omega)=\int_{\R^d} \xi\left(x+\frac{t}{2}\right)\overline{\eta\left(x-\frac{t}{2}\right)} e^{-2 \pi i \omega \cdot t} \ dt \quad \text{ for } (x,\omega)\in \Rdd.
 \end{equation*}	
Using the cross-Wigner distribution we introduce the \textit{Weyl calculus}. For $f \in M^\infty(\R^{2d})$ and $\xi,\eta \in M^1(\R^d)$, we define the \textit{Weyl transform} $L_{f}$ of $f$ to be the operator $L_f:M^1(\Rd)\to M^\infty(\Rd)$ given by 
\begin{equation*}
  \inner{L_{f}\eta}{\xi}_{M^\infty,M^1}=\inner{f}{W(\xi,\eta)}_{M^\infty,M^1}.
\end{equation*}
$f$ is called the \textit{Weyl symbol} of the operator $L_{f}$. In general we will use $\weyl_S$ to denote the Weyl symbol of an operator $S$, in other words $L_{\weyl_S}=S$. By the kernel theorem for modulation spaces, the Weyl transform is a bijection from $M^\infty(\Rdd)$ to the continuous  linear operators $M^1(\Rd)\to M^\infty(\Rd)$. As above, $\HS$ has a simple description in terms of the Weyl symbol: $S\in \HS$ if and only if $\weyl_S\in L^2(\Rdd)$. 

\subsubsection{Translation of operators}

Several authors have considered the idea of translating operators by a point $z\in \Rdd$ by conjugation with $\pi(z)$ \cite{Kozek:1992, Feichtinger:1998, Werner:1984}: if $S:M^1(\Rd)\to M^\infty(\Rd)$ is a continuous operator, we define the translation of $S$ by $z\in \Rdd$ to be 
\begin{equation*}
  \alpha_z(S)=\pi(z)S\pi(z)^*.
\end{equation*}
This corresponds to a translation of the Weyl symbol \cite[Lem. 3.2]{Luef:2017},  
\begin{equation} \label{eq:translateweyl}
  \alpha_z(S)=L_{T_z(\weyl_S)},
\end{equation}
which is a major reason why the Weyl symbol is useful for us when considering Fourier series of operators in Section \ref{sec:janssen}. Since $\pi(z)$ is unitary, $\alpha$ also respects the product of two operators in the sense that
\begin{equation} \label{eq:translateproduct}
\alpha_z(ST)=\alpha_z(S)\alpha_z(T) \quad \text{ for } S,T \in \bo.
\end{equation}

It is easily shown that $\alpha_z$ is an isometry on $\tco,\HS$ and $\bo$ for any $z\in \Rdd$ and that applying $\alpha_z$ to a rank-one operator $\psi \otimes \phi$ amounts to a time-frequency shift of $\psi$ and $\phi:$
\begin{equation} \label{eq:shiftrankone}
  \alpha_z(\psi\otimes \phi)=(\pi(z)\psi)\otimes(\pi(z)\phi).
\end{equation}
Furthermore, the map $z\mapsto \alpha_z$ is a representation of the locally compact abelian group $\Rdd$ on the space of Hilbert-Schmidt operators. In fact, if we identify the Hilbert-Schmidt operators with the Hilbert space tensor product $L^2(\Rd) \otimes L^2(\Rd)$, then $\alpha$ is the tensor product representation $\pi\otimes \overline{\pi}$ of $\Rdd$ on $L^2(\Rd) \otimes L^2(\Rd)$, which is the notation for $\alpha$ used in \cite{Feichtinger:1998}.
\subsubsection{The Fourier-Wigner transform}
 For a trace class operator $S\in \tco$, the \emph{Fourier-Wigner transform} $\F_W(S)$ of $S$ is the function
	\begin{equation*}
	\F_W (S)(z)=e^{-\pi i x \cdot \omega}\tr(\pi(-z)S) \quad \text{ for } z=(x,\omega)\in \R^{2d}.
	\end{equation*}
	As a special case, if $\psi,\phi \in L^2(\Rd)$ we have \cite[Lem. 6.1]{Luef:2017} that
	\begin{equation} \label{eq:fwrankone}
  \F_W(\phi\otimes \psi)(z)=e^{\pi i x \cdot \omega} V_\psi \phi (z) \quad \text{ for } z=(x,\omega) \in \Rdd,
\end{equation}
and we also mention the easily verified relation 
\begin{equation} \label{eq:fwofadjoint}
  \F_W(S^*)(z)=\overline{\F_W(S)(-z)} \quad \text{ for } z=(x,\omega) \in \Rdd.
\end{equation}
	Werner \cite{Werner:1984} has shown that in many respects $\F_W$ behaves like a Fourier transform for operators, which is the interpretation we will often rely on. For instance, a Riemann-Lebesgue lemma holds: if $S\in \tco$, then $\F_W(S)\in C_0(\Rdd)$ and  
	\begin{equation} \label{eq:riemannlebesgue}
  \|\F_W(S)\|_{L^\infty}\leq \|S\|_{\tco}.
\end{equation}
		The Fourier-Wigner transform and Weyl transform are related by a symplectic Fourier transform:
	\begin{equation} \label{eq:fwsymp}
  \F_W(S)=\F_\sigma (\weyl_S),
\end{equation}
which can be used to show that $S\in \HS$ if and only if $\F_W(S)\in L^2(\Rdd)$.
Finally, we remark that $\F_W(S)$ differs only by a phase factor $e^{-\pi i x\cdot \omega}$ from the \textit{spreading function} of $S$ \cite{Bello:1963,Feichtinger:1998}.

\subsubsection{Localization operators} 
An important class of examples of pseudodifferential operators in this paper will be the \textit{localization operators}. Given $\varphi\in L^2(\Rd)$ and $h\in L^1(\Rdd)$, the localization operator $A_h^\varphi \in \bo$ is defined by 
\begin{equation*}
  A_h^\varphi \psi=\int_{\Rdd} h(z) V_{\varphi}\psi(z) \pi(z) \varphi \ dz\quad \text{ for } \psi \in L^2(\Rd),
\end{equation*}
where the integral is an absolutely convergent Bochner integral in $L^2(\Rd)$. Localization operators interact nicely with the various aspects of pseudodifferential operators considered above: their Weyl symbol is given by a convolution \cite{Boggiatto:2004}
\begin{equation*}
  \weyl_{A_h^\varphi}=h\ast W(\varphi,\varphi)
\end{equation*}
and they satisfy the translation covariance property \cite[Lem. 4.3 and theorem. 5.1]{Luef:2017}
\begin{equation} \label{eq:translatelocop}
  \alpha_z(A_h^\varphi)=A_{T_z h}^\varphi
\end{equation}

\subsection{Frames and g-frames}
We will briefly recall the basic definitions of frame theory in the Hilbert space $L^2(\Rd)$, referring the details to the monographs \cite{Grochenig:2001,Christensen:2016,Heil:2011}. 
Recall that a sequence $\{\xi_i\}_{i\in I}\subset L^2(\Rd)$ is a \textit{frame} for $L^2(\Rd)$ if there exist constants $A,B>0$ such that 
\begin{equation} \label{eq:framedef}
  A \|\psi\|_{L^2}^2\leq \sum_{i\in I} |\langle \psi,\xi_i \rangle_{L^2}| \leq B \|\psi\|_{L^2}^2  \quad \text{ for any } \psi \in L^2(\Rd).
\end{equation}
Here $A$ and $B$ are called the  lower and upper frame bound, respectively.
If \eqref{eq:framedef} holds with $A=B$, we say that $\{\xi_i\}_{i\in I}$ is a \textit{tight} frame, and if $A=B=1$ we call $\{\xi_i\}_{i\in I}$ a \textit{Parseval frame}. Whenever the rightmost inequality in \eqref{eq:framedef} holds for some $B>0$, $\{\xi_i\}_{i\in I}$ is a \textit{Bessel system}.

When $\{\xi_i\}_{i\in I}$ is a Bessel system, we associated with $\{\xi_i\}_{i\in I}$ several bounded operators: the \textit{analysis operator} $C:L^2(\Rd)\to \ell^2(I)$ given by
\begin{equation*}
  C\psi=\{\langle \psi,\xi_i \rangle_{L^2}\}_{i\in I} \quad \text{ for } \psi \in L^2(\Rd),
\end{equation*}
the \textit{synthesis operator} $D:\ell^2(I)\to L^2(\Rd)$ given by
\begin{equation*}
  D\{c_i\}_{i\in I}=\sum_{i\in I} c_i \xi_i \quad \text{ for } \{c_i\}_{i\in I} \in \ell^2(I)
\end{equation*}
and the \textit{frame operator} $\frameop=DC\in \bo$ defined by
\begin{equation*} 
  \frameop(\psi)=\sum_{i\in I} \langle \psi,\xi_i \rangle_{L^2} \xi_i \quad \text{ for } \psi \in L^2(\Rd).
\end{equation*}

In the introduction, see equation \eqref{eq:intro:gaborframeop}, we introduced a special class of frames called \textit{Gabor frames}, which are frames of the form $\{\pi(\lambda)\varphi\}_{\lambda \in \Lambda}$ for some lattice $\Lambda$ and $\varphi\in L^2(\Rd)$. More generally, a \textit{multi-window Gabor frame }  \cite{Zibulski:1997} is a frame of the form $\{\pi(\lambda)\varphi_n\}_{\lambda \in \Lambda,n=1,...N}$ where $\varphi_n\in L^2(\Rd)$ for $n=1,...,N.$ We call the set $\{\pi(\lambda)\varphi_n\}_{\lambda \in \Lambda,n=1,...N}$ the \textit{multi-window Gabor system generated by } $\{\varphi_n\}_{n=1}^N$, even when $\{\pi(\lambda)\varphi_n\}_{\lambda \in \Lambda,n=1,...N}$ is not a frame.

\subsubsection{g-frames} 

In \cite{Sun:2006}, Sun introduced \textit{g-frames} as a generalization of frames for Hilbert spaces. We state a special case\footnote{More generally, we could consider $A_i\in \mathcal{L}(\mathcal{H},V_i)$ where $\mathcal{H}$ is a Hilbert space and $V_i$ is a closed subspace of another Hilbert space $\mathcal{H}'$, see \cite{Sun:2006}.} for the Hilbert space $L^2(\Rd)$. A sequence $\{A_i\}_{i\in I}\subset \bo$ is a g-frame for $L^2(\Rd)$ with respect to $L^2(\Rd)$ if there exist positive constants $A,B$ such that
\begin{equation*}
  A \|\psi\|_{L^2}^2\leq \sum_{i\in I} \|A_i\psi\|_{L^2}^2 \leq B \|\psi\|_{L^2}^2  \quad \text{ for any } \psi \in L^2(\Rd).
\end{equation*}
If we can choose $A=B$, we say that the g-frame is \textit{tight}. When the above inequality holds, the g-frame operator $\frameop$ defined by
\begin{equation*}
  \frameop\psi = \sum_{i\in \N} A_i^* A_i \psi
\end{equation*}
is positive, bounded and invertible on $L^2(\Rd)$ with $A\leq \|\frameop\|_{\bo}\leq B$. 

\section{The space $\beauty_{v\otimes v}$ of operators with kernel in $M^1_{v\otimes v}$}  \label{sec:operators}
To define a suitable class of operators for our purposes, we will consider modulation spaces on $\Rdd$. The short-time Fourier transform on phase space $\Rdd$ is $$\mathcal{V}_g f(z,\zeta)=\inner{f}{\pi(z)\otimes \pi(\zeta)g}_{L^2} \quad \text{ for } z,\zeta \in \Rdd\text{ and } f,g \in L^2(\Rdd),$$ where $\pi(z)\otimes \pi(\zeta)$ is defined by $$\pi(z)\otimes \pi(\zeta)g=M_{(z_\omega,\zeta_\omega)} T_{(z_x,\zeta_x)}g\quad \text{ for }z=(z_x,z_\omega),\zeta=(\zeta_x,\zeta_\omega).$$  Given a submultiplicative, symmetric GRS-weight $v$ on $\Rdd$, we consider the Banach space $M^1_{v\otimes v}(\Rdd)$ of $f \in L^2(\Rdd)$ such that
\begin{equation*}
  \|f\|_{M^1_{v\otimes v}} = \int_{\R^{2d}}\int_{\R^{2d}} |\mathcal{V}_{\phi_0\otimes \phi_0} f (z,\zeta)| v(z) v(\zeta)  \ dz d\zeta<\infty, 
\end{equation*}
where $\phi_0\otimes \phi_0(x,y)=\phi_0(x)\phi_0(y).$  With these definitions it is easy to show that if $\phi,\psi \in M^1_v(\Rd)$, then $\phi\otimes \psi \in M^1_{v\otimes v}(\Rdd)$ with 

 \begin{equation} \label{eq:rankoneprojtensor}
  \|\psi \otimes \phi\|_{M^1_{v\otimes v}} = \|\psi\|_{M^1_v}\|\phi\|_{M^1_v}.
\end{equation}
In fact, $M^1_{v\otimes v}(\Rdd)$ is isomorphic to $M^1_v(\Rd)\hat{\otimes} M^1_v(\Rd)$ \cite[Thm. 5]{Balazs:2019}, where $\hat{\otimes}$ denotes the projective tensor product of Banach spaces. This tensor product construction is covered in detail in \cite{Ryan:2002}, but for our purposes it suffices to note that 
\begin{equation} \label{eq:projtensor1}
  \begin{split}
  M^1_{v\otimes v}(\Rdd)&=M^1_v(\Rd)\hat{\otimes} M^1_v(\Rd)\\
  &=\left\{ \sum_{n\in \N} \phi_n^{(1)} \otimes \phi_n^{(2)}:\sum_{n\in \N} \|\phi_n^{(1)}\|_{M^1_v}\|\phi_n^{(2)}\|_{M^1_v}<\infty  \right\},
  \end{split}
\end{equation} 
with an equivalent norm for $M^1_{v\otimes v}(\Rdd)$ given by 
\begin{equation} \label{eq:projtensor2}
  \|f\|_{M^1_{v\otimes v}}\asymp \inf \left\{ \sum_{n\in \N} \|\phi_n^{(1)}\|_{M^1_v}\|\phi_n^{(2)}\|_{M^1_v} \right\},
\end{equation}
 where the infimum is taken over all sequences $\{\phi_n^{(1)}\}_{n\in \N},\{\phi_n^{(2)}\}_{n\in \N}$ in $M^1_v(\Rd)$ such that $f=\sum_{n\in \N} \phi_n^{(1)} \otimes \phi_n^{(2)}$ and $\sum_{n\in \N} \|\phi_n^{(1)}\|_{M^1_v}\|\phi_n^{(2)}\|_{M^1_v}<\infty $. 

We will be particularly interested in the class of operators $S$ whose kernel $\kernel_S$  belongs to $M^1_{v\otimes v}(\Rdd)$, as studied by several authors \cite{Feichtinger:2018,Feichtinger:1998,Kozek:2005} for $v\equiv 1$. 
 We denote the class of such operators by $\beauty_{v \otimes v}$, and define the norm
\begin{equation*}
  \|S\|_{\beauty_{v\otimes v}}=\|\kernel_S\|_{M^1_{v\otimes v}}.
\end{equation*}
 Since $M^1_{v\otimes v}(\Rdd)\hookrightarrow L^2(\Rdd)$, operators in $\beauty_{v\otimes v}$ define bounded operators on $L^2(\Rd)$ by \eqref{eq:hilbertschmidt}. In fact (see \cite{Grochenig:1996,Grochenig:1999}) we have $\beauty_{v\otimes v}\hookrightarrow \tco\hookrightarrow \bo$, hence
	\begin{equation*}
 \|S\|_{\bo}\leq  \|S\|_{\tco} \lesssim \|S\|_{\beauty_{v\otimes v}} \quad \text{ for } S\in \beauty_{v\otimes v}.
\end{equation*}
Now recall from \eqref{eq:kernelrankone} that the kernel of a rank-one operator $\phi \otimes \psi$ with $\psi,\phi \in M^1_v(\Rd)$  is the function $\phi \otimes \overline{\psi}$.
By \eqref{eq:rankoneprojtensor} we get that 
\begin{equation*} 
  \|\phi\otimes \psi\|_{\beauty_{v\otimes v}}=\|\phi\|_{M^1_v}\|\psi\|_{M^1_v}
\end{equation*}
(we have also used that $\|\overline{\psi}\|_{M^1_v}=\|\psi\|_{M^1_v}$ as $v$ is symmetric).
Equation \eqref{eq:projtensor1} therefore has the following important consequences.  
\begin{proposition}\label{prop:innerkernel}
	Let $S\in \beauty_{v\otimes v}$. 
	
	\begin{enumerate}[(a)]
		\item There exist sequences $\{\phi_n^{(1)}\}_{n\in \N},\{\phi_n^{(2)}\}_{n\in \N}\subset M^1_{v}(\Rd)$ with $$\sum_{n\in \N} \|\phi_n^{(1)}\|_{M^1_{v}} \|\phi_n^{(2)}\|_{M^1_{v}}<\infty$$ such that $S$ can be written as a sum of rank-one operators
	\begin{equation} \label{eq:sumrankone}
  S=\sum_{n\in \N} \phi_n^{(1)}\otimes \phi_n^{(2)}.
\end{equation} 
The decomposition \eqref{eq:sumrankone} converges absolutely in $\beauty_{v\otimes v}$, hence in $\tco$ and $\bo$.
		\item 
			\begin{equation*}
  				\|S\|_{\beauty_{v\otimes v}}\asymp \inf \left\{ \sum_{n\in \N} \|\phi_n^{(1)}\|_{M^1_v}\|\phi_n^{(2)}\|_{M^1_v} \right\},
			\end{equation*}
 with infimum taken over all sequences $\{\phi_n^{(1)}\}_{n\in \N},\{\phi_n^{(2)}\}_{n\in \N}$ as in (a).
 		\item Let $S^*$ denote the Hilbert space adjoint of $S$ when $S$ is viewed as an operator $L^2(\Rd)\to L^2(\Rd)$. Then $S^*\in \beauty_{v\otimes v}$ and $S$ extends to a weak*-to-weak*-continuous operator $S:M^\infty_{1/v}(\Rd)\to M^\infty_{1/v}(\Rd)$ by defining 
 		\begin{equation*} 
				\inner{S\phi}{\psi}_{M^\infty_{1/v},M^1_v}=\inner{\phi}{S^*\psi}_{M^\infty_{1/v},M^1_v} \quad \text{ for } \phi\in M^\infty_{1/v}(\Rd),\psi \in M^1_v(\Rd).
							\end{equation*}
				The decomposition in \eqref{eq:sumrankone} still holds for this extensions of $S$, meaning that
				\begin{equation*}
  						S\psi=\sum_{n\in \N} \inner{\psi}{\phi_n^{(2)}}_{M^\infty_{1/v},M^1_{v}} \phi_n^{(1)} \quad \text{ for } \psi\in M^\infty_{1/v}(\Rd)
        \end{equation*}
        with absolute convergence of the sum in the norm of $M^1_v(\Rdd)$.
		\item The extension of $S$ to $M^\infty_{1/v}(\Rd)$ is bounded from $M^\infty_{1/v}(\Rd)$ into $M^1_v(\Rd)$, and maps weak*-convergent sequences in $M^\infty_{1/v}(\Rd)$ to norm-convergent sequences in $M^1_v(\Rd)$. 
	\end{enumerate}

\end{proposition}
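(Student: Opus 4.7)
The plan is to exploit the projective tensor product identification $M^1_{v\otimes v}(\Rdd)\cong M^1_v(\Rd)\hat{\otimes}M^1_v(\Rd)$ stated in \eqref{eq:projtensor1}--\eqref{eq:projtensor2}, together with the rank-one kernel identity \eqref{eq:kernelrankone} and the norm equality \eqref{eq:rankoneprojtensor}, to freely convert between $M^1_{v\otimes v}$-decompositions of kernels and rank-one decompositions of operators. This dictionary reduces (a) and (b) to bookkeeping and makes the extension in (c), (d) natural.

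For (a) and (b), I start with $S\in\beauty_{v\otimes v}$ and apply \eqref{eq:projtensor1} to $\kernel_S$ to get a decomposition $\kernel_S=\sum_n f_n\otimes g_n$ with $\sum_n\|f_n\|_{M^1_v}\|g_n\|_{M^1_v}<\infty$. Setting $\phi_n^{(1)}:=f_n$ and $\phi_n^{(2)}:=\overline{g_n}$, which preserves $M^1_v$-norms by symmetry of $v$, \eqref{eq:kernelrankone} rewrites $\kernel_S=\sum_n\kernel_{\phi_n^{(1)}\otimes\phi_n^{(2)}}$. Absolute convergence in $\beauty_{v\otimes v}$ is then equivalent to the summability assumption via \eqref{eq:rankoneprojtensor}, and the continuous embeddings $\beauty_{v\otimes v}\hookrightarrow\tco\hookrightarrow\bo$ transport absolute convergence to those norms. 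The norm equivalence in (b) is a direct restatement of \eqref{eq:projtensor2}.

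For (c), the identity $\kernel_{S^*}(x,y)=\overline{\kernel_S(y,x)}$, combined with the fact that complex conjugation and swapping coordinates are isometries of $M^1_{v\otimes v}(\Rdd)$ (again by symmetry of $v$), yields $S^*\in\beauty_{v\otimes v}$. Applying (a) to $S^*$ gives an absolutely convergent representation $S^*=\sum_n\phi_n^{(2)}\otimes\phi_n^{(1)}$. For $\psi\in M^\infty_{1/v}(\Rd)$ I define the extension by
\begin{equation*}
  S\psi:=\sum_{n\in\N}\inner{\psi}{\phi_n^{(2)}}_{M^\infty_{1/v},M^1_v}\phi_n^{(1)},
\end{equation*}
which converges absolutely in $M^1_v(\Rd)$ because $|\inner{\psi}{\phi_n^{(2)}}|\leq\|\psi\|_{M^\infty_{1/v}}\|\phi_n^{(2)}\|_{M^1_v}$ and the series from (a) is summable. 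A termwise computation, using how rank-one operators act, gives the adjoint identity $\inner{S\phi}{\psi}_{M^\infty_{1/v},M^1_v}=\inner{\phi}{S^*\psi}_{M^\infty_{1/v},M^1_v}$, and weak*-to-weak* continuity of $S$ is then automatic, since $S^*$ preserves $M^1_v(\Rd)$ by the same argument applied in reverse.

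For (d), boundedness $S\colon M^\infty_{1/v}\to M^1_v$ is immediate from the same absolutely convergent series, with operator norm at most $\sum_n\|\phi_n^{(1)}\|_{M^1_v}\|\phi_n^{(2)}\|_{M^1_v}$. The step I expect to require the most care is upgrading weak*-to-weak* continuity to weak*-to-norm continuity on sequences. Given a weak* convergent sequence $\psi_k\to\psi$ in $M^\infty_{1/v}(\Rd)$, the Banach-Steinhaus theorem supplies a uniform bound $M:=\sup_k\|\psi_k\|_{M^\infty_{1/v}}<\infty$. Then
\begin{equation*}
  \|S\psi_k-S\psi\|_{M^1_v}\leq\sum_{n\in\N}|\inner{\psi_k-\psi}{\phi_n^{(2)}}|\,\|\phi_n^{(1)}\|_{M^1_v},
\end{equation*}
where each summand tends to zero (as $\phi_n^{(2)}\in M^1_v$ tests weak* convergence) and is dominated by $(M+\|\psi\|_{M^\infty_{1/v}})\|\phi_n^{(2)}\|_{M^1_v}\|\phi_n^{(1)}\|_{M^1_v}$, a summable majorant by (a). Dominated convergence for series then yields norm convergence in $M^1_v(\Rd)$.
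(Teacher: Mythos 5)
Your proof is correct and follows essentially the same route as the paper: the projective tensor product identification \eqref{eq:projtensor1}--\eqref{eq:projtensor2} together with \eqref{eq:kernelrankone} and \eqref{eq:rankoneprojtensor} for (a) and (b), the kernel identity $\kernel_{S^*}(x,y)=\overline{\kernel_S(y,x)}$ and the adjoint relation for (c), and dominated convergence with a Banach--Steinhaus bound for (d). The only cosmetic difference is that in (c) you define the extension by the series and then verify the adjoint identity, whereas the paper defines it as the Banach space adjoint of $S^*$ and then derives the series; the two orders are interchangeable.
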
  
\begin{proof}
\begin{enumerate}[(a)]
	\item By \eqref{eq:projtensor1}, there exist $\{\phi_n^{(1)}\}_{n\in N},\{\phi_n^{(2)}\}_{n\in N}$ as in the statement with 
		\begin{equation*}
		  \kernel_S (x,y)=\sum_{n\in \N} \phi_n^{(1)}(x)\overline{\phi_n^{(2)}(y)} \quad \text{ for } x,y\in \Rd, 
		\end{equation*}
		with absolute convergence of the sum in the norm of $M^1_{v\otimes v}(\Rdd)$ by \eqref{eq:rankoneprojtensor}.
		 Since the function $\phi_n^{(1)}(x)\overline{\phi_n^{(2)}(y)}$ is the kernel  of the rank-one operator $\phi_n^{(1)}\otimes\phi_n^{(2)}$ by \eqref{eq:kernelrankone}, the decomposition of $\kernel_S$ above and the definition of $\|\cdot \|_{\beauty_{v\otimes v}}$ implies that 
		\begin{equation*}
		  S=\sum_{n\in \N} \phi_n^{(1)}\otimes \phi_n^{(2)},
		\end{equation*} 
		with absolute convergence in the norm of $\beauty_{v\otimes v}$. 
	\item Follows from \eqref{eq:projtensor2} and $\|S\|_{\beauty_{v\otimes v}}=\|\kernel_S\|_{M^1_{v\otimes v}}$.	
	\item It is well-known that the kernel of $S^*$ is $\kernel_{S^*}(x,y)=\overline{\kernel_S(y,x)}$. Since $M^1_{v\otimes v}(\Rdd)$ is closed under this operation -- as follows from \eqref{eq:projtensor1}, for instance -- we get $S^*\in \beauty_{v\otimes v}$. In particular, part (a) applied to $S^*$ implies that $S^*$ is bounded  $M^1_v(\Rd)\to M^1_v(\Rd)$. We may therefore define an extension $\tilde{S}:M^\infty_{1/v}(\Rd)\to M^\infty_{1/v}(\Rd)$ by defining $\tilde{S}$ to be the Banach space adjoint of $S^*$. By definition, this means that
	\begin{equation*} 
  \inner{\tilde{S}\phi}{\psi}_{M^\infty_{1/v},M^1_v}=\inner{\phi}{S^*\psi}_{M^\infty_{1/v},M^1_v}.
\end{equation*}
It is easy to see that $\tilde{S}$ is an extension of $S$: if $\phi\in L^2(\Rd)$, we find that
\begin{align*}
   \inner{\tilde{S}\phi}{\psi}_{M^\infty_{1/v},M^1_v}&= \inner{\phi}{S^*\psi}_{M^\infty_{1/v},M^1_v}  \\
  &=  \inner{\phi}{S^*\psi}_{L^2} \\
  &= \inner{S\phi}{\psi}_{L^2} \\
  &= \inner{S\phi}{\psi}_{M^\infty_{1/v},M^1_v}.
\end{align*}
From now on, we simply denote the extension $\tilde{S}$ by $S$. For the last part, note that $S^*$ has a decomposition
 $
  S^*=\sum_{n\in \N} \phi_n^{(2)}\otimes \phi_n^{(1)}
$
by part (a). By definition, for $\psi\in M^\infty_{1/v}(\Rd)$, we have
\begin{equation*}
  \inner{S\psi}{\phi}_{M^\infty_{1/v},M^1_v}= \inner{\psi}{S^*\phi}_{M^\infty_{1/v},M^1_v}.
\end{equation*}
By the decomposition above, $S^*\phi=\sum_{n=1}^\infty \inner{\phi}{\phi_n^{(1)}}_{L^2} \phi_n^{(2)}$, and as this sum converges absolutely in the norm of $M^1_v(\Rd)$ we find
\begin{align*}
  \inner{S\psi}{\phi}_{M^\infty_{1/v},M^1_v}&= \inner{\psi}{S^*\phi}_{M^\infty_{1/v},M^1_v} \\
  &= \inner{\psi}{\sum_{n=1}^\infty \inner{\phi}{\phi_n^{(1)}}_{L^2} \phi_n^{(2)}}_{M^\infty_{1/v},M^1_v} \\
  &= \sum_{n=1}^\infty \inner{\phi_n^{(1)}}{\phi}_{L^2} \inner{\psi}{\phi_n^{(2)}}_{M^\infty_{1/v},M^1_v} \\
  &= \inner{ \sum_{n=1}^\infty \inner{\psi}{\phi_n^{(2)}}_{M^\infty_{1/v},M^1_v}\phi_n^{(1)} }{\phi}_{M^\infty_{1/v},M^1_v}.
\end{align*}
The absolute convergence in the norm of $M^1_v(\Rdd)$ follows as 
\begin{equation*}
\sum_{n=1}^\infty \left|\inner{\psi}{\phi_n^{(2)}}_{M^\infty_{1/v},M^1_v}\right| \|\phi_n^{(1)}\|_{M^1_v} \\
  \leq \|\psi\|_{M^{\infty}_{1/v}} \sum_{n=1}^\infty  \|\phi_n^{(1)}\|_{M^1_v} \|\phi_n^{(2)}\|_{M^1_v}<\infty.
\end{equation*}
	\item The last inequality above also implies that $S$ is bounded from $M^\infty_{1/v}(\Rd)$ to $M^1_v(\Rd)$, since it shows that 
\begin{equation*}
  \|S\psi\|_{M^1_v} \leq \|\psi\|_{M^{\infty}_{1/v}} \sum_{n=1}^\infty  \|\phi_n^{(1)}\|_{M^1_v} \|\phi_n^{(2)}\|_{M^1_v}.
\end{equation*}
	Finally, let $\{\psi_i\}_{i\in \N}$ be a sequence in $M^\infty_{1/v}(\Rd)$ that converges to $\psi\in M^\infty_{1/v}(\Rd)$ in the weak* topology. Then
		\begin{equation*}
  			S\psi_i=\sum_{n\in \N} \inner{\psi_i}{\phi_n^{(2)}}_{M^\infty_{1/v},M^1_{v}} \phi_n^{(1)}\xrightarrow{i\to \infty} \sum_{n\in \N} \inner{\psi}{\phi_n^{(2)}}_{M^\infty_{1/v},M^1_{v}} \phi_n^{(1)}=S\psi.
		\end{equation*}
		We have used the dominated convergence theorem for Banach spaces \cite[Prop. 1.2.5]{Hytonen:2016} to take the limit inside the sum: as $\{\psi_i\}_{i\in \N}$ is weak*-convergent there exists $0<C<\infty$ such that $\|\psi_i\|_{M^\infty_{1/v}}\leq C$ for any $i$, so $$\left\|\inner{\psi_i}{\phi_n^{(2)}}_{M^\infty_{1/v},M^1_{v}}\phi_n^{(1)}\right\|_{M^1_v}\leq C\|\phi_n^{(2)}\|_{M^1_{v}}\|\phi_n^{(1)}\|_{M^1_{v}}$$
		for any $i$, and $\sum_{n\in \N}\|\phi_n^{(1)}\|_{M^1_{v}}\|\phi_n^{(2)}\|_{M^1_{v}}<\infty$. 

\end{enumerate}
\end{proof}
As a first consequence, we show that $\beauty_{v\otimes v}$ is closed under composition. The proof is similar to that of \cite[Cor. 3.11]{Feichtinger:2018}, where the result is proved for locally compact abelian groups with no weights.
\begin{corollary} \label{cor:composition}
	 $\beauty_{v\otimes v}$ is closed under composition: if $S,T\in \beauty_{v\otimes v}$, then $$\|ST\|_{\beauty_{v\otimes v}}\lesssim \|S\|_{\beauty_{v\otimes v}}\|T\|_{\beauty_{v\otimes v}}.$$
\end{corollary}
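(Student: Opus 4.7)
The strategy is to expand both operators as absolutely convergent sums of rank-one operators via Proposition \ref{prop:innerkernel}(a), multiply term-by-term, and then invoke the infimum characterization of the $\beauty_{v\otimes v}$-norm given in Proposition \ref{prop:innerkernel}(b). First I would fix decompositions $S=\sum_n \phi_n^{(1)}\otimes \phi_n^{(2)}$ and $T=\sum_m \psi_m^{(1)}\otimes \psi_m^{(2)}$ with all factors in $M^1_v(\Rd)$ and both series absolutely summable in the sense of \eqref{eq:projtensor1}. A direct computation with the rank-one composition identity $(\phi\otimes \psi)(\xi\otimes \eta)=\inner{\xi}{\psi}_{L^2}\,\phi\otimes\eta$ gives, at least formally,
\begin{equation*}
ST=\sum_{n,m}\inner{\psi_m^{(1)}}{\phi_n^{(2)}}_{L^2}\,\phi_n^{(1)}\otimes \psi_m^{(2)}.
\end{equation*}
Each summand is a rank-one operator in $\beauty_{v\otimes v}$, so this is a candidate decomposition to which Proposition \ref{prop:innerkernel}(b) applies.

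The key estimate is to bound the total $\beauty_{v\otimes v}$-weight of this double sum. Using the continuous embedding $M^1_v(\Rd)\hookrightarrow L^2(\Rd)$ (which follows from $v\geq 1$ together with $M^1\hookrightarrow L^2$), Cauchy--Schwarz gives
\begin{equation*}
|\inner{\psi_m^{(1)}}{\phi_n^{(2)}}_{L^2}|\leq \|\psi_m^{(1)}\|_{L^2}\|\phi_n^{(2)}\|_{L^2}\lesssim \|\psi_m^{(1)}\|_{M^1_v}\|\phi_n^{(2)}\|_{M^1_v},
\end{equation*}
so that the double sum factors into a product:
\begin{equation*}
\sum_{n,m}|\inner{\psi_m^{(1)}}{\phi_n^{(2)}}_{L^2}|\,\|\phi_n^{(1)}\|_{M^1_v}\|\psi_m^{(2)}\|_{M^1_v}\lesssim \Bigl(\sum_n \|\phi_n^{(1)}\|_{M^1_v}\|\phi_n^{(2)}\|_{M^1_v}\Bigr)\Bigl(\sum_m \|\psi_m^{(1)}\|_{M^1_v}\|\psi_m^{(2)}\|_{M^1_v}\Bigr).
\end{equation*}
Taking the infimum over all admissible decompositions of $S$ and $T$ and appealing once more to Proposition \ref{prop:innerkernel}(b) yields $\|ST\|_{\beauty_{v\otimes v}}\lesssim \|S\|_{\beauty_{v\otimes v}}\|T\|_{\beauty_{v\otimes v}}$.

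What remains is to verify that the double series genuinely represents $ST$, rather than being a formal object. The displayed bound shows the series is absolutely convergent in $\beauty_{v\otimes v}$, so it defines some $U\in \beauty_{v\otimes v}$, and the embedding $\beauty_{v\otimes v}\hookrightarrow \bo$ transports this convergence to the operator norm. On the other hand, the partial sums $S_N=\sum_{n\leq N}\phi_n^{(1)}\otimes\phi_n^{(2)}$ and $T_M=\sum_{m\leq M}\psi_m^{(1)}\otimes \psi_m^{(2)}$ converge in $\bo$ to $S$ and $T$ by Proposition \ref{prop:innerkernel}(a), so $S_NT_M\to ST$ in $\bo$; but $S_NT_M$ is precisely the $(N,M)$-partial sum of the double series, which converges to $U$. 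Hence $U=ST$. I expect the main obstacle to be the bookkeeping for the order of summation and the passage from formal to genuine convergence in $\beauty_{v\otimes v}$; no deeper analytic content is required beyond the rank-one decomposition of Proposition \ref{prop:innerkernel} and the embedding $M^1_v\hookrightarrow L^2$.
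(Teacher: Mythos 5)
Your proposal is correct and follows essentially the same route as the paper: decompose $S$ and $T$ into rank-one operators via Proposition \ref{prop:innerkernel}(a), write $ST$ as the double sum $\sum_{n,m}\inner{\psi_m^{(1)}}{\phi_n^{(2)}}_{L^2}\,\phi_n^{(1)}\otimes \psi_m^{(2)}$, estimate the coefficients by Cauchy--Schwarz together with $M^1_v(\Rd)\hookrightarrow L^2(\Rd)$ so that the weight factors into a product, and conclude with the infimum characterization in Proposition \ref{prop:innerkernel}(b). Your extra paragraph identifying the limit of the double series with $ST$ via partial sums in $\bo$ is a careful addition that the paper leaves implicit.
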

\begin{proof}
	Let 
	\begin{align*}
  &S=\sum_{n\in \N} \phi_n^{(1)}\otimes \phi_n^{(2)}, &&T=\sum_{m\in \N} \psi_m^{(1)}\otimes \psi_m^{(2)}
\end{align*}
be decompositions of $S$ and $T$ into rank-one operators as in Proposition \ref{prop:innerkernel}. A simple calculation shows that the composition $ST$ is the operator
\begin{equation*}
  ST=\sum_{m,n\in \N} \inner{\psi_m^{(1)}}{\phi_n^{(2)}}_{L^2}  \phi_n^{(1)}\otimes \psi_m^{(2)}.
  \end{equation*}
This decomposition converges absolutely in $\beauty_{v\otimes v}$, as 
\begin{align*}
  \left\| \inner{\psi_m^{(1)}}{\phi_n^{(2)}}_{L^2}  \phi_n^{(1)}\otimes \psi_m^{(2)}\right\|_{\beauty_{v\otimes v}}&\leq  \left|\inner{\psi_m^{(1)}}{\phi_n^{(2)}}_{L^2} \right| \left\|\phi_n^{(1)}\otimes \psi_m^{(2)}\right\|_{\beauty_{v\otimes v}} \\
  &\leq \|\psi_m^{(1)}\|_{L^2} \|\phi_n^{(2)}\|_{L^2} \|\phi_n^{(1)}\|_{M^1_{v}}\|\psi_m^{(2)}\|_{M^1_{v}},  
\end{align*}
so that  
\begin{equation*}
  \sum_{m,n\in \N} \left\| \inner{\psi_m^{(1)}}{\phi_n^{(2)}}_{L^2}  \phi_n^{(1)}\otimes \psi_m^{(2)}\right\|_{\beauty_{v\otimes v}}
\end{equation*}
is bounded from above by 
\begin{equation*}
 \sum_{m\in \N}  \|\psi_m^{(1)}\|_{L^2} \|\psi_m^{(2)}\|_{M^1_{v}} \sum_{n\in \N} \|\phi_n^{(2)}\|_{L^2} \|\phi_n^{(1)}\|_{M^1_{v}}<\infty.
\end{equation*}
We have used the continuous inclusion (see Proposition \ref{prop:modulationspaces}) $M^1_v(\Rd)\hookrightarrow M^2(\Rd)=L^2(\Rd)$ to obtain $\|\psi_m^{(1)}\|_{L^2}\lesssim \|\psi_m^{(1)}\|_{M^1_v}$ and $\|\phi_n^{(1)}\|_{L^2}\lesssim \|\phi_n^{(1)}\|_{M^1_v}$. The inequality $\|ST\|_{\beauty_{v\otimes v}}\lesssim \|S\|_{\beauty_{v\otimes v}}\|T\|_{\beauty_{v\otimes v}}$ follows from part (b) of Proposition \ref{prop:innerkernel}.
\end{proof}
\begin{remark}
	In \cite[Thm. 7.4.1]{Feichtinger:1998} it is claimed that $\beauty_{v\otimes v}$ for $v\equiv 1$ is even an ideal in $\bo$. This is not true. Consider $S=\psi\otimes \phi_0$ and $T=\phi_0\otimes \phi_0$ where $\psi\in L^2(\Rd)\setminus M^1(\Rd)$. Then $T\in \beauty_{1\otimes 1}$ and $S\in \tco$, and $ST=\psi \otimes \phi_0.$ Yet $ST(\phi_0)=\psi\notin M^1(\Rd)$, so part (d) of Proposition \ref{prop:innerkernel} implies that $ST\notin \beauty_{1\otimes 1}$.
\end{remark}

We next study a continuity property of the Fourier-Wigner transform on $\beauty_{v\otimes v}$.

\begin{proposition} \label{prop:fwbounded}
	The Fourier-Wigner transform is bounded from $\beauty_{v\otimes v}$ to $W(L^1_v)$:
	\begin{equation*}
  \|\F_W(S)\|_{W(L^1_v)}\lesssim \|S\|_{\beauty_{v\otimes v}}.
\end{equation*}
\end{proposition}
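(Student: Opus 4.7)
The plan is to reduce the estimate to the rank-one case via the decomposition given in Proposition \ref{prop:innerkernel}, and then handle rank-one operators by recognizing $\F_W$ of a rank-one operator as a short-time Fourier transform via \eqref{eq:fwrankone}.

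First I would dispatch the rank-one case. For $\phi,\psi\in M^1_v(\Rd)$, the identity \eqref{eq:fwrankone} gives $|\F_W(\phi\otimes\psi)(z)|=|V_\psi\phi(z)|$. The key ingredient is the (standard) bound
\begin{equation*}
  \|V_\psi\phi\|_{W(L^1_v)} \lesssim \|\phi\|_{M^1_v}\|\psi\|_{M^1_v},
\end{equation*}
which follows from the pointwise estimate $|V_\psi\phi|\leq \|\phi_0\|_{L^2}^{-2} (|V_{\phi_0}\phi|\ast |V_{\phi_0}\psi|^\vee)$ (a consequence of Moyal's identity applied to $\langle\phi,\pi(z)\psi\rangle$), together with the fact that $W(L^1_v)$ is a Banach convolution algebra under the submultiplicative GRS-weight $v$, and $M^1_v(\Rd)\hookrightarrow W(L^1_v)$ on $\Rd$ applied to the STFTs $V_{\phi_0}\phi, V_{\phi_0}\psi$. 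Combined with \eqref{eq:fwrankone} and the identity $\|\phi\otimes\psi\|_{\beauty_{v\otimes v}}=\|\phi\|_{M^1_v}\|\psi\|_{M^1_v}$, this gives the desired rank-one bound.

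Next I would extend by linearity. By Proposition \ref{prop:innerkernel}(a), any $S\in\beauty_{v\otimes v}$ decomposes as $S=\sum_n \phi_n^{(1)}\otimes \phi_n^{(2)}$ with absolute convergence in $\beauty_{v\otimes v}$, and by part (b) we may choose the decomposition so that $\sum_n \|\phi_n^{(1)}\|_{M^1_v}\|\phi_n^{(2)}\|_{M^1_v} \lesssim \|S\|_{\beauty_{v\otimes v}}$. Applying the rank-one estimate termwise yields
\begin{equation*}
  \sum_n \|\F_W(\phi_n^{(1)}\otimes\phi_n^{(2)})\|_{W(L^1_v)} \lesssim \sum_n \|\phi_n^{(1)}\|_{M^1_v}\|\phi_n^{(2)}\|_{M^1_v} < \infty,
\end{equation*}
so $\sum_n \F_W(\phi_n^{(1)}\otimes \phi_n^{(2)})$ converges absolutely in the Banach space $W(L^1_v)$. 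To identify the sum with $\F_W(S)$, I would use continuity of $\F_W$ on $\tco$: by the Riemann--Lebesgue estimate \eqref{eq:riemannlebesgue}, $\F_W:\tco\to C_0(\Rdd)$ is bounded, so the convergence $S=\sum_n \phi_n^{(1)}\otimes \phi_n^{(2)}$ in $\tco$ forces $\F_W(S)=\sum_n \F_W(\phi_n^{(1)}\otimes \phi_n^{(2)})$ uniformly. Since $W(L^1_v)$-convergence implies local uniform convergence (the $W(L^1_v)$-norm dominates $\sup$-norms on unit cubes), the $L^\infty$-limit and the $W(L^1_v)$-limit coincide, so the series sums to $\F_W(S)$ in $W(L^1_v)$ and we conclude $\|\F_W(S)\|_{W(L^1_v)}\lesssim \|S\|_{\beauty_{v\otimes v}}$.

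The main obstacle is the rank-one estimate $\|V_\psi\phi\|_{W(L^1_v)}\lesssim \|\phi\|_{M^1_v}\|\psi\|_{M^1_v}$; while standard in time-frequency analysis, it requires the convolution-algebra structure of $W(L^1_v)$ and the interplay of STFTs under change of window. The remainder of the proof is a routine absolute-convergence argument combined with the consistency of the $\tco$- and $W(L^1_v)$-limits via Riemann--Lebesgue.
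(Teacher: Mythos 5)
Your proof is correct and follows essentially the same route as the paper: reduce to rank-one operators via Proposition \ref{prop:innerkernel}, use \eqref{eq:fwrankone} to identify $\F_W(\phi\otimes\psi)$ with a short-time Fourier transform, invoke the estimate $\|V_\psi\phi\|_{W(L^1_v)}\lesssim\|\phi\|_{M^1_v}\|\psi\|_{M^1_v}$ (which the paper simply cites from the proof of \cite[Prop.~12.1.11]{Grochenig:2001}, whereas you sketch its proof via the convolution relation for STFTs), and sum. Your extra step identifying the $W(L^1_v)$-limit of the series with $\F_W(S)$ through the Riemann--Lebesgue bound \eqref{eq:riemannlebesgue} is a welcome detail that the paper leaves implicit.
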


\begin{proof}
First consider the rank-one operator $\psi\otimes \phi\in \beauty_{v\otimes v}$, with $\psi,\phi \in M^1_v(\Rd)$. By \eqref{eq:fwrankone} and the proof of \cite[Prop. 12.1.11]{Grochenig:2001}, there exists $C>0$ such that 
\begin{equation*}
  \|\F_W(\psi\otimes \phi)\|_{W(L^1_v)}\leq C \|\psi\|_{M^1_v}\|\phi\|_{M^1_v}.
\end{equation*}
 If we then use Proposition \ref{prop:innerkernel} to write $S\in \beauty_{v\otimes v}$ as
$
  S=\sum_{n\in \N} \phi_n^{(1)}\otimes \phi_n^{(2)},
$ we find  
\begin{equation*}
  \|\F_W(S)\|_{W(L^1_v)} \leq \sum_{n\in \N} \|\F_W(\phi_n^{(1)}\otimes \phi_n^{(2)})\|_{W(L^1_v)}\leq C \sum_{n\in \N} \|\phi_n^{(1)}\|_{M^1_v}\|\phi_n^{(2)}\|_{M^1_v}.
\end{equation*}
By part (b) of Proposition \ref{prop:innerkernel} this implies that $ \|\F_W(S)\|_{W(L^1_v)}\leq C \|S\|_{\beauty_{v\otimes v}}$.     

\end{proof}
\begin{remark}
	If we consider the polynomial weights $v_s(z)=(1+|z|)^s$ for $s\geq 0$ and $z\in \Rdd$, it is known \cite[Prop. 11.3.1]{Grochenig:2001} that the space of Schwartz functions $\mathcal{S}(\Rdd)$ is given by $\mathcal{S}(\Rdd)=\cap_{s=0}^\infty M^1_{v_s\otimes v_s}(\Rdd)$. Therefore the space of operators with kernel in $\mathcal{S}(\Rdd)$ equals $\cap_{s\geq 0}^\infty \beauty_{v_s\otimes v_s}$. Such operators were recently studied in \cite{Keyl:2016}.
\end{remark}

\subsection{The space $\beauty$ and its dual} The largest of the spaces $\beauty_{v\otimes v}$ is the space $\beauty:=\beauty_{1\otimes 1},$ consisting of operators $S$ with kernel $\kernel_S$ in $M^1(\Rdd)$. By definition the map $\kappa:\beauty \to M^1(\Rdd)$ given by $\kappa(S)=k_S$ is an isometric isomorphism of Banach spaces. By \cite[Thm. 3.1.18]{Megginson:1998} the Banach space adjoint $(\kappa^{-1})^*: \beast\to M^\infty(\Rdd)$ is a weak*-to-weak*-continuous isometric isomorphism, and by definition it satisfies 
\begin{equation} \label{eq:dualityaction1}
  \inner{(\kappa^{-1})^*(\tilde{A})}{\kernel_S}_{M^\infty,M^1}=\inner{\tilde{A}}{S}_{\beast,\beauty} \quad \text{ for } \tilde{A}\in \beast,S\in \beauty.
\end{equation}

Hence, to any $\tilde{A}\in \beast$ we obtain a unique element $(\kappa^{-1})^*(\tilde{A})\in M^\infty(\Rdd)$, which by the kernel theorem for modulation spaces induces an operator $A:M^1(\Rd)\to M^\infty(\Rd)$ such that $\kernel_{A}=(\kappa^{-1})^*(\tilde{A})$. We summarize these identifications in a simple diagram, where k.t. refers to the kernel theorem for modulation spaces:
\begin{equation} \label{eq:correspondance}
  \tilde{A}\in \beast\xleftrightarrow{ (\kappa^{-1})^*} (\kappa^{-1})^*(\tilde{A})=\kernel_{A}\in M^\infty(\Rdd) \xleftrightarrow{\text{k.t.}} A\in \mathcal{L}(M^1, M^\infty).
\end{equation} 
Hereafter we will always identify $\beast$ with operators $A:M^1(\Rd)\to M^\infty(\Rd)$, and use the notation $A$ to refer to both the operator $A:M^1(\Rd)\to M^\infty(\Rd)$ and the abstract functional $\tilde{A}$, which are related by \eqref{eq:correspondance}. Since $(\kappa^{-1})^*(\tilde{A})=\kernel_{A}$, \eqref{eq:dualityaction1} becomes
\begin{equation} \label{eq:dualityaction2}
   \inner{A}{S}_{\beast,\beauty}= \inner{\kernel_A}{\kernel_S}_{M^\infty,M^1} \quad \text{ for } A\in \beast,S\in \beauty.
\end{equation}
If $S$ is a rank-one operator $S=\phi \otimes \psi$ for $\phi,\psi\in M^1(\Rd)$, then $\kernel_{S}=\phi\otimes \overline{\psi}$, so the equation above becomes
\begin{equation} \label{eq:dualityaction3}
  \inner{A}{\phi\otimes \psi}_{\beast,\beauty}=\inner{\kernel_A}{\phi\otimes \overline{\psi}}_{M^\infty,M^1} = \inner{A\psi}{\phi}_{M^\infty,M^1},
\end{equation}
which relates the action of $A$ as an abstract linear functional on $\beauty$ to the action of $A$ as an operator from $M^1(\Rd)$ to $M^\infty(\Rd)$. 

\begin{lemma} \label{lem:denseintraceclass}
$\beauty$ is a dense subset of $\tco$ with respect to $\|\cdot\|_{\tco}$.	
\end{lemma}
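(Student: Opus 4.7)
The plan is to approximate any $T\in\tco$ in trace norm by finite-rank operators whose windows lie in $M^1(\Rd)$; any such operator will automatically belong to $\beauty$. Indeed, if $\phi,\psi\in M^1(\Rd)$, then $\kernel_{\phi\otimes\psi}=\phi\otimes\overline{\psi}\in M^1(\Rdd)$ (equivalently, by \eqref{eq:rankoneprojtensor} with $v\equiv 1$, the operator $\phi\otimes\psi$ belongs to $\beauty$ with $\|\phi\otimes\psi\|_{\beauty}=\|\phi\|_{M^1}\|\psi\|_{M^1}$). Closure under finite sums then puts all finite-rank operators with $M^1$-windows inside $\beauty$, so it suffices to approximate $T$ by such operators in $\|\cdot\|_\tco$.

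First I would use the singular value decomposition of Section \ref{sec:tchs} to write $T=\sum_{n=1}^{N_0}s_n\,\xi_n\otimes\varphi_n$ with $\{s_n\}\in\ell^1$, and observe that the partial sums $T_N=\sum_{n=1}^N s_n\,\xi_n\otimes\varphi_n$ satisfy $\|T-T_N\|_\tco=\sum_{n>N}s_n\to 0$. Given $\varepsilon>0$, choose $N$ with $\|T-T_N\|_\tco<\varepsilon/2$. Next, I would invoke the density of $M^1(\Rd)$ in $L^2(\Rd)=M^2(\Rd)$, which is Proposition \ref{prop:modulationspaces}(g) with $p=2$ and $m\equiv v\equiv 1$, to pick $\tilde\xi_n,\tilde\varphi_n\in M^1(\Rd)$ for $n=1,\dots,N$ satisfying $\|\xi_n-\tilde\xi_n\|_{L^2},\|\varphi_n-\tilde\varphi_n\|_{L^2}<\delta$ for a $\delta>0$ to be chosen. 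Using the rank-one trace-norm identity $\|\eta\otimes\zeta\|_\tco=\|\eta\|_{L^2}\|\zeta\|_{L^2}$ and the bilinear identity
\[
\xi\otimes\varphi-\tilde\xi\otimes\tilde\varphi=(\xi-\tilde\xi)\otimes\varphi+\tilde\xi\otimes(\varphi-\tilde\varphi),
\]
a triangle-inequality argument lets me pick $\delta$ small enough (depending on $\varepsilon$ and on the finitely many quantities $s_n$, $\|\varphi_n\|_{L^2}$, $\|\tilde\xi_n\|_{L^2}$ for $n\leq N$) so that $\|T_N-\tilde T_N\|_\tco<\varepsilon/2$, where $\tilde T_N=\sum_{n=1}^N s_n\,\tilde\xi_n\otimes\tilde\varphi_n\in\beauty$. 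The triangle inequality gives $\|T-\tilde T_N\|_\tco<\varepsilon$, proving density.

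I do not foresee any real obstacle: the infinite tail of singular values is eliminated at the truncation step, so only finitely many windows need to be approximated in $L^2$, and the bound $\delta$ can therefore be chosen uniformly over these indices. The only thing to keep in mind is that one should not try to swap the two steps (approximating all $\xi_n,\varphi_n$ first and then truncating), which would require a quantitative form of density dependent on the tail of $\{s_n\}$; doing truncation first sidesteps this entirely.
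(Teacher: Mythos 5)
Your proof is correct and follows essentially the same route as the paper: reduce to finite-rank operators (the paper cites the density of the span of rank-one operators in $\tco$, which you rederive via the singular value decomposition and truncation), then approximate the finitely many $L^2$-windows by $M^1(\Rd)$ functions and conclude with the rank-one trace-norm identity and the bilinear splitting. No gaps.
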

\begin{proof}
	The rank-one operators span a dense subset of $\tco$ \cite[Thm. 3.11 (e)]{Busch:2016}, hence it suffices to show that any $\psi\otimes \phi\in \tco$ with $\psi,\phi\in L^2(\Rd)$ can be estimated by some $S\in \beauty$. We may safely assume that $\phi\neq 0$, otherwise $\psi\otimes \phi=0\in \beauty$. Let $\epsilon >0$. Since $M^1(\Rd)$ is a dense subset of $L^2(\Rd)$ by \cite[Lem. 4.19]{Jakobsen:2018}, we can find $\xi\neq 0,\eta\in M^1(\Rd)$ with $\|\psi-\xi\|_{L^2}<\frac{\epsilon}{2\|\phi\|_{L^2}}$ and $\|\phi-\eta\|_{L^2}<\frac{\epsilon}{2\|\xi\|_{L^2}}$. Then $\xi\otimes \phi \in \beauty$ and
	\begin{align*}
  \|\psi\otimes \phi-\xi\otimes \eta\|_{\tco}&\leq \|\psi\otimes \phi-\xi\otimes \phi\|_{\tco}+ \|\xi \otimes \phi - \xi \otimes \eta\|_{\tco} \\
  &= \|\psi-\xi\|_{L^2}\|\phi\|_{L^2}+\|\xi\|_{L^2}\|\phi-\eta\|_{L^2}<\epsilon.
\end{align*}
\end{proof}

Now recall that $\bo$ is the dual space of $\tco$ \cite[Thm. 3.13]{Busch:2016}, where $A\in \bo$ acts on $S\in \tco$ by 

\begin{equation} \label{eq:traceclassduality}
  \inner{A}{S}_{\bo,\tco}=\tr(AS^*). 
\end{equation}

 Since the inclusion $\beauty \hookrightarrow \tco$ has dense range, \cite[Thm. 3.1.17]{Megginson:1998} asserts that we get a weak*-to-weak*-continuous inclusion of dual spaces $\bo \hookrightarrow \beast$ satisfying
\begin{equation} \label{eq:dualityaction4}
  \inner{A}{S}_{\beast,\beauty} = \inner{A}{S}_{\bo,\tco}=\tr(AS^*) \quad  \text{ for } A\in \bo,S\in \beauty.
\end{equation}
\begin{remark}
	Readers with little interest in these technical details need only note that we identify $\beast$ with operators $A\in\mathcal{L}(M^1(\Rd),M^\infty(\Rd))$, and that the action of $A$ satisfies \eqref{eq:dualityaction2}, \eqref{eq:dualityaction3} and \eqref{eq:dualityaction4}. 
\end{remark}

 The next result is due to Feichtinger and Kozek \cite{Feichtinger:1998}; in their terminology the result says that $\F_W$ and the Weyl transform are \textit{Gelfand triple isomorphisms.} Recall that $\HS$ are the Hilbert-Schmidt operators on $L^2(\Rd).$
\begin{proposition} \label{prop:weak*}
	The Weyl transform $S\longleftrightarrow a_s$ and Fourier-Wigner transform $S\longleftrightarrow \F_W(S)$ are isomorphisms $\beauty \longleftrightarrow M^1(\Rdd)$, unitary maps $\HS \longleftrightarrow L^2(\Rdd)$ and weak*-to-weak*-continuous isomorphisms $\beast \longleftrightarrow M^\infty(\Rdd)$.
\end{proposition}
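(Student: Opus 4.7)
The plan is to reduce everything to the Weyl transform $W\colon S\mapsto a_S$ and then derive the three statements from the fact that the passage between kernel and Weyl symbol is implemented by a ``metaplectic-type'' operator: a partial Fourier transform composed with a unimodular linear change of variables on $\Rdd$.

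First, by \eqref{eq:fwsymp} we have $\F_W(S)=\F_\sigma(a_S)$, and the symplectic Fourier transform $\F_\sigma$ is already known to be an isomorphism on $M^1(\Rdd)$, a unitary on $L^2(\Rdd)$, and a weak*-to-weak*-continuous isomorphism on $M^\infty(\Rdd)$. Hence it suffices to prove the three claims for $W$.

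Second, I would use the explicit relation $a_S(x,\omega)=\int_{\Rd}k_S(x+t/2,x-t/2)\,e^{-2\pi i \omega\cdot t}\,dt$, so that $a_S=\F_2\circ T(k_S)$, where $(Tk)(x,t)=k(x+t/2,x-t/2)$ is a unimodular linear change of variables and $\F_2$ is the Fourier transform in the second variable. The composition $\F_2\circ T$ is a metaplectic operator on $\Rdd$, and by the metaplectic invariance of Feichtinger's algebra it is an isomorphism on $M^1(\Rdd)$, a unitary on $L^2(\Rdd)$, and extends to a weak*-to-weak*-continuous isomorphism on $M^\infty(\Rdd)$. Combined with the defining relations $S\in\beauty\Leftrightarrow k_S\in M^1(\Rdd)$ (with $\|S\|_{\beauty}=\|k_S\|_{M^1}$) and $S\in\HS\Leftrightarrow k_S\in L^2(\Rdd)$ (isometrically), this immediately gives the $\beauty\leftrightarrow M^1(\Rdd)$ and $\HS\leftrightarrow L^2(\Rdd)$ statements. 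For a more self-contained route, one can use Proposition \ref{prop:innerkernel} to decompose $S\in\beauty$ into rank-one operators $\varphi_n^{(1)}\otimes\varphi_n^{(2)}$; each such operator has Weyl symbol $W(\varphi_n^{(1)},\varphi_n^{(2)})$, and $\|W(\varphi,\psi)\|_{M^1}\asymp\|\varphi\|_{M^1}\|\psi\|_{M^1}$ is a standard fact about the cross-Wigner distribution, so summing yields $a_S\in M^1(\Rdd)$ with the right norm equivalence; the opposite direction is obtained by inverting the relation between $a_S$ and $k_S$.

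Third, for $\beast\leftrightarrow M^\infty(\Rdd)$: via \eqref{eq:correspondance} the kernel map $A\mapsto k_A$ is, by construction, a weak*-to-weak*-continuous isomorphism $\beast\to M^\infty(\Rdd)$ (being the Banach space adjoint $(\kappa^{-1})^*$ of the isomorphism $\kappa^{-1}\colon M^1(\Rdd)\to\beauty$). Post-composing with the weak*-continuous isomorphism $k\mapsto a$ on $M^\infty(\Rdd)$ from the previous step yields the Weyl transform as a weak*-to-weak*-continuous isomorphism $\beast\to M^\infty(\Rdd)$, and applying $\F_\sigma$ once more gives the Fourier-Wigner version.

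The main obstacle is the preservation of $M^1(\Rdd)$ and the weak*-continuity on $M^\infty(\Rdd)$ under the kernel-to-symbol transformation: this is the one nontrivial input, and it is exactly where the Feichtinger-algebra theory (metaplectic invariance of $M^1$, or equivalently the $M^1$-boundedness of the cross-Wigner) does its work. Once that is granted, the remaining arguments are diagram-chasing through \eqref{eq:correspondance} and the duality set up in Section~\ref{sec:operators}.
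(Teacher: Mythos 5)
Your argument is correct in substance, and it is worth noting at the outset that the paper itself does not prove this proposition: it is quoted from Feichtinger and Kozek \cite{Feichtinger:1998} as a known Gelfand-triple isomorphism statement, so there is no in-paper proof to compare against. Your route is precisely the standard one from that literature: reduce the Fourier--Wigner case to the Weyl case via \eqref{eq:fwsymp} and the stated mapping properties of $\F_\sigma$; observe that the kernel-to-symbol map $k_S\mapsto a_S$, namely $a_S(x,\omega)=\int_{\Rd}k_S(x+t/2,x-t/2)e^{-2\pi i\omega\cdot t}\,dt$ (consistent with the paper's conventions, since by \eqref{eq:kernelrankone} the rank-one operator $\phi\otimes\psi$ has kernel $\phi\otimes\overline{\psi}$ and Weyl symbol $W(\phi,\psi)$), is a partial Fourier transform composed with a unimodular linear change of variables, hence an isomorphism of $M^1(\Rdd)$, a unitary on $L^2(\Rdd)$, and, by taking Banach space adjoints, a weak*-homeomorphic automorphism of $M^\infty(\Rdd)$; and finally transport these properties through the isometries $\|S\|_{\beauty}=\|k_S\|_{M^1}$ and $\|S\|_{\HS}=\|k_S\|_{L^2}$ and the weak*-isomorphism $(\kappa^{-1})^*$ of \eqref{eq:correspondance}. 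Two small points deserve explicit mention if you write this out. First, the weak*-continuity of the $M^\infty$-level extension of $\F_2\circ T$ is not automatic from invertibility; you should realize the extension as the adjoint of the corresponding isomorphism of $M^1(\Rdd)$ (adjoints are weak*-to-weak*-continuous by default) and check consistency with the $L^2$-action by density of $M^1$ in $L^2$. Second, in your ``self-contained'' alternative via Proposition \ref{prop:innerkernel}, only the upper bound $\|W(\varphi,\psi)\|_{M^1}\lesssim\|\varphi\|_{M^1}\|\psi\|_{M^1}$ is needed for the forward direction; the lower bound for general $S$ should come from applying the same summation argument to the inverse kernel-to-symbol transformation rather than from a two-sided rank-one estimate, since the rank-one decomposition of $a_S$ induced by Proposition \ref{prop:innerkernel} need not be norm-optimal.
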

An appropriate framework for such statements is the theory of (Banach) Gelfand triples \cite{Feichtinger:1998,Feichtinger:2009,Cordero:2008}. In particular, that approach gives the duality bracket identity
	\begin{equation} \label{eq:gelftripoperators}
  \inner{S}{T}_{\beast,\beauty}= \inner{\weyl_S}{\weyl_T}_{M^\infty,M^1},
\end{equation}
where $\weyl_S$ and $\weyl_T$ are the Weyl symbols of $S$ and $T$, see \cite[Cor. 5]{Cordero:2008}. 	

\begin{remark}
We will often consider weak*-convergence of sequences in $\beast$. To get a better grasp of this notion of convergence, note that if a sequence $\{A_n\}_{n\in \N}\subset \beast$ converges in the weak* topology to $A\in \beast$ then \eqref{eq:dualityaction3} gives for $\psi,\phi\in M^1(\Rd)$
\begin{equation*}
  \inner{A_n \psi}{\phi}_{M^\infty,M^1}\to \inner{A\psi}{\phi}_{M^\infty,M^1}.
\end{equation*}
Hence: if $A_n\to A$ in the weak* topology of $\beast$, then $A_n\psi\to A\psi$ in the weak* topology of $M^\infty(\Rd)$ for any $\psi\in M^1(\Rd)$.
\end{remark}

\section{Gabor g-frames} \label{sec:ggf}
Gabor frames, or more generally multi-window Gabor frames, have a richer structure than general frames. Since any frame is also a g-frame, we can ask whether Gabor frames belong to a certain class of g-frames, and whether this class contains other g-frames that share the rich structure of Gabor frames. This is the motivation for the following definition.

\begin{definition}
	Let $\Lambda$ be a lattice in $\Rdd$ and $S\in \bo$. We say that $S$ generates a \textit{Gabor g-frame} with respect to $\Lambda$ if $\{\alpha_\lambda (S)\}_{\lambda \in \Lambda}$ is a g-frame for $L^2(\Rd)$, i.e. if there exist positive constants $A,B>0$ such that 
\begin{equation} \label{eq:gaborgframes}
	A\|\psi\|_{L^2}^2 \leq \sum_{\lambda \in \Lambda} \|\alpha_{\lambda}(S)\psi\|_{L^2}^2 \leq B \|\psi\|_{L^2}^2 \text{\ \ \ \  for any } \psi \in L^2(\Rd).
\end{equation}
\end{definition} 
\begin{remark}[Cohen's class] \label{rem:cohen}

This definition may also be rephrased in terms of Cohen's class of time-frequency distributions\cite{Cohen:1966}.  In the notation from \cite{Luef:2018b} an operator $T\in \bo$ defines a Cohen's class distribution $Q_T$ by $$Q_T(\psi)(z)=\inner{T\pi(z)^*\psi}{\pi(z)^*\psi}_{L^2} \quad \text{ for } z\in \Rdd, \psi \in L^2(\Rd).$$ 
It is straightforward to show that 
	 $$\|\alpha_{z}(S)\psi\|_{L^2}^2= Q_{S^*S}(\psi)(z),$$ hence 
	 \eqref{eq:gaborgframes} may be rephrased as
	\begin{equation*} 
	A\|\psi\|_{L^2}^2 \leq \sum_{\lambda \in \Lambda} Q_{S^*S}(\psi)(\lambda) \leq B \|\psi\|_{L^2}^2 \text{\ \ \ \  for any } \psi \in L^2(\Rd).
\end{equation*}
We will soon see that \eqref{eq:gaborgframes} forces $S$ to be a Hilbert Schmidt operator, which implies by \cite[Thm. 7.6]{Luef:2018b} that $Q_{S^*S}$ is a positive Cohen's class distribution satisfying 
\begin{equation} \label{eq:continuouscohen}
  \int_{\Rdd} Q_{S^*S}(\psi)(z) \ dz = \int_{\Rdd} \|\alpha_{z}(S)\psi\|_{L^2}^2 \ dz = \|S\|_{\HS}^2 \|\psi\|_{L^2}^2,
\end{equation}
 as recently studied in \cite{Luef:2018b,Luef:2018a}. This equality is a continuous version of \eqref{eq:gaborgframes}, similar to how Moyal's identity is a continuous version \footnote{In fact, Moyal's identity says that the system $\{\pi(z)\varphi\}_{z\in \Rdd}$ is a \textit{tight continuous frame} for $L^2(\Rd)$ for any $0\neq \varphi \in L^2(\Rd)$.  See \cite{Christensen:2016} for continuous frames. Similarly, \eqref{eq:continuouscohen} says that $\{\alpha_z(S)\}_{z\in \Rdd}$ is a \textit{tight continuous g-frame} as introduced in \cite{Abdollahpour:2008}.} of the Gabor frame inequalities \eqref{eq:intro:gaborframeop}.  The simplest example of a Cohen's class distribution of the form $Q_{S^*S}$ is the spectrogram $Q_{S^*S}(z)=|V_{\phi}\psi(z)|^2$ for some $\phi \in L^2(\Rd)$, which corresponds to the rank-one operator $S=\frac{1}{\|\phi\|_{L^2}^2}\phi\otimes \phi$. By inserting $\|\alpha_z(S)\psi\|_{L^2}^2=Q_{S^*S}(\psi)(z)=|V_\phi \psi(z)|^2$, \eqref{eq:gaborgframes} becomes the condition for $\phi$ to generate a Gabor frame. We return to this special case in Example \ref{example:multiwindow}. 
\end{remark}

\subsection{The Gabor g-frame operator}
By the general theory of g-frames, the g-frame operator associated to a Gabor g-frame generated by $S$ over a lattice $\Lambda$ is the operator
\begin{equation}\label{eq:gframeop}
  \frameop_S=\sum_{\lambda\in \Lambda} (\alpha_\lambda (S))^* (\alpha_\lambda (S))=\sum_{\lambda\in \Lambda}\alpha_{\lambda}(S^*S),
\end{equation}
where the last equality uses \eqref{eq:translateproduct}. Furthermore, $\frameop_S$ satisfies
\begin{equation*}
  \inner{\frameop_S \psi}{\psi}_{L^2} =\sum_{\lambda \in \Lambda} \|\alpha_{\lambda}(S)\psi\|_{L^2}^2\quad  \text { for } \psi \in L^2(\Rd),
\end{equation*}
and $\frameop_S$ is positive, bounded and invertible on $L^2(\Rd)$ with $A\leq \|\frameop_S\|_{\bo}\leq B$ and $\frac{1}{B}\leq \|\frameop_S^{-1}\|\leq \frac{1}{A}$. Since we think of $\alpha_\lambda (S^*S)$ as the translation of $S^*S$ by $\lambda\in \Lambda$, the g-frame operator $\frameop_S$ corresponds to the \textit{periodization} of $S^*S$ over $\Lambda$.\\

\subsection{Analysis and synthesis operators}
Let $\ell^2(\Lambda;L^2(\Rd))$ be the Hilbert space of sequences $\{\psi_\lambda\}_{\lambda \in \Lambda}\subset L^2(\Rd)$ such that 
\begin{equation*}
  \|\{\psi_\lambda\}\|_{\ell^2(\Lambda;L^2)}:=\left(\sum_{\lambda \in \Lambda} \|\psi_\lambda\|_{L^2}^2\right)^{1/2}<\infty,
\end{equation*}
with inner product
\begin{equation*}
  \inner{\{\psi_\lambda\}}{\{\phi_\lambda\}}_{\ell^2(\Lambda;L^2)}=\sum_{\lambda \in \Lambda} \inner{\psi_\lambda}{\phi_\lambda}_{L^2} .
\end{equation*}
For $S\in \bo$ we define the \textit{analysis operator} $C_S$ by
\begin{equation*}
  C_S(\psi)=\left\{ \alpha_{\lambda}(S)\psi \right\}_{\lambda \in \Lambda} \quad \text{ for } \psi \in L^2(\Rd)
\end{equation*}
and the \textit{synthesis operator} $D_S$ by
\begin{equation*}
  D_S(\{\psi_\lambda\}):= \sum_{\lambda \in \Lambda} \alpha_\lambda(S^*)\psi_\lambda \quad \text{ for } \{\psi_\lambda\}_{\lambda\in \Lambda}\in \ell^2(\Lambda;L^2).
\end{equation*}
The upper bound in \eqref{eq:gaborgframes} is precisely the statement that $C_S:L^2(\Rd)\to \ell^2(\Lambda;L^2)$ is a bounded operator with operator norm $\leq \sqrt{B}$. It is not difficult to show that $D_S$ is the Hilbert space adjoint of $C_S$, which implies that $D_S$ is bounded whenever $C_S$ is, with the same operator norm as $C_S$. It follows from the definitions that 
\begin{equation*}
  \frameop_{S}=D_SC_S.
\end{equation*}
\subsubsection{Dual g-frames} \label{sec:dual}
If $S$ generates a Gabor g-frame over $\Lambda$, then the theory of g-frames \cite{Sun:2006} says that the \textit{canonical dual g-frame} is 
\begin{equation*} 
  \{\alpha_\lambda (S)\frameop_{S}^{-1}\}_{\lambda \in \Lambda}.
\end{equation*}
It is clear from \eqref{eq:gframeop} that $\alpha_\lambda (\frameop_S)=\frameop_S$ for any $\lambda \in \Lambda$, and it is then easy to check that we also have $\alpha_\lambda (\frameop_S^{-1})=\frameop_S^{-1}$. The canonical dual g-frame is therefore 
\begin{equation*}
  \{\alpha_\lambda (S)\frameop_{S}^{-1}\}_{\lambda \in \Lambda}=\{\alpha_\lambda (S)\alpha_\lambda(\frameop_{S}^{-1})\}_{\lambda \in \Lambda}=\{\alpha_\lambda (S\frameop_{S}^{-1})\}_{\lambda \in \Lambda}.
\end{equation*}
Hence the canonical dual g-frame is also a Gabor g-frame, generated by $S\frameop_{S}^{-1}.$
We get the reconstruction formulas
\begin{align*} 
  \psi &= \frameop_{S}\frameop_S^{-1}\psi=\sum_{\lambda \in \Lambda} \alpha_\lambda (S^*S)\frameop_S^{-1}\psi=\sum_{\lambda \in \Lambda} \alpha_\lambda (S^*) \alpha_\lambda(S\frameop_S^{-1})\psi=D_{S}C_{S\frameop_S^{-1}}\psi, \\
  \psi&=\frameop_{S}^{-1}\frameop_{S}\psi=\frameop_S^{-1} \sum_{\lambda\in \Lambda} \alpha_\lambda(S^*S) \psi =\sum_{\lambda\in \Lambda} \alpha_\lambda(\frameop_S^{-1}S^*)\alpha_\lambda(S)\psi=D_{S\frameop_S^{-1}}C_S\psi.
\end{align*}
In the very last of these equalities we have used that $\frameop_S^{-1}$ is a positive (hence self-adjoint) operator, so $(S\frameop_S^{-1})^*=\frameop_S^{-1}S^*$. Inspired by these formulas and the theory of dual windows for Gabor frames \cite{Grochenig:2001,Feichtinger:1998a}, we say that two operators $S,T\in \bo$ generate \textit{dual Gabor g-frames} if $S$ and $T$ generate Gabor g-frames and $D_SC_T$ is the identity operator on $L^2(\Rd)$, i.e.
 \begin{equation} \label{eq:dualgaborgframes}
  \sum_{\lambda \in \Lambda} \alpha_\lambda (S^*)\alpha_\lambda(T)\psi=\sum_{\lambda \in \Lambda} \alpha_\lambda (S^*T)\psi=\psi \quad \text{ for any } \psi \in L^2(\Rd).
\end{equation}
If $D_S$ and $C_T$ are bounded operators (i.e. $S$ and $T$ satisfy the upper g-frame bound in \eqref{eq:gaborgframes}), then \eqref{eq:dualgaborgframes} implies that both $S$ and $T$ generate Gabor g-frames. This follows from the general theory of g-frames, see \cite[p. 441]{Sun:2006}: the lower bound in \eqref{eq:gaborgframes} for $T$ follows from $$\|\psi\|_{L^2}^2=\|D_SC_T\psi\|^2_{L^2}\lesssim \|C_T\psi\|_{\ell^2(\Lambda;L^2)}^2=\sum_{\lambda \in \Lambda} \|\alpha_\lambda (T)\psi\|_{L^2}^2,$$ and the lower bound for $S$ is similar. We state this as a proposition for later reference.
\begin{proposition} \label{prop:dualframes}
	Assume that $S,T\in \bo$ satisfy \eqref{eq:dualgaborgframes} and the upper bound in  \eqref{eq:gaborgframes}. Then $S$ and $T$ generate Gabor g-frames.  
\end{proposition}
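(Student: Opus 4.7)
The plan is to read the two required lower g-frame bounds directly from the identity \eqref{eq:dualgaborgframes} by viewing it as a bounded factorisation of the identity on $L^2(\Rd)$, and to handle the roles of $S$ and $T$ symmetrically by taking adjoints.

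First I would note that the upper bound in \eqref{eq:gaborgframes} for each of $S$ and $T$ is exactly the statement that $C_S,C_T:L^2(\Rd)\to \ell^2(\Lambda;L^2)$ are bounded, with operator norms controlled by $\sqrt{B_S}$ and $\sqrt{B_T}$ respectively. Since the synthesis operators are the Hilbert space adjoints of the analysis operators ($D_S=C_S^*$ and $D_T=C_T^*$), the adjoints $D_S$ and $D_T$ are bounded with the same norms. This step is immediate from the definitions of $C_S,D_S$ and the computation done just after them in the paper.

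Next, I would plug the hypothesis \eqref{eq:dualgaborgframes}, rewritten as the operator identity $D_S C_T = \mathrm{id}_{L^2}$, into the chain
\begin{equation*}
  \|\psi\|_{L^2}^2 = \|D_S C_T \psi\|_{L^2}^2 \leq \|D_S\|_{\mathrm{op}}^2\, \|C_T\psi\|_{\ell^2(\Lambda;L^2)}^2 \leq B_S \sum_{\lambda\in \Lambda}\|\alpha_\lambda(T)\psi\|_{L^2}^2,
\end{equation*}
which, combined with the upper bound for $T$, is precisely the two-sided inequality needed to conclude that $T$ generates a Gabor g-frame (with lower bound $1/B_S$). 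To obtain the analogous statement for $S$, I would take Hilbert space adjoints of the identity $D_S C_T = \mathrm{id}$: since $\mathrm{id}^* = \mathrm{id}$, $D_S^* = C_S$ and $C_T^* = D_T$, this yields $D_T C_S = \mathrm{id}$. Applying the exact same estimate with the roles of $S$ and $T$ swapped gives $\|\psi\|_{L^2}^2 \leq B_T \sum_{\lambda \in \Lambda}\|\alpha_\lambda(S)\psi\|_{L^2}^2$, completing the proof.

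There is no real obstacle in this proof, since the work of setting up analysis and synthesis operators, verifying that they are adjoint to each other, and linking the upper g-frame bound to boundedness of $C_S$ has already been done in the two preceding subsections. The only point one has to be careful about is identifying the hypothesis as a bounded factorisation $D_S C_T = \mathrm{id}$ on all of $L^2(\Rd)$ (which requires the upper bound to even make sense of $C_T$ and $D_S$), and observing that taking adjoints of this identity gives the symmetric identity $D_T C_S = \mathrm{id}$ for free.
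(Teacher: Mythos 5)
Your proposal is correct and follows essentially the same route as the paper: the paper also bounds $\|\psi\|_{L^2}^2=\|D_SC_T\psi\|_{L^2}^2\lesssim\|C_T\psi\|_{\ell^2(\Lambda;L^2)}^2$ to get the lower bound for $T$ and simply remarks that the bound for $S$ is similar. Your explicit adjoint step $D_TC_S=(D_SC_T)^*=\mathrm{id}$ is a clean way of making that "similar" argument precise, but it is the same proof.
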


\subsection{Two examples}
 We will now show that the Gabor g-frames include multi-window Gabor frames as a special case. 
 \begin{example}[Multi-window Gabor frames] \label{example:multiwindow}
 Consider a set of $N<\infty$ functions $\{\phi_n\}_{n=1}^N\subset L^2(\Rd)$. We seek an operator $S$ such that the multi-window Gabor system generated by $\{\phi_n\}_{n=1}^N$ is captured by the system $\{\alpha_\lambda(S)\}_{\lambda \in \Lambda}$. To achieve this, let $\{\xi_n\}_{n=1}^N$ be any orthonormal set in $L^2(\Rd)$, and consider the operator 
 \begin{equation*}
  S=\sum_{n=1}^N \xi_n \otimes \phi_n.
\end{equation*}
We start by writing out the condition \eqref{eq:gaborgframes} for $S$ to generate a Gabor g-frame. For $\psi \in L^2(\Rd)$, we easily find using \eqref{eq:shiftrankone} that 
\begin{equation*}
  \alpha_\lambda(S)\psi=\sum_{n=1}^N V_{\phi_n}\psi(\lambda) \pi(\lambda)\xi_n.
\end{equation*}
 By the orthonormality of $\{\xi_n\}_{n=1}^N$ and Pythagoras' theorem for inner product spaces, this implies that $\|\alpha_\lambda(S)\psi\|_{L^2}^2=\sum_{n=1}^N|V_{\phi_n}\psi(\lambda)|^2$. Inserting this into \eqref{eq:gaborgframes}, we see that $S$ generates a Gabor g-frame if and only if
\begin{equation*}
  A\|\psi\|^2_{L^2} \leq \sum_{\lambda \in \Lambda} \sum_{n=1}^N |V_{\phi_n}\psi(\lambda)|^2 \leq B \|\psi\|_{L^2}^2 \quad \psi \in L^2(\Rd)
\end{equation*}
for some $A,B>0$, which is precisely the condition that $\{\phi_n\}_{n=1}^N$ generate a multi-window Gabor frame.

We then note that $S^*=\sum_{n=1}^N \phi_n \otimes \xi_n$, and $S^*S=\sum_{n=1}^N \phi_n \otimes \phi_n$ by the orthonormality of $\{\xi_n\}_{n=1}^N$. Denote by $C_{MW}$ and $\frameop_{MW}$ the analysis and frame operator associated with the multi-window Gabor system generated by $\{\phi_n\}_{n=1}^N$. For $\psi\in L^2(\Rd)$, we find that 
\begin{align*}
  &C_S(\psi)= \left\{\sum_{n=1}^N V_{\phi_n} \psi(\lambda) \pi(\lambda)\xi_n\right\}_{\lambda \in \Lambda}, && C_{MW}(\psi)=\{V_{\phi_n}\psi(\lambda)\}_{n\in \Z_n,\lambda \in \Lambda}, \\
   &\frameop_S(\psi)= \sum_{\lambda \in \Lambda} \sum_{n=1}^N V_{\phi_n} \psi(\lambda) \pi(\lambda)\phi_n, && \frameop_{MW}(\psi)= \sum_{\lambda \in \Lambda} \sum_{n=1}^N V_{\phi_n} \psi(\lambda) \pi(\lambda)\phi_n.
\end{align*}
We see that the frame operators $\frameop_S$ and $\frameop_{MW}$ are equal. Since $\{\pi(\lambda)\xi_n\}_{n\in \N}$ is orthonormal for each $\lambda \in \Lambda$, we also see that $C_{MW}(\psi)$ and $C_{S}(\psi)$ carry exactly the same information: if we know $C_S(\psi)$, i.e. we know $$\sum_{n=1}^N V_{\phi_n} \psi (\lambda) \pi(\lambda)\xi_n$$ for each $\lambda \in \Lambda$, we can find $C_{MW}(\psi)$ by $$V_{\phi_m}\psi(\lambda)=\inner{\sum_{n=1}^N V_{\phi_n} \psi (\lambda) \pi(\lambda)\xi_n}{\pi(\lambda)\xi_m}_{L^2}.$$ Hence multi-window Gabor frames are Gabor g-frames.
 \end{example}
A less trivial example was considered in \cite{Dorfler:2006,Dorfler:2011}. Section \ref{sec:eqnorms} will be dedicated to showing that the results from \cite{Dorfler:2011} hold for more general Gabor g-frames, and not just for the following example. The fact that the results of \cite{Dorfler:2006} is an example of g-frames was noted already by Sun \cite{Sun:2006}.
\begin{example}[Localization operators] \label{example:locops}
  Let $0\neq \varphi \in M^1_v(\Rd)$, $\Lambda$ a lattice and $h\in L^1_v(\Rdd)$ a non-negative function. Here $h\in L^1_v(\Rdd)$ means that $\|h\|_{L^1_v}:=\int_{\Rdd} h(z)v(z)\ dz<\infty$. Assume further that $$A'\leq \sum_{\lambda \in \Lambda} h(z-\lambda)\leq B' \quad \text{ for all } z\in \Rdd$$ for some $A',B'>0$. Then the localization operator $A_h^\varphi$ generates a Gabor g-frame over $\Lambda$ \cite{Sun:2006,Dorfler:2011,Dorfler:2006}. The key to connecting the summability condition on $h$ to the Gabor g-frame condition for $A_h^\varphi$ is equation \eqref{eq:translatelocop}. We will return to this example in Section \ref{sec:locops}.
\end{example}
\subsection{A trace class condition}
In the definition of Gabor g-frames, we only assumed that $S$ was a bounded linear operator on $L^2(\Rd)$. We will now show that $S$ must be a Hilbert Schmidt operator. The following lemma is essentially the same as \cite[Lem. 3.1]{Balazs:2008}. 
\begin{lemma} \label{lem:frametrace}
	Let $T\in \bo$ be a positive operator. If $\{\xi_n\}_{n\in \N}$ is an orthonormal basis for $L^2(\Rd)$ and $\{\eta_j\}_{j\in \N}$ is a Parseval frame, then 
	\begin{equation*}
  \tr(T):=\sum_{n\in \N} \inner{T \xi_n}{\xi_n}_{L^2} =\sum_{j\in \N} \inner{T \eta_j}{\eta_j}_{L^2} .
\end{equation*}
\end{lemma}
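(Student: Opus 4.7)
The plan is to exploit the positivity of $T$ to factor it as $T = T^{1/2} T^{1/2}$ with $T^{1/2}$ self-adjoint, and then rewrite each inner product $\inner{T\cdot}{\cdot}$ as a squared norm that can be expanded using the Parseval property of $\{\eta_j\}$.

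First, I would write
\begin{equation*}
\inner{T\eta_j}{\eta_j}_{L^2} = \inner{T^{1/2}\eta_j}{T^{1/2}\eta_j}_{L^2} = \|T^{1/2}\eta_j\|_{L^2}^2.
\end{equation*}
Since $\{\xi_n\}_{n\in\N}$ is an orthonormal basis, Parseval's identity in the ONB gives
\begin{equation*}
\|T^{1/2}\eta_j\|_{L^2}^2 = \sum_{n\in\N} |\inner{T^{1/2}\eta_j}{\xi_n}_{L^2}|^2 = \sum_{n\in\N} |\inner{\eta_j}{T^{1/2}\xi_n}_{L^2}|^2,
\end{equation*}
where the last equality uses that $T^{1/2}$ is self-adjoint.

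Next I would sum over $j$ and swap the order of summation. All terms are non-negative, so Tonelli's theorem justifies the interchange:
\begin{equation*}
\sum_{j\in\N} \inner{T\eta_j}{\eta_j}_{L^2} = \sum_{j\in\N}\sum_{n\in\N} |\inner{\eta_j}{T^{1/2}\xi_n}_{L^2}|^2 = \sum_{n\in\N}\sum_{j\in\N} |\inner{T^{1/2}\xi_n}{\eta_j}_{L^2}|^2.
\end{equation*}
Now I invoke the defining property of the Parseval frame: for every $\psi \in L^2(\Rd)$, $\sum_{j\in\N}|\inner{\psi}{\eta_j}_{L^2}|^2 = \|\psi\|_{L^2}^2$. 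Applying this with $\psi = T^{1/2}\xi_n$ yields
\begin{equation*}
\sum_{j\in\N} |\inner{T^{1/2}\xi_n}{\eta_j}_{L^2}|^2 = \|T^{1/2}\xi_n\|_{L^2}^2 = \inner{T\xi_n}{\xi_n}_{L^2}.
\end{equation*}
Summing over $n$ gives the claim.

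There is no serious obstacle here: the only subtlety is the use of Tonelli to swap the double sum, which is permitted precisely because $T \geq 0$ ensures every summand is non-negative. The argument would fail in general (one would need absolute convergence, which need not hold for an arbitrary bounded $T$), so the positivity hypothesis is essential. The existence of the square root $T^{1/2}$ is guaranteed by the standard continuous functional calculus for positive bounded operators, and its self-adjointness follows from the real-valuedness of $t\mapsto \sqrt{t}$ on $[0,\infty)$.
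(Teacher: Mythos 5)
Your proof is correct and follows essentially the same route as the paper: factor $T=T^{1/2}T^{1/2}$, expand $\|T^{1/2}\eta_j\|_{L^2}^2$ in the orthonormal basis, swap the (non-negative) double sum, and apply the Parseval frame identity to $T^{1/2}\xi_n$. Your explicit appeal to Tonelli makes the interchange of summation slightly more careful than the paper's, but the argument is the same.
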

\begin{proof}
	Using the square root of the positive operator $T$, we have that $$\inner{T\eta_j}{\eta_j}_{L^2} =\inner{T^{1/2}\eta_j}{T^{1/2}\eta_j}_{L^2} = \|T^{1/2}\eta_j\|_{L^2}^2.$$ Hence by Parseval's identity
	\begin{align*}
  \sum_{j\in \N} \inner{T \eta_j}{\eta_j}_{L^2}  &= \sum_{j\in \N} \|T^{1/2}\eta_j\|_{L^2}^2 \\
  &= \sum_{j\in \N} \sum_{n\in \N} \left|\inner{T^{1/2}\eta_j}{\xi_n}_{L^2} \right|^2 \\
  &=  \sum_{n\in \N} \sum_{j\in \N} \left|\inner{\eta_j}{T^{1/2}\xi_n}_{L^2} \right|^2 \\
  &= \sum_{n\in \N} \|T^{1/2}\xi_n\|_{L^2}^2 
  = \sum_{n\in \N} \inner{T\xi_n}{\xi_n}_{L^2} .
\end{align*}
\end{proof}

\begin{proposition}
	Let $\Lambda$ be any lattice and assume that $\{\alpha_\lambda (S)\}_{\lambda \in \Lambda}$ satisfies the upper g-frame bound in \eqref{eq:gaborgframes}. Then $S^*S$ is a trace class operator. Equivalently,  $S$ is a Hilbert Schmidt operator. 
\end{proposition}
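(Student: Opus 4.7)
The plan is to derive a Hilbert--Schmidt bound on $S$ by combining the upper g-frame bound with Moyal's identity, after a fundamental-domain ``continuization'' of the lattice sum. First I would rewrite the upper bound using unitarity of $\pi(\lambda)$: since $\|\alpha_\lambda(S)\psi\|_{L^2}^2 = \|S\pi(\lambda)^*\psi\|_{L^2}^2$, the hypothesis reads
\begin{equation*}
\sum_{\lambda \in \Lambda} \|S\pi(\lambda)^*\psi\|_{L^2}^2 \leq B\|\psi\|_{L^2}^2 \quad \text{for every } \psi \in L^2(\Rd).
\end{equation*}
Fix any unit vector $\varphi' \in L^2(\Rd)$ and apply this with $\psi = \pi(y)\varphi'$ for $y\in \Rdd$. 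Because $\pi(\lambda)^*\pi(y)$ equals $\pi(y-\lambda)$ up to a unimodular phase (and such phases disappear under the norm), we obtain the pointwise estimate
\begin{equation*}
\sum_{\lambda \in \Lambda} \|S\pi(y-\lambda)\varphi'\|_{L^2}^2 \leq B \quad \text{for every } y \in \Rdd.
\end{equation*}

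Next I would integrate this inequality over a fundamental domain $F$ for $\Lambda$. Since $|F|=|\Lambda|$ and the translates $\{F - \lambda\}_{\lambda \in \Lambda}$ tile $\Rdd$ up to a null set, the left-hand side unfolds by Tonelli (all terms nonnegative) into $\int_{\Rdd} \|S\pi(z)\varphi'\|_{L^2}^2\, dz$, yielding
\begin{equation*}
\int_{\Rdd} \|S\pi(z)\varphi'\|_{L^2}^2\, dz \leq B|\Lambda|.
\end{equation*}

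Finally I would recognize the integral on the left as $\|S\|_{\HS}^2$ via Moyal's identity. Expanding $\|S\pi(z)\varphi'\|_{L^2}^2 = \sum_n |\inner{\pi(z)\varphi'}{S^*\eta_n}_{L^2}|^2$ for an orthonormal basis $\{\eta_n\}$ of $L^2(\Rd)$, swapping sum and integral by Tonelli, and applying Moyal's identity to each term (valid because $\varphi'$ is a unit vector) gives $\sum_n \|S^*\eta_n\|_{L^2}^2 = \|S^*\|_{\HS}^2 = \|S\|_{\HS}^2$. Combining with the previous display shows $\|S\|_{\HS}^2 \leq B|\Lambda|<\infty$, which is precisely the statement that $S\in \HS$, equivalently $S^*S \in \tco$.

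The main point to be careful about is conceptual rather than computational: since $S$ is only assumed bounded a priori, one has to treat everything as nonnegative extended reals throughout the three steps (the lattice upper bound, the unfolding of the sum across a fundamental domain, and the Moyal expansion). Each of these manipulations is legitimate for $[0,\infty]$-valued sums and integrals, and only at the final line does one conclude that $\|S\|_{\HS}$ is actually finite.
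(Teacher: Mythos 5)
Your proof is correct, and it takes a genuinely different route from the paper's. The paper first establishes a lemma stating that the trace of a positive operator can be computed by summing $\inner{T\eta_j}{\eta_j}_{L^2}$ over any Parseval frame $\{\eta_j\}$, then invokes the (nontrivial, external) fact that every lattice $\Lambda$ admits a Parseval multi-window Gabor frame $\{\pi(\lambda)\varphi_n\}_{n=1,\dots,N,\,\lambda\in\Lambda}$; computing $\tr(S^*S)$ against this frame turns the trace directly into the lattice sums $\sum_n\sum_\lambda\|\alpha_\lambda(S)\varphi_n\|_{L^2}^2$, which are finite by the upper bound. You instead average the pointwise estimate $\sum_\lambda\|S\pi(y-\lambda)\varphi'\|_{L^2}^2\le B$ over a fundamental domain, unfold the sum into the integral $\int_{\Rdd}\|S\pi(z)\varphi'\|_{L^2}^2\,dz$, and identify this with $\|S\|_{\HS}^2$ via Moyal's identity — each step legitimate in $[0,\infty]$. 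Your argument is more self-contained (it needs only Moyal's identity rather than the existence of Parseval multi-window Gabor frames over an arbitrary lattice) and yields the explicit quantitative bound $\|S\|_{\HS}^2\le B|\Lambda|$; the continuous identity you reprove is essentially equation \eqref{eq:continuouscohen} of the paper. The paper's route has the advantage of working entirely with the discrete system and of packaging the computation into a reusable trace lemma. Both proofs are complete and correct.
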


\begin{proof}
	The upper g-frame bound implies that $\sum_{\lambda\in \Lambda} \|\alpha_\lambda(S) \psi\|_{L^2}^2<\infty$ for any $\psi \in L^2(\Rd)$. There exist $\{\varphi_n\}_{n=1}^N\subset L^2(\Rd)$ that generate a Parseval multi-window Gabor frame over $\Lambda$ \cite{Luef:2009}, i.e. $\{\pi(\lambda)\varphi_n\}_{n=1,...,N,\lambda \in \Lambda}$ is a Parseval frame. Then Lemma \ref{lem:frametrace} says that
	\begin{align*}
  \tr(S^*S)&=\sum_{n=1}^N\sum_{\lambda \in \Lambda} \inner{S^*S\pi(\lambda)\varphi_n}{\pi(\lambda)\varphi_n}_{L^2}  \\
  &=\sum_{n=1}^N\sum_{\lambda \in \Lambda} \inner{S\pi(\lambda)\varphi_n}{S\pi(\lambda)\varphi_n}_{L^2}  \\
  &= \sum_{n=1}^N\sum_{\lambda \in \Lambda} \|S\pi(\lambda)\varphi_n\|_{L^2}^2. 
\end{align*}
By first using that $\pi(\lambda)^*=e^{-2\pi i \lambda_x \cdot \lambda_\omega}\pi(-\lambda)$ for $\lambda=(\lambda_x,\lambda_\omega)$, and then that $\pi(-\lambda)$ is a unitary operator, we see that
$$\|S\pi(\lambda)\varphi_n\|_{L^2}^2=\|S\pi(-\lambda)^*\varphi_n\|_{L^2}^2=\|\pi(-\lambda)S\pi(-\lambda)^*\varphi_n\|_{L^2}^2.$$
Hence 
\begin{equation*}
  \tr(S^*S)=\sum_{n=1}^N\sum_{\lambda \in \Lambda} \|\alpha_{-\lambda} (S) \varphi_n\|_{L^2}^2=\sum_{n=1}^N\sum_{\lambda \in \Lambda} \|\alpha_{\lambda} (S) \varphi_n\|_{L^2}^2<\infty.
\end{equation*}
so $S^*S$ is a positive trace class operator, and $S$ is a Hilbert Schmidt operator.
\end{proof}

\subsection{Periodization of operators and $\beauty$}
To prepare for the next section on Fourier series of operators, we now consider the periodization of operators. The key to proving these results is \cite[Thm. 8.2]{Luef:2017}, which states that for $S\in \beauty$ and $T\in \tco$, the function $z\mapsto \tr(\alpha_z(S)T)\in M^1(\Rdd)$ with 
\begin{equation} \label{eq:qha}
  \|\tr(\alpha_z(S)T)\|_{M^1}\lesssim \|S\|_\beauty \|T\|_{\tco}
\end{equation}
and similarly for $S\in \tco$ and $T\in \beauty$ 
\begin{equation} \label{eq:qha2}
  \|\tr(\alpha_z(S)T)\|_{M^1}\lesssim \|S\|_\tco \|T\|_{\beauty}.
\end{equation}
\begin{proposition}[Operator periodization]\label{prop:feichtingerwindow}
	 The periodization map given by $S\mapsto \sum_{\lambda \in \Lambda} \alpha_\lambda (S)$ is a well-defined and bounded map $\beauty\to \bo$:
	\begin{equation*}
  \left\|\sum_{\lambda \in \Lambda} \alpha_\lambda (S) \right\|_{\bo}\lesssim \|S\|_{\beauty},
\end{equation*}
and a well-defined and bounded map $\tco\to \beast$:
\begin{equation*}
    \left\|\sum_{\lambda \in \Lambda} \alpha_\lambda (S) \right\|_{\beast}\lesssim \|S\|_{\tco}.
\end{equation*}
The sum $\sum_{\lambda \in \Lambda} \alpha_\lambda (S)$ converges in the weak* topology of $\bo$ when $S\in \beauty$, and in the weak* topology of $\beast$ when $S\in \tco$.
\end{proposition}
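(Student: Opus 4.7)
The plan is to reduce both claims to duality arguments: $\bo = (\tco)^*$ and $\beast = (\beauty)^*$, together with the quantum harmonic analysis estimates \eqref{eq:qha}, \eqref{eq:qha2} and the sampling inequality of Lemma~\ref{lem:wienersampling}. In both cases I will first build a bounded antilinear functional on the appropriate predual by summing $\tr(\alpha_\lambda(S)T^*)$ over $\lambda\in\Lambda$, using the $M^1$-bound combined with Wiener sampling to secure absolute convergence, and then identify this functional with the weak* limit of the partial sums $\sum_{\lambda\in F}\alpha_\lambda(S)$.

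For the first map, fix $S\in\beauty$ and $T\in\tco$. Since $\|T^*\|_{\tco}=\|T\|_{\tco}$, \eqref{eq:qha} gives $\tr(\alpha_\cdot(S)T^*)\in M^1(\Rdd)$ with $\|\tr(\alpha_\cdot(S)T^*)\|_{M^1}\lesssim \|S\|_{\beauty}\|T\|_{\tco}$, and Lemma~\ref{lem:wienersampling} then yields
\[
\sum_{\lambda\in\Lambda}\bigl|\tr(\alpha_\lambda(S)T^*)\bigr|\lesssim \|S\|_{\beauty}\|T\|_{\tco}.
\]
Thus $\Phi_S(T):=\sum_{\lambda\in\Lambda}\tr(\alpha_\lambda(S)T^*)$ defines an antilinear functional on $\tco$ of norm $\lesssim \|S\|_{\beauty}$, i.e.\ a unique element of $\bo$ of norm $\lesssim \|S\|_{\beauty}$. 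Since each $\alpha_\lambda(S)\in\beauty\subset\bo$, the partial sums $A_F:=\sum_{\lambda\in F}\alpha_\lambda(S)$ (indexed by finite $F\subset\Lambda$) are honest bounded operators, and the duality pairing \eqref{eq:traceclassduality} together with the absolute convergence above gives
\[
\inner{A_F}{T}_{\bo,\tco}=\sum_{\lambda\in F}\tr(\alpha_\lambda(S)T^*)\;\longrightarrow\;\Phi_S(T)\qquad(F\nearrow\Lambda),
\]
for every $T\in\tco$. This is precisely weak*-convergence of $A_F$ to $\Phi_S$ in $\bo$, yielding both the existence of the periodization as an element of $\bo$ and the claimed norm bound.

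For the second map I would argue symmetrically. Given $S\in\tco$ and $T\in\beauty$, part~(c) of Proposition~\ref{prop:innerkernel} gives $T^*\in\beauty$, and since the kernel of $T^*$ is $\overline{\kernel_T(y,x)}$ while $M^1_{v\otimes v}$ is invariant under complex conjugation and the flip $(x,y)\mapsto(y,x)$, we have $\|T^*\|_{\beauty}=\|T\|_{\beauty}$. Applying \eqref{eq:qha2} with $T^*$ in place of $T$ and then Lemma~\ref{lem:wienersampling} produces
\[
\sum_{\lambda\in\Lambda}\bigl|\tr(\alpha_\lambda(S)T^*)\bigr|\lesssim \|S\|_{\tco}\|T\|_{\beauty},
\]
so that $T\mapsto \sum_{\lambda\in\Lambda}\tr(\alpha_\lambda(S)T^*)$ is a bounded antilinear functional on $\beauty$ of norm $\lesssim \|S\|_{\tco}$. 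By \eqref{eq:dualityaction4} this functional is exactly the pairing $\inner{\sum_\lambda \alpha_\lambda(S)}{T}_{\beast,\beauty}$ in the limit of partial sums, so again the weak*-limit in $\beast$ exists, lies in $\beast$, and has the stated norm.

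The only genuinely technical point is the identification of the weak* limit with the functional defined via absolute sums; this is immediate once one observes that partial sums $A_F$ are in the relevant space and that $\inner{A_F}{T}$ converges unconditionally for each $T$ in the predual. Everything else is a direct assembly of \eqref{eq:qha}/\eqref{eq:qha2}, Lemma~\ref{lem:wienersampling}, Proposition~\ref{prop:innerkernel}(c), and the duality pairings \eqref{eq:traceclassduality} and \eqref{eq:dualityaction4}.
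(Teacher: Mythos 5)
Your argument is correct and follows essentially the same route as the paper: define the periodization as an antilinear functional on the predual via $\sum_\lambda \tr(\alpha_\lambda(S)T^*)$, use \eqref{eq:qha} (resp. \eqref{eq:qha2}) together with Lemma~\ref{lem:wienersampling} to get absolute convergence and the norm bound, and identify the weak* limit of the partial sums through \eqref{eq:traceclassduality} and \eqref{eq:dualityaction4}. Your explicit remark that $T^*\in\beauty$ with the same norm (needed to apply \eqref{eq:qha2} to $T^*$) is a detail the paper leaves implicit, but otherwise the two proofs coincide.
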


\begin{proof}
Let $S\in \beauty$. Since $\bo$ is the dual space of $\tco$ \cite[Thm. 3.13]{Busch:2016}, we define $\sum_{\lambda\in \Lambda} \alpha_\lambda(S)\in \bo$ by duality, by defining its action as an antilinear functional:
\begin{equation*} 
  \inner{\sum_{\lambda\in \Lambda} \alpha_\lambda(S)}{T}_{\bo,\tco}:=\sum_{\lambda\in \Lambda}  \inner{\alpha_\lambda(S)}{T}_{\bo,\tco} \quad \text{ for } T\in \tco.
\end{equation*}
To see that this defines a \textit{bounded} antilinear functional on $\tco$, we estimate that
\begin{align*}
  \left|\sum_{\lambda\in \Lambda}  \inner{\alpha_\lambda(S)}{T}_{\bo,\tco} \right| &\leq \sum_{\lambda\in \Lambda}   \left| \inner{\alpha_\lambda(S)}{T}_{\bo,\tco} \right| \\
  &= \sum_{\lambda\in \Lambda}   \left| \tr(\alpha_\lambda(S)T^*) \right| \quad \text{ by \eqref{eq:traceclassduality}} \\
  &\lesssim  \| \tr(\alpha_z(S)T^*)\|_{M^1} \quad \text{ by Lemma } \ref{lem:wienersampling} \\
  &\lesssim \|S\|_{\beauty} \|T\|_{\tco} \quad \text{ by } \eqref{eq:qha}. 
\end{align*}
It is clear that the partial sums converge to this element $\sum_{\lambda \in \Lambda} \alpha_\lambda (S)$ in the weak* topology of $\bo$: For any finite subset $J\subset \Lambda$ we get 
 \begin{equation*}
  \inner{\sum_{\lambda \in \Lambda} \alpha_\lambda (S)-\sum_{\lambda \in J} \alpha_\lambda (S)}{T}_{\bo,\tco}=\sum_{\lambda \in \Lambda \setminus J}  \inner{\alpha_\lambda (S)}{T}_{\bo,\tco},
\end{equation*}
and we showed above that the sum $\sum_{\lambda \in \Lambda}  \inner{\alpha_\lambda(S)}{T}_{\bo,\tco}$ converges absolutely. 
Then let $S\in \tco$. We define $\sum_{\lambda \in \Lambda}\alpha_\lambda(S)\in \beast$ by duality: 
\begin{equation*}  
  \inner{\sum_{\lambda\in \Lambda} \alpha_\lambda(S)}{T}_{\beast,\beauty}:=\sum_{\lambda\in \Lambda}  \inner{\alpha_\lambda(S)}{T}_{\beast,\beauty} \quad \text{ for } T\in \beauty.
\end{equation*}
The estimate showing that this defines a bounded antilinear functional on $\beauty$ with $\left|\sum_{\lambda\in \Lambda} \inner{\alpha_\lambda(S)}{T}_{\beast,\beauty}\right|\lesssim \|S\|_{\tco} \|T\|_{\beauty}$ is the same as above using \eqref{eq:qha2}, but note that we need to write $\inner{\alpha_\lambda(S)}{T}_{\beast,\beauty}=\tr(\alpha_\lambda(S)T^*)$ to use \eqref{eq:qha2} -- this is true by \eqref{eq:dualityaction4}. 
\end{proof}

\begin{corollary} \label{cor:beautyupperbound}
	If $S^*S\in \beauty$ then $\{\alpha_\lambda(S)\}_{\lambda\in \Lambda}$ satisfies  the upper g-frame bound 
	\begin{equation*}
  \sum_{\lambda \in \Lambda} \|\alpha_\lambda (S) \psi\|_{L^2}^2 \lesssim \|S^*S\|_{\beauty} \|\psi\|_{L^2}^2 \quad \text{ for all } \psi \in L^2(\Rd).
\end{equation*}
In particular, this is true if $S\in \beauty$.
\end{corollary}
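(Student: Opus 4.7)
The main idea is to bring the sum $\sum_\lambda \|\alpha_\lambda(S)\psi\|_{L^2}^2$ into a form where Proposition \ref{prop:feichtingerwindow} applies directly to the periodization of the ``beauty'' operator $S^*S$. Using \eqref{eq:translateproduct} together with $\alpha_\lambda(S)^* = \alpha_\lambda(S^*)$ (since $\pi(\lambda)$ is unitary), each term can be rewritten as
\[
\|\alpha_\lambda(S)\psi\|_{L^2}^2 = \inner{\alpha_\lambda(S^*S)\psi}{\psi}_{L^2}.
\]
This is the crucial algebraic step; the rest of the argument is bookkeeping with the duality pairings developed in Section \ref{sec:operators}.

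Next, I would recognize $\inner{T\psi}{\psi}_{L^2} = \tr\bigl(T(\psi\otimes\psi)\bigr) = \inner{T}{\psi\otimes\psi}_{\bo,\tco}$ for any $T \in \bo$, using \eqref{eq:traceclassduality} together with the self-adjointness of the rank-one operator $\psi\otimes\psi$. Since $S^*S \in \beauty$ by assumption, Proposition \ref{prop:feichtingerwindow} guarantees that $\sum_\lambda \alpha_\lambda(S^*S)$ converges in the weak* topology of $\bo$ to a bounded operator of norm $\lesssim \|S^*S\|_\beauty$. Testing this weak*-convergent series against the trace class operator $\psi\otimes\psi$ allows me to interchange the sum and the pairing:
\[
\sum_{\lambda\in\Lambda}\|\alpha_\lambda(S)\psi\|_{L^2}^2 = \Bigl\langle \sum_{\lambda\in\Lambda}\alpha_\lambda(S^*S),\, \psi\otimes\psi\Bigr\rangle_{\bo,\tco}.
\]
Bounding the right-hand side by $\bigl\|\sum_\lambda \alpha_\lambda(S^*S)\bigr\|_{\bo}\cdot\|\psi\otimes\psi\|_{\tco}$ and using $\|\psi\otimes\psi\|_\tco = \|\psi\|_{L^2}^2$ yields the desired upper g-frame bound.

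For the ``in particular'' statement, if $S\in\beauty$ then by part (c) of Proposition \ref{prop:innerkernel} the adjoint $S^*$ also lies in $\beauty$; in fact the kernel of $S^*$ is $\overline{\kernel_S(y,x)}$, which has the same $M^1_{v\otimes v}$-norm as $\kernel_S$ since the space is invariant under complex conjugation and under the coordinate swap (the latter using the symmetry of $v$). Corollary \ref{cor:composition} then gives $\|S^*S\|_\beauty \lesssim \|S\|_\beauty \|S^*\|_\beauty = \|S\|_\beauty^2$, which combined with the main bound completes the proof. I do not foresee any genuine obstacle here: every ingredient has already been established earlier in the paper, and the proof is essentially a careful chaining of the identifications \eqref{eq:translateproduct}, \eqref{eq:traceclassduality}, Proposition \ref{prop:feichtingerwindow}, and Corollary \ref{cor:composition}.
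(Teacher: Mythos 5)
Your proof is correct and follows essentially the same route as the paper: both reduce to testing the periodization $\sum_\lambda \alpha_\lambda(S^*S)$ from Proposition \ref{prop:feichtingerwindow} against the rank-one trace class operator $\psi\otimes\psi$, using $\|\alpha_\lambda(S)\psi\|_{L^2}^2=\inner{\alpha_\lambda(S^*S)\psi}{\psi}_{L^2}$ and $\|\psi\otimes\psi\|_{\tco}=\|\psi\|_{L^2}^2$. The only (cosmetic) difference is that you invoke the operator-norm conclusion of that proposition and then the duality bound, whereas the paper reuses the absolute-summability estimate from inside its proof; both yield the same constant, and your handling of the ``in particular'' part matches the paper's.
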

\begin{proof}
We observed in the proof above (now with $S^*S$ instead of $S$) that 
\begin{equation} \label{eq:proofupperbound}
	\sum_{\lambda\in \Lambda}   \left| \inner{\alpha_\lambda(S^*S)}{T}_{\bo,\tco} \right|\lesssim \|S^*S\|_{\beauty} \|T\|_{\tco}.
\end{equation}
 If $T=\psi \otimes \psi$, it is simple to show that $\inner{\alpha_\lambda(S^*S)}{T}_{\bo,\tco} =\inner{\alpha_\lambda(S^*S)\psi}{\psi}_{L^2}$ and $\|T\|_{\tco}=\|\psi\|_{L^2}^2$. Therefore equation \eqref{eq:proofupperbound} says that 
 \begin{equation*}
	\sum_{\lambda\in \Lambda}   \left| \inner{\alpha_\lambda(S^*S)\psi}{\psi}_{L^2} \right|\lesssim \|S^*S\|_{\beauty} \|\psi\|_{L^2}^2.
\end{equation*}
 As we have seen, $\inner{\alpha_{\lambda} (S^*S)\psi}{\psi}_{L^2} =\|\alpha_\lambda (S) \psi\|_{L^2}^2$, which completes the proof of the first part. If $S\in \beauty$, it follows from Proposition \ref{prop:innerkernel} and Corollary \ref{cor:composition} that $S^*S\in \beauty$.
\end{proof}
The fact that we only need $S^*S\in \beauty$ is useful in light of our treatment of multi-window Gabor frames in Example \ref{example:multiwindow}. To a system $\{\phi_n\}_{n=1}^N\subset M^1(\Rd)$ we associated the operator $$S=\sum_{n=1}^N \xi_n \otimes \phi_n,$$ where $\{\xi_n\}_{n=1}^N$ is an arbitrary orthonormal set in $L^2(\Rd)$. Hence we do not necessarily have $S\in \beauty$, yet $S^*S=\sum_{n=1}^N \phi_n\otimes \phi_n \in \beauty.$ A version of this corollary for Gabor frames is well-known \cite[Thm. 12.2.3]{Grochenig:2001}. 

\section{Fourier series of operators: the Janssen representation} \label{sec:janssen}
A key insight of Werner's paper \cite{Werner:1984} is that the Fourier-Wigner transform in many respects behaves as a Fourier transform for operators. Given a lattice $\Lambda \subset \Rdd$, this leads to a natural question: if an operator is in some sense $\Lambda$-periodic, can we find a Fourier series expansion of the operator? In fact, $\Lambda$-periodic operators were studied in \cite{Feichtinger:1998}, where an operator $S$ was said to be $\Lambda$-periodic if
\begin{equation*}
  \alpha_\lambda(S)=S \quad \text{ for any } \lambda \in \Lambda.
\end{equation*}
An important tool in \cite{Feichtinger:1998} is the \textit{adjoint lattice} $\Lambda^\circ$ of $\Lambda$, defined by
\begin{align*}
  \Lambda^\circ&=\{\lambda^\circ \in \Rdd : \pi(\lambda^\circ)\pi(\lambda)= \pi(\lambda)\pi(\lambda^\circ) \text{ for any } \lambda \in \Lambda\} \\
  &= \{\lambda^\circ \in \Rdd : e^{2\pi i \sigma(\lambda^\circ,\lambda)}=1 \text{ for any } \lambda \in \Lambda\}, 
\end{align*}
where $\sigma$ is the standard symplectic form.
It is shown in \cite{Feichtinger:1998} that $\Lambda^\circ$ is a lattice, and $|\Lambda^\circ|=\frac{1}{|\Lambda|}.$ One can interpret $\Lambda^\circ$ using abstract harmonic analysis. Identify the dual group $\widehat{\R^{2d}}$ with $\R^{2d}$ by the bijection $\R^{2d} \ni z\mapsto \chi_z\in \widehat{\R^{2d}}$, where $\chi_z$ is the \textit{symplectic} character $\chi_{z}(z')=e^{2\pi i \sigma(z,z')}$. With this identification, we see that
\begin{equation*}
  \Lambda^\circ= \{\lambda^\circ \in \Rdd : \chi_{\lambda^\circ}(\lambda)=1 \text{ for any } \lambda \in \Lambda\}
\end{equation*}
Hence $\Lambda^\circ$ is the annihilator of $\Lambda$, and $\Lambda^\circ$ can therefore be identified with the dual group of $\Rdd/\Lambda$ \cite[Prop. 3.6.1]{Deitmar:2014}. By abstract harmonic analysis, this implies that any well-behaved $\Lambda$-periodic \textit{function} $f$ on $\Rdd$ can be expanded in a \textit{symplectic Fourier series} 
\begin{equation*} 
f(z)=    \sum_{\lambda^\circ \in \Lambda^\circ} c_{\lambda^\circ}e^{2\pi i \sigma(\lambda^\circ,z)}, 
\end{equation*}
and we will refer to $\{c_{\lambda^\circ}\}_{\lambda^\circ \in \Lambda^\circ}$ as the \textit{symplectic Fourier coefficients} of $f$.

\begin{remark} 
	The main results of this section, namely Theorems \ref{theorem:janssen}, \ref{theorem:fourierseriesdual} and \ref{theorem:wexlerraz}, are due to Feichtinger and Kozek \cite{Feichtinger:1998}. The spirit of our approach is also the same as in \cite{Feichtinger:1998} -- we express operators as linear combinations of time-frequency shifts by applying methods from abstract harmonic analysis to their symbol with respect to some pseudodifferential calculus. Since the results form a natural and important part of the theory of Gabor g-frames, we choose to include detailed proofs. Our proofs differ slightly from those in \cite{Feichtinger:1998} by using the Weyl symbol (rather than Kohn-Nirenberg symbol), which makes it particularly transparent that the Janssen representation is a Fourier series of operators (see Lemma \ref{lem:weylsymboltf}). This fits well with our interpretation of $\F_W$ as a Fourier transform. We also extend the results of \cite{Feichtinger:1998} to trace class operators. 

\end{remark}

As our function and operator spaces we will use $M^1(\Rdd)$ and $\beauty$ along with their duals. In the following lemma $A(\Rdd/\Lambda)$ denotes the $\Lambda$-periodic functions $f:\Rdd \to \C$ with symplectic Fourier coefficients $\{c_{\lambda^\circ}\}_{\lambda^\circ \in \Lambda^\circ}$ in $\ell^1(\Lambda^\circ)$, with norm $$\|f\|_{A(\Rdd/\Lambda)}:=\|\{c_{\lambda^\circ}\}\|_{\ell^1(\Lambda^\circ)}.$$ $A'(\Rdd/\Lambda)$  denotes its dual space of distributions with symplectic Fourier coefficients in $\ell^\infty(\Lambda^\circ)$. The proofs of the two parts of the next lemma can be found in \cite[Thm. 7]{Feichtinger:1981} and \cite[Prop. 13]{Keville:2003}, respectively.

\begin{lemma} \label{lem:periodization}
	Let $\Lambda$ be a lattice and $P_\Lambda$ be the periodization operator 
	\begin{equation*}
  P_\Lambda f=\sum_{\lambda \in \Lambda} T_\lambda (f) \quad  \text{ for } f\in M^1(\Rdd).
\end{equation*}
\begin{enumerate}[(a)]
	\item $P_\Lambda$ is bounded and surjective from $M^1(\Rdd)$ onto $A(\Rdd/\Lambda)$.
	\item The range of the Banach space adjoint operator $P_\Lambda^*:A'(\Rdd/\Lambda)\to M^\infty(\Rdd)$ is the set of $\Lambda$-periodic elements of $M^\infty(\Rdd)$.
\end{enumerate}
\end{lemma}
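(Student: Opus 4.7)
The plan is to establish (a) by combining the Poisson summation formula for $M^1$ with a bounded uniform partition of unity (BUPU) in $M^1$, and then deduce (b) from standard Banach space duality applied to the surjection $P_\Lambda$ together with a characterization of $\ker P_\Lambda$.

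For the boundedness half of (a), I would invoke the Poisson summation formula for $M^1$, which gives
\begin{equation*}
P_\Lambda f(z) = \frac{1}{|\Lambda|} \sum_{\lambda^\circ \in \Lambda^\circ} \F_\sigma f(\lambda^\circ)\, e^{2\pi i \sigma(\lambda^\circ, z)}
\quad \text{for } f\in M^1(\Rdd).
\end{equation*}
Hence the symplectic Fourier coefficients of $P_\Lambda f$ are $c_{\lambda^\circ} = |\Lambda|^{-1} \F_\sigma f(\lambda^\circ)$. Since $\F_\sigma$ is an isomorphism of $M^1(\Rdd)$, and sampling $M^1$-functions on a lattice is bounded into $\ell^1$ by Lemma~\ref{lem:wienersampling}, we obtain $\|P_\Lambda f\|_{A(\Rdd/\Lambda)} = |\Lambda|^{-1} \|\F_\sigma f|_{\Lambda^\circ}\|_{\ell^1(\Lambda^\circ)} \lesssim \|\F_\sigma f\|_{M^1} \lesssim \|f\|_{M^1}$. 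For surjectivity, I would construct $\phi \in M^1$ with $P_\Lambda \phi \equiv 1$; by Poisson summation this reduces to choosing $\phi$ with $\F_\sigma \phi(0) = |\Lambda|$ and $\F_\sigma \phi = 0$ on $\Lambda^\circ \setminus \{0\}$, achievable by taking $\F_\sigma \phi$ to be a smooth compactly supported bump near the origin avoiding the nonzero points of $\Lambda^\circ$. Given $g \in A(\Rdd/\Lambda)$ with Fourier series $g = \sum_{\lambda^\circ} c_{\lambda^\circ} e^{2\pi i \sigma(\lambda^\circ,\cdot)}$, set $f = g\phi$. Because modulations act as isometries on $M^1(\Rdd)$, we get $\|f\|_{M^1} \leq \sum_{\lambda^\circ} |c_{\lambda^\circ}|\|\phi\|_{M^1} = \|g\|_{A(\Rdd/\Lambda)}\|\phi\|_{M^1}$, so $f \in M^1$, and pointwise $P_\Lambda f(z) = g(z)\sum_\lambda \phi(z-\lambda) = g(z)$ using the $\Lambda$-periodicity of $g$.

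For part (b), I would invoke the standard duality principle: if $T: X \to Y$ is a bounded surjection between Banach spaces, then $T^*: Y' \to X'$ is injective with range $(\ker T)^\perp$ (an immediate consequence of the open mapping theorem). Applied to $P_\Lambda$, this identifies the range of $P_\Lambda^*$ with $(\ker P_\Lambda)^\perp \subset M^\infty(\Rdd)$. It remains to show that $(\ker P_\Lambda)^\perp$ equals the $\Lambda$-periodic elements of $M^\infty$. For the inclusion "$\subseteq$", note that for any $f \in M^1$ and $\lambda_0 \in \Lambda$ the shift difference $f - T_{\lambda_0} f$ lies in $\ker P_\Lambda$, since $P_\Lambda$ is invariant under $\Lambda$-translations by reindexing; consequently any $\mu \in (\ker P_\Lambda)^\perp$ satisfies $\inner{\mu}{f}_{M^\infty,M^1} = \inner{\mu}{T_{\lambda_0} f}_{M^\infty,M^1} = \inner{T_{-\lambda_0}\mu}{f}_{M^\infty,M^1}$ for all $f$, forcing $T_{-\lambda_0}\mu = \mu$. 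For "$\supseteq$", given a $\Lambda$-periodic $\mu \in M^\infty$ and $f \in \ker P_\Lambda$, decompose $f = \sum_{\lambda \in \Lambda} (T_\lambda \phi) f$ using the BUPU $\phi$ from part (a) (which converges in $M^1$), and use $\Lambda$-periodicity of $\mu$ to rearrange
\begin{equation*}
\inner{\mu}{f}_{M^\infty,M^1} = \sum_{\lambda} \inner{\mu}{(T_\lambda \phi)\, f}_{M^\infty,M^1} = \sum_\lambda \inner{\mu}{\phi \cdot T_{-\lambda} f}_{M^\infty,M^1} = \inner{\mu}{\phi \cdot P_\Lambda f}_{M^\infty,M^1} = 0.
\end{equation*}

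The main technical obstacle is the convergence argument in the "$\supseteq$" direction of (b): one must justify that $\sum_\lambda (T_\lambda \phi) f$ converges in the $M^1$-norm so that the sum can be pulled through the duality bracket, and that multiplication by the locally finite sum $\sum_\lambda T_\lambda \phi \equiv 1$ behaves well against $f \in M^1$. Both facts are standard properties of the Feichtinger algebra (it is a pointwise Banach algebra and enjoys good BUPU decompositions), but they are the nontrivial analytic input underlying the result. Everything else reduces to Poisson summation plus abstract duality.
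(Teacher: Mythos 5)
Your argument is essentially correct, but it is worth noting that the paper does not prove this lemma at all: it simply cites \cite[Thm.~7]{Feichtinger:1981} for part (a) and \cite[Prop.~13]{Keville:2003} for part (b). So you have supplied a self-contained proof where the paper defers to the literature. Your route for (a) -- reading off the symplectic Fourier coefficients of $P_\Lambda f$ via Poisson summation and then sampling $\F_\sigma f$ on $\Lambda^\circ$ with Lemma~\ref{lem:wienersampling}, plus the multiplier trick $g\mapsto g\phi$ with $P_\Lambda\phi\equiv 1$ for surjectivity -- and for (b) -- the closed-range/annihilator identity $\operatorname{ran}(P_\Lambda^*)=(\ker P_\Lambda)^\perp$ followed by the identification of $(\ker P_\Lambda)^\perp$ with the $\Lambda$-periodic distributions -- is a clean and standard way to recover both statements, and it has the advantage of making visible exactly which structural facts about $M^1$ are used.

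Two caveats. First, within the paper's logical ordering the Poisson summation formula is stated \emph{after} this lemma and is partly justified by it (``Since $P_\Lambda f$ has absolutely summable symplectic Fourier coefficients \dots by Lemma~\ref{lem:periodization}''), so to avoid circularity you must invoke the Poisson summation formula for $M^1=S_0$ as an independent classical result (which it is; the absolute convergence of both sides is part of that statement) rather than the paper's proposition. Second, the step you yourself flag as the main obstacle -- the absolute convergence in $M^1$ of $\sum_\lambda (T_\lambda\phi)f$ and of $\sum_\lambda \phi\cdot T_{-\lambda}f$ -- is a genuine analytic input: the crude submultiplicative bound $\|(T_\lambda\phi)f\|_{M^1}\lesssim\|\phi\|_{M^1}\|f\|_{M^1}$ does not sum over $\lambda$, and what is really needed is the Wiener-amalgam characterization $M^1=W(\F L^1,\ell^1)$, which gives $\sum_\lambda\|(T_\lambda\phi)f\|_{M^1}\lesssim\|\phi\|_{M^1}\|f\|_{M^1}$. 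This is standard for the Feichtinger algebra but should be cited or proved explicitly, since it is precisely the content that makes $S_0$ (rather than, say, $L^1\cap\F L^1$) the right space here; once it is in place, the rest of your argument, including the $M^1$-limit of $\sum_\lambda\phi\cdot T_{-\lambda}f$ being the pointwise limit $\phi\cdot P_\Lambda f=0$, goes through.
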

\begin{remark}
\begin{enumerate}[(a)]
	\item A distribution $f\in M^\infty(\Rdd)$ is $\Lambda$-periodic if $T_\lambda (f)=f$ for any $\lambda \in \Lambda$, where $T_\lambda (f)$ is defined by $$\inner{T_\lambda (f)}{g}_{M^\infty,M^1}:=\inner{f}{T_{-\lambda}(g)}_{M^\infty,M^1}$$ for $g\in M^1(\Rdd).$
	\item If $q:\Rdd\to \Rdd/\Lambda$ denotes the quotient map, then a simple calculation using Weil's formula \cite[(6.2.11)]{Grochenig:1998} shows that $P_\Lambda^*(f)=\frac{1}{|\Lambda|}\cdot f\circ q$ for $f\in A(\Rdd/\Lambda)$. 
\end{enumerate}
	
\end{remark}

Since $P_\Lambda f$ has absolutely summable symplectic Fourier coefficients when $f \in M^1(\Rdd)$ by Lemma \ref{lem:periodization}, we can use Poisson's summation formula to find its symplectic Fourier coefficients, see \cite[Example 5.11]{Jakobsen:2018} or \cite[Thm. 3.6.3]{Deitmar:2014} for a proof.

\begin{proposition}[Poisson summation formula]
	Let $f \in M^1(\Rdd)$. The symplectic Fourier coefficients of $P_\Lambda f$ are $\{\frac{1}{|\Lambda|}\F_\sigma(f)(\lambda^\circ)\}_{\lambda^\circ \in \Lambda^\circ}$, i.e.
	\begin{equation*}
  P_\Lambda f (z)=\frac{1}{|\Lambda|} \sum_{\lambda^\circ \in \Lambda^\circ} \F_\sigma(f)(\lambda^\circ)e^{2\pi i \sigma (\lambda^\circ,z)}.
\end{equation*}

\end{proposition}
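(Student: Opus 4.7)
The strategy is to exploit the two conclusions of Lemma \ref{lem:periodization} together: since $f\in M^1(\Rdd)$, the periodization $P_\Lambda f$ lies in $A(\Rdd/\Lambda)$, so by definition of this space it has absolutely summable symplectic Fourier coefficients, i.e.\ a convergent expansion
\begin{equation*}
  P_\Lambda f(z) = \sum_{\lambda^\circ \in \Lambda^\circ} c_{\lambda^\circ}\, e^{2\pi i \sigma(\lambda^\circ,z)}.
\end{equation*}
The only remaining task is therefore to identify the coefficients $c_{\lambda^\circ}$ via the usual orthogonality integral over $\Rdd/\Lambda$ and to show this integral equals $\tfrac{1}{|\Lambda|}\F_\sigma(f)(\lambda^\circ)$.

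Concretely, I would pick a fundamental domain $D\subset \Rdd$ for the action of $\Lambda$, recall that the normalized Haar measure on $\Rdd/\Lambda$ is $\tfrac{1}{|\Lambda|}$ times Lebesgue measure on $D$, and write
\begin{equation*}
  c_{\lambda^\circ} = \int_{\Rdd/\Lambda} P_\Lambda f(z)\, \overline{e^{2\pi i \sigma(\lambda^\circ,z)}}\, d\mu(z)
  = \frac{1}{|\Lambda|} \int_{D} \sum_{\lambda\in \Lambda} f(z-\lambda)\, e^{-2\pi i \sigma(\lambda^\circ,z)}\, dz.
\end{equation*}
Interchanging the sum and integral (justified in the next step), translating $z\mapsto z+\lambda$, and using the defining property $e^{2\pi i \sigma(\lambda^\circ,\lambda)}=1$ of the adjoint lattice to absorb the phase, the sum of integrals over $D-\lambda$ collapses to a single integral over $\Rdd$, yielding
\begin{equation*}
  c_{\lambda^\circ} = \frac{1}{|\Lambda|} \int_{\Rdd} f(w)\, e^{-2\pi i \sigma(\lambda^\circ,w)}\, dw = \frac{1}{|\Lambda|}\,\F_\sigma(f)(\lambda^\circ).
\end{equation*}
Substituting this back into the Fourier series produces the claimed identity.

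The main obstacle is making the interchange of sum and integral rigorous, which reduces to showing that $\sum_{\lambda\in \Lambda}|f(\cdot - \lambda)|\in L^1(D)$. This follows because $M^1(\Rdd)\hookrightarrow L^1(\Rdd)$, so $f\in L^1(\Rdd)$, and then
\begin{equation*}
  \int_{D} \sum_{\lambda\in \Lambda}|f(z-\lambda)|\,dz = \sum_{\lambda\in \Lambda}\int_{D-\lambda}|f(w)|\,dw = \|f\|_{L^1(\Rdd)} < \infty,
\end{equation*}
by Tonelli and the disjointness of translates of $D$, so Fubini applies. Once this integrability is in place, the computation above is routine. A secondary point worth verifying is the identification of the correct orthogonality relation: the characters $z\mapsto e^{2\pi i \sigma(\lambda^\circ,z)}$ for $\lambda^\circ\in \Lambda^\circ$ form an orthonormal basis of $L^2(\Rdd/\Lambda)$ with respect to the normalized measure, which is built into the identification of $\Lambda^\circ$ with $\widehat{\Rdd/\Lambda}$ noted just before the statement. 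With these two ingredients the proof is essentially a direct computation.
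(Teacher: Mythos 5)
Your proposal is correct. The paper itself does not prove this proposition but defers to \cite[Example 5.11]{Jakobsen:2018} and \cite[Thm. 3.6.3]{Deitmar:2014}; your argument (periodize, unfold the orthogonality integral over a fundamental domain, use $e^{2\pi i\sigma(\lambda^\circ,\lambda)}=1$ and the tiling of $\Rdd$ by $\{D-\lambda\}$, with Fubini justified by $M^1(\Rdd)\hookrightarrow L^1(\Rdd)$) is exactly the standard computation those references carry out, and Lemma \ref{lem:periodization}(a) correctly supplies the absolute convergence of the resulting series.
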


 To use this to obtain Fourier series of operators, we need the following simple lemma \cite[Prop. 198]{deGosson:2011}.
\begin{lemma} \label{lem:weylsymboltf}
	For any $z=(x,\omega)\in \Rdd$, the Weyl symbol of $e^{- \pi i x \cdot \omega}\pi(z)$  is the function $z'\mapsto e^{2\pi i \sigma (z,z')} $.
\end{lemma}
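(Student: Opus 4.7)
The plan is to verify the Weyl-transform duality directly. By the kernel theorem for modulation spaces, the Weyl symbol of an operator is uniquely determined by the pairing identity
\[
  \inner{L_f \eta}{\xi}_{M^\infty,M^1} = \inner{f}{W(\xi,\eta)}_{M^\infty,M^1} \quad \text{for all } \xi,\eta \in M^1(\Rd),
\]
so with $z=(x,\omega)$ and $f(z')=e^{2\pi i \sigma(z,z')}$ it suffices to show
\[
  \inner{e^{-\pi i x\cdot \omega}\pi(z)\eta}{\xi}_{L^2} \;=\; \int_{\Rdd} e^{2\pi i \sigma(z,z')}\,\overline{W(\xi,\eta)(z')}\,dz'.
\]
The integral on the right is absolutely convergent, since $W(\xi,\eta)\in M^1(\Rdd)\hookrightarrow L^1(\Rdd)$ for $\xi,\eta \in M^1(\Rd)$ and $|e^{2\pi i \sigma(z,z')}|=1$. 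Using the elementary relation $\overline{W(\xi,\eta)}=W(\eta,\xi)$, I would rewrite the right side as $\int e^{2\pi i \sigma(z,z')}W(\eta,\xi)(z')\,dz'$.

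The core of the argument is then a direct unfolding. Writing $z'=(x',\omega')$ and $\sigma(z,z')=\omega\cdot x'-\omega'\cdot x$, I would insert the explicit formula for $W(\eta,\xi)$ and interchange integrals by Fubini, performing the $\omega'$-integration first. The $\omega'$-integral produces $\int e^{-2\pi i \omega' \cdot (t+x)}\,d\omega'=\delta(t+x)$, which collapses the $t$-integral to $t=-x$, yielding $\int e^{2\pi i \omega \cdot x'}\eta(x'-x/2)\overline{\xi(x'+x/2)}\,dx'$. A change of variables $u=x'+x/2$ converts this to
\[
  e^{-\pi i x\cdot \omega}\int_{\Rd} e^{2\pi i \omega \cdot u}\,\eta(u-x)\,\overline{\xi(u)}\,du.
\]
On the other hand, the left-hand side unfolds trivially from $\pi(z)\eta(u)=e^{2\pi i \omega \cdot u}\eta(u-x)$ to the very same expression, closing the identity.

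The only delicate step is justifying the $\omega'$-integration, which is a distributional Fourier inversion. This poses no real obstacle: for $\xi,\eta\in\mathcal{S}(\Rd)$ the triple integrand is Schwartz in $(x',\omega',t)$, so the interchange of integrations and the partial Fourier inversion are classical, and the resulting identity extends to all $\xi,\eta\in M^1(\Rd)$ by the continuity of the cross-Wigner map $W:M^1(\Rd)\times M^1(\Rd)\to M^1(\Rdd)$ together with the density of $\mathcal{S}(\Rd)$ in $M^1(\Rd)$. A shorter but less self-contained alternative would be to observe that $W(\xi,\eta)$ is itself the Weyl symbol of $\xi\otimes\eta$, so the right-hand side equals $\F_\sigma(W(\xi,\eta))(-z)=\F_W(\xi\otimes\eta)(-z)$, and then invoke \eqref{eq:fwsymp} and \eqref{eq:fwrankone} to reproduce the prefactor $e^{-\pi i x\cdot\omega}$ via $V_\eta\xi(-z)=\inner{\xi}{\pi(-z)\eta}_{L^2}$; however, the direct calculation above is more transparent.
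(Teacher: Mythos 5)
Your proof is correct, but note that the paper does not actually prove this lemma at all: it is imported as a citation to de Gosson (Prop.~198 of \cite{deGosson:2011}), so there is no in-paper argument to compare against. Your direct verification is a perfectly good self-contained substitute. The logical skeleton is sound: the Weyl symbol is uniquely determined by the defining pairing $\inner{L_f\eta}{\xi}_{M^\infty,M^1}=\inner{f}{W(\xi,\eta)}_{M^\infty,M^1}$ together with the kernel theorem, the identity $\overline{W(\xi,\eta)}=W(\eta,\xi)$ is elementary, the partial Fourier inversion in $\omega'$ correctly collapses the $t$-integral to $t=-x$, and the shift $u=x'+x/2$ is exactly what produces the phase $e^{-\pi i x\cdot\omega}$ matching $\pi(z)\eta(u)=e^{2\pi i\omega\cdot u}\eta(u-x)$. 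Justifying the distributional step on $\mathcal{S}(\Rd)$ and extending by density using the continuity of $W:M^1(\Rd)\times M^1(\Rd)\to M^1(\Rdd)$ is the standard and correct way to make this rigorous. One small slip in your closing aside: after rewriting the right-hand side with $\overline{W(\xi,\eta)}=W(\eta,\xi)$, the quantity you obtain is $\F_\sigma(W(\eta,\xi))(-z)=\F_W(\eta\otimes\xi)(-z)$, not $\F_W(\xi\otimes\eta)(-z)$; with \eqref{eq:fwrankone} this gives $e^{\pi i x\cdot\omega}V_\xi\eta(-z)=e^{\pi i x\cdot\omega}\inner{\eta}{\pi(-z)\xi}_{L^2}$, which still reduces to the left-hand side, so the alternative route works once the indices are straightened out. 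This does not affect the main computation, which stands on its own.
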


We will now consider Fourier series of operators arising as periodizations of operators in $\beauty$, in other words a Poisson summation formula for operators. The second part of the result extends Janssen's representation of multi-window Gabor frame operators to Gabor g-frame operators. As mentioned, this result is due to \cite{Feichtinger:1998} who used it to prove the Janssen representation for multi-window Gabor frames. In this and following statements, we use the notation $\lambda^\circ = (\lambda^\circ_x,\lambda^\circ_\omega)$ to denote the elements of $\Lambda^\circ$. 

\begin{theorem}[Janssen's representation of Gabor g-frame operators] \label{theorem:janssen}
	Let $S\in \beauty$ and $\Lambda$ a lattice. Then 
	\begin{equation*}
  \sum_{\lambda \in \Lambda} \alpha_\lambda(S)=\frac{1}{|\Lambda|} \sum_{\lambda^\circ \in \Lambda^\circ} \F_W(S)(\lambda^\circ)e^{-\pi i \lambda^\circ_x\cdot \lambda^\circ_\omega} \pi(\lambda^\circ).
\end{equation*}
In particular,
\begin{equation*}
  \frameop_S=\frac{1}{|\Lambda|} \sum_{\lambda^\circ \in \Lambda^\circ} \F_W(S^*S)(\lambda^\circ)e^{-\pi i \lambda^\circ_x\cdot \lambda^\circ_\omega} \pi(\lambda^\circ).
\end{equation*}
Moreover, if $S\in \beauty_{v\otimes v}$, then $\{\F_W(S)(\lambda^\circ)\}_{\lambda^\circ \in \Lambda^\circ}\in \ell^1_{v}(\Lambda^\circ).$ 
\end{theorem}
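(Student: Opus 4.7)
The plan is to reduce the operator identity to a function identity at the level of Weyl symbols, where the classical Poisson summation formula can be applied. Because $S\in\beauty$, Proposition \ref{prop:weak*} gives $\weyl_S\in M^1(\Rdd)$, so Lemma \ref{lem:periodization}(a) ensures that the periodization $P_\Lambda \weyl_S\in A(\Rdd/\Lambda)$. The (symplectic) Poisson summation formula then yields
\begin{equation*}
  P_\Lambda \weyl_S(z)=\frac{1}{|\Lambda|}\sum_{\lambda^\circ\in\Lambda^\circ}\F_\sigma(\weyl_S)(\lambda^\circ)\,e^{2\pi i\sigma(\lambda^\circ,z)},
\end{equation*}
with the series converging absolutely in $A(\Rdd/\Lambda)$, hence in $M^\infty(\Rdd)$. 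Rewriting via the identity $\F_\sigma(\weyl_S)=\F_W(S)$ from \eqref{eq:fwsymp} replaces the Fourier coefficients by the samples $\F_W(S)(\lambda^\circ)$.

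Next I would apply the Weyl transform $L_{(\cdot)}$ to both sides. On the left, the translation covariance \eqref{eq:translateweyl} gives $L_{P_\Lambda \weyl_S}=\sum_{\lambda\in\Lambda}\alpha_\lambda(S)$, where the sum converges in the weak* topology of $\bo$ by Proposition \ref{prop:feichtingerwindow}; the identification with the Weyl transform of $P_\Lambda \weyl_S$ follows because the inclusion $\bo\hookrightarrow\beast$ is weak*-to-weak* continuous and the Weyl transform is a weak*-to-weak* isomorphism $\beast\to M^\infty(\Rdd)$ (Proposition \ref{prop:weak*}). On the right, Lemma \ref{lem:weylsymboltf} identifies $z'\mapsto e^{2\pi i\sigma(\lambda^\circ,z')}$ as the Weyl symbol of $e^{-\pi i\lambda^\circ_x\cdot \lambda^\circ_\omega}\pi(\lambda^\circ)$, so term-by-term application of the weak*-continuous Weyl transform produces exactly the desired Janssen-type expansion. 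The specialization to $\frameop_S$ is immediate by substituting $S^*S$ for $S$, noting $S^*S\in\beauty$ by Proposition \ref{prop:innerkernel}(c) and Corollary \ref{cor:composition}.

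For the weighted statement, I would use Proposition \ref{prop:fwbounded} to deduce that $\F_W(S)\in W(L^1_v)$ whenever $S\in\beauty_{v\otimes v}$, with $\|\F_W(S)\|_{W(L^1_v)}\lesssim \|S\|_{\beauty_{v\otimes v}}$, and then invoke Lemma \ref{lem:wienersampling} to conclude that the restriction $\{\F_W(S)(\lambda^\circ)\}_{\lambda^\circ\in\Lambda^\circ}$ belongs to $\ell^1_v(\Lambda^\circ)$. This sampling estimate also justifies a posteriori that the series on the right-hand side converges absolutely in $\bo$ (using $\|\pi(\lambda^\circ)\|_{\bo}=1$), tightening the weak* convergence coming from the Poisson formula.

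The main obstacle I anticipate is the convergence bookkeeping in the intermediate step: although the symbol series converges in $M^\infty(\Rdd)$ and the operator series converges weak* in $\bo$, one must show that applying the Weyl transform commutes with passing to the limit in the appropriate topology. This is handled cleanly by the weak*-to-weak* continuity of the Weyl transform on $\beast$ combined with the compatibility $\inner{T}{\phi\otimes\psi}_{\beast,\beauty}=\inner{T\psi}{\phi}_{M^\infty,M^1}$ from \eqref{eq:dualityaction3}, which reduces the identity, when tested against rank-one operators in $\beauty$, to an equality of absolutely convergent scalar series.
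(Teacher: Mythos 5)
Your proposal is correct and follows essentially the same route as the paper: pass to Weyl symbols, identify the symbol of the periodization as $P_\Lambda \weyl_S$ via the weak*-continuity of the Weyl correspondence (Propositions \ref{prop:weak*} and \ref{prop:feichtingerwindow}), apply the symplectic Poisson summation formula together with $\F_\sigma(\weyl_S)=\F_W(S)$, and translate back using Lemma \ref{lem:weylsymboltf}; the weighted claim is handled exactly as in the paper via Proposition \ref{prop:fwbounded} and Lemma \ref{lem:wienersampling}. Your added observations (absolute convergence of the operator series in $\bo$, and testing against rank-one operators via \eqref{eq:dualityaction3}) are correct refinements but not departures from the paper's argument.
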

\begin{proof}
	Recall that $\alpha_\lambda$ corresponds to a translation of the Weyl symbol by \eqref{eq:translateweyl}. Since the map sending operators in $\beast$ to their Weyl symbols in $M^\infty(\Rdd)$ is weak*-to-weak*-continuous by Proposition \ref{prop:weak*} and $\sum_{\lambda \in \Lambda} \alpha_\lambda(S)$ converges in the weak* topology of $\beast$ by Proposition \ref{prop:feichtingerwindow}, the Weyl symbol $f$ of $\sum_{\lambda \in \Lambda} \alpha_\lambda(S)$ is
	\begin{equation*}
  f =\sum_{\lambda \in \Lambda} T_\lambda (a_{S}) \in M^\infty(\Rdd),
\end{equation*}
where $\weyl_{S}$ is the Weyl symbol of $S$. Hence $f=P_\Lambda \weyl_{S}$. By the Poisson summation formula the symplectic Fourier series of $f$ is given by
\begin{align*}
 f(z)&=\frac{1}{|\Lambda|}\sum_{\lambda^\circ \in \Lambda^\circ} \F_\sigma(\weyl_S)(\lambda^\circ) e^{2\pi i \sigma(\lambda^\circ,z)} \\
  &= \frac{1}{|\Lambda|}\sum_{\lambda^\circ \in \Lambda^\circ} \F_W(S)(\lambda^\circ)  e^{2\pi i \sigma(\lambda^\circ,z)} \quad \text{ by } \eqref{eq:fwsymp}.
\end{align*}
By Proposition \ref{prop:weak*}, $\F_W(S)\in M^1(\Rdd)$, so $\{\F_W(S)(\lambda^\circ)\}_{\lambda^\circ \in \Lambda^\circ}\in \ell^1(\Lambda^\circ)$ by Lemma \ref{lem:wienersampling} -- hence the sum above converges absolutely in the norm of $M^\infty(\Rdd)$. 
Taking the Weyl transform of this using Lemma \ref{lem:weylsymboltf}, we see that 
\begin{equation*}
   \sum_{\lambda \in \Lambda} \alpha_\lambda(S) =\frac{1}{|\Lambda|}\sum_{\lambda^\circ\in \Lambda^\circ} \F_W(S)(\lambda^\circ) e^{-\pi i \lambda^\circ_x\cdot \lambda^\circ_\omega} \pi(\lambda^\circ).
\end{equation*}
For the last part, note that if $S\in \beauty_{v\otimes v}$, then $\F_W(S)\in W(L^1_v)$ by Proposition \ref{prop:fwbounded}, and the result follows from Lemma \ref{lem:wienersampling}. 
\end{proof}

\begin{example}[Multi-window Gabor frames] 
For $\{\phi_n\}_{n=1}^N\subset M^1(\Rd)$, we saw in Example \ref{example:multiwindow} that the frame operator of the multi-window Gabor system generated by $\{\phi_n\}_{n=1}^N$ equals $\frameop_S$ for $S=\sum_{n=1}^N \xi_n \otimes \phi_n,$ where $\{\xi_n\}_{n=1}^N$ is any orthonormal set in $L^2(\Rd)$. Then $$S^*S=\sum_{n=1}^N \phi_n \otimes \phi_n \in \beauty,$$ so by \eqref{eq:fwrankone}
$$\F_W(S^*S)(\lambda^\circ)=\sum_{n=1}^N \F_W(\phi_n\otimes \phi_n)(\lambda^\circ)=\sum_{n=1}^Ne^{\pi i \lambda^\circ_x\cdot \lambda^\circ_\omega} V_{\phi_n}\phi_n(\lambda^\circ).$$
Therefore Theorem \ref{theorem:janssen} gives that
\begin{equation*}
  \frameop_S=\frac{1}{|\Lambda|} \sum_{\lambda^\circ \in \Lambda^\circ}\sum_{n=1}^N V_{\phi_n}\phi_n(\lambda^\circ) \pi(\lambda^\circ),
\end{equation*}
which is the  Janssen representation for multi-window Gabor frames \cite{Dorfler:2011,Janssen:1995}.
\end{example}
We can also prove that any periodic operator in $\beast$ has a Fourier series expansion. By considering Weyl symbols, this is essentially the fact that any $\Lambda$-periodic distribution $f \in M^\infty(\Rdd)$ can be expanded in a symplectic Fourier series, which follows from the second part of Lemma \ref{lem:periodization}. The result is due to \cite{Feichtinger:1998}.

\begin{theorem} \label{theorem:fourierseriesdual}
	Let $S\in \beast$ be a $\Lambda$-periodic operator. Then there exists a unique sequence $\{c_{\lambda^\circ}\}_{\lambda^\circ \in \Lambda^\circ}\in \ell^\infty(\Lambda^\circ)$ such that 
	\begin{equation} \label{eq:fourierbeasts}
  S=\frac{1}{|\Lambda|}\sum_{\lambda^\circ \in \Lambda^\circ} c_{\lambda^\circ}e^{-\pi i \lambda_x^\circ\cdot \lambda_\omega^\circ} \pi(\lambda^\circ),
\end{equation}
with weak* convergence in $\beast$. Furthermore, the map $$\{c_{\lambda^\circ}\}_{\lambda^\circ \in \Lambda^\circ}\mapsto \sum_{\lambda^\circ \in \Lambda^\circ}c_{\lambda^\circ} e^{-\pi i \lambda^\circ_x \cdot \lambda^\circ_\omega} \pi(\lambda^\circ)$$ is weak*-to-weak*-continuous from $\ell^\infty(\Lambda^\circ)$ to $\beast$.
\end{theorem}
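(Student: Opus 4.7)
The plan is to reduce the statement to a Fourier series expansion of a $\Lambda$-periodic distribution in $M^\infty(\Rdd)$ via the Weyl calculus. By \eqref{eq:translateweyl} the assumption $\alpha_\lambda(S)=S$ for all $\lambda\in\Lambda$ is equivalent to $T_\lambda(\weyl_S)=\weyl_S$, i.e.\ $\weyl_S$ is a $\Lambda$-periodic element of $M^\infty(\Rdd)$. By Lemma \ref{lem:periodization}(b) we may write $\weyl_S=P_\Lambda^*\mu$ for a unique $\mu\in A'(\Rdd/\Lambda)$, and the definition of $A'(\Rdd/\Lambda)$ produces a uniquely determined sequence $\{c_{\lambda^\circ}\}_{\lambda^\circ\in\Lambda^\circ}\in\ell^\infty(\Lambda^\circ)$ of symplectic Fourier coefficients for $\mu$. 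This is the sequence we will show satisfies \eqref{eq:fourierbeasts}.

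Next I would verify that $\weyl_S=\tfrac{1}{|\Lambda|}\sum_{\lambda^\circ\in\Lambda^\circ} c_{\lambda^\circ}e^{2\pi i\sigma(\lambda^\circ,\cdot)}$ with weak* convergence in $M^\infty(\Rdd)$. For any test function $g\in M^1(\Rdd)$, duality gives
\begin{equation*}
\inner{\weyl_S}{g}_{M^\infty,M^1}=\inner{P_\Lambda^*\mu}{g}_{M^\infty,M^1}=\inner{\mu}{P_\Lambda g}_{A',A}.
\end{equation*}
Since $P_\Lambda g\in A(\Rdd/\Lambda)$ has symplectic Fourier coefficients $\tfrac{1}{|\Lambda|}\F_\sigma g(\lambda^\circ)$ by Poisson summation, and since the $A'$--$A$ pairing is evaluated by summing coefficient-by-coefficient, this equals $\tfrac{1}{|\Lambda|}\sum_{\lambda^\circ} c_{\lambda^\circ}\overline{\F_\sigma g(\lambda^\circ)}$. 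The sum converges absolutely because $\F_\sigma g\in M^1(\Rdd)$, so $\{\F_\sigma g(\lambda^\circ)\}\in\ell^1(\Lambda^\circ)$ by Lemma \ref{lem:wienersampling}. The partial sums of $\tfrac{1}{|\Lambda|}\sum c_{\lambda^\circ}e^{2\pi i\sigma(\lambda^\circ,\cdot)}$ pair against $g$ exactly to the corresponding partial sums of $\tfrac{1}{|\Lambda|}\sum c_{\lambda^\circ}\overline{\F_\sigma g(\lambda^\circ)}$, so they converge to $\inner{\weyl_S}{g}$ for every $g\in M^1(\Rdd)$, which is precisely weak* convergence. Applying the inverse Weyl transform, which is weak*-to-weak*-continuous by Proposition \ref{prop:weak*}, and identifying $L_{e^{2\pi i\sigma(\lambda^\circ,\cdot)}}=e^{-\pi i\lambda^\circ_x\cdot\lambda^\circ_\omega}\pi(\lambda^\circ)$ by Lemma \ref{lem:weylsymboltf}, yields \eqref{eq:fourierbeasts} with weak* convergence in $\beast$.

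Uniqueness of $\{c_{\lambda^\circ}\}$ is immediate from the uniqueness part of Lemma \ref{lem:periodization}(b) together with the uniqueness of symplectic Fourier coefficients in $A'(\Rdd/\Lambda)$: any two representations force equality of the corresponding $\mu\in A'(\Rdd/\Lambda)$ and hence of all coefficients. For weak*-to-weak* continuity of the coefficient-to-operator map, I would factor it as the composition
\begin{equation*}
\ell^\infty(\Lambda^\circ)\xleftrightarrow{\ \sim\ }A'(\Rdd/\Lambda)\xrightarrow{\ P_\Lambda^*\ }M^\infty(\Rdd)\xrightarrow{\ L_{(\cdot)}\ }\beast,
\end{equation*}
where the first identification is by construction weak*-to-weak*-continuous, the second is weak*-to-weak*-continuous as the Banach adjoint of the bounded operator $P_\Lambda:M^1(\Rdd)\to A(\Rdd/\Lambda)$ from Lemma \ref{lem:periodization}(a), and the third is weak*-to-weak*-continuous by Proposition \ref{prop:weak*}.

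The main obstacle is not analytic but bookkeeping: one has to pin down the precise identification between $A'(\Rdd/\Lambda)$ and $\ell^\infty(\Lambda^\circ)$ via symplectic Fourier coefficients, and confirm that the $A'$--$A$ pairing collapses to $\sum c_{\lambda^\circ}\overline{d_{\lambda^\circ}}$, so that the Poisson summation formula can be plugged into the duality computation above. Once this is in place, everything else is a transparent weak*-to-weak* transport of the classical Fourier expansion of a pseudomeasure on the compact group $\Rdd/\Lambda$.
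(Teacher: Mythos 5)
Your proposal is correct and follows the same overall strategy as the paper: pass to the Weyl symbol, use Lemma \ref{lem:periodization} to write $\weyl_S=P_\Lambda^*\mu$, expand via the Poisson summation formula, and transport back with Lemma \ref{lem:weylsymboltf}. Two points of divergence are worth recording. First, you run the convergence and continuity arguments at the symbol level in $M^\infty(\Rdd)$ and then push forward through the weak*-continuous Weyl transform, whereas the paper works directly in $\beast$ by pairing the partial sums against $T\in\beauty$ and using $\F_W(T)$; by \eqref{eq:gelftripoperators} and $\F_W(T)=\F_\sigma(\weyl_T)$ these are the same computation, and your factorization $\ell^\infty(\Lambda^\circ)\to A'(\Rdd/\Lambda)\to M^\infty(\Rdd)\to\beast$ is an arguably cleaner way to package the weak*-to-weak* continuity than the paper's identification of the map as the adjoint of $T\mapsto\{\tfrac{1}{|\Lambda|}\F_W(T)(\lambda^\circ)\}$. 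Second, and more substantively, your uniqueness argument rests on the injectivity of $P_\Lambda^*$, while the paper's rests on the surjectivity of the sampling map $\beauty\to\ell^1(\Lambda^\circ)$, $T\mapsto\{\F_W(T)(\lambda^\circ)\}$ (so that its adjoint is injective). Your route works, but be aware that Lemma \ref{lem:periodization}(b) as stated only describes the \emph{range} of $P_\Lambda^*$ and has no ``uniqueness part''; the injectivity you need is the one-line consequence of the surjectivity of $P_\Lambda$ in part (a) (if $P_\Lambda^*\mu=0$ then $\inner{\mu}{P_\Lambda g}=0$ for all $g\in M^1(\Rdd)$, hence $\mu=0$), and you should also state explicitly that your step-2 computation applies to an \emph{arbitrary} $\mu'\in A'(\Rdd/\Lambda)$, so that any competing coefficient sequence $\{c'_{\lambda^\circ}\}$ produces the operator $L_{P_\Lambda^*\mu'}$ and the Hausdorffness of the weak* topology forces $P_\Lambda^*\mu'=P_\Lambda^*\mu$. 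Both uniqueness mechanisms ultimately trace back to the same periodization/restriction theorem for the Feichtinger algebra, so nothing new is being assumed.
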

\begin{proof}
We first show that series of the form $\frac{1}{|\Lambda|}\sum_{\lambda^\circ \in \Lambda^\circ} c_{\lambda^\circ}e^{-\pi i \lambda_x^\circ \cdot  \lambda_\omega^\circ} \pi(\lambda^\circ)$ converge in the weak* topology of $\beast$ when $\{c_{\lambda^\circ}\}\in \ell^\infty(\Lambda^\circ)$. For $\{c_{\lambda^\circ}\}\in \ell^\infty(\Lambda^\circ)$, we define an antilinear functional on $\beauty$ by 
\begin{equation*}
 \inner{ \frac{1}{|\Lambda|}\sum_{\lambda^\circ \in \Lambda^\circ} c_{\lambda^\circ}e^{-\pi i \lambda_x^\circ \cdot  \lambda_\omega^\circ} \pi(\lambda^\circ)}{T}_{\beast, \beauty}:= \frac{1}{|\Lambda|}\sum_{\lambda^\circ \in \Lambda^\circ}c_{\lambda^\circ} \inner{e^{-\pi i \lambda_x^\circ \cdot  \lambda_\omega^\circ}\pi(\lambda^\circ)}{T}_{\beast, \beauty}.
\end{equation*}
To see that this is a \textit{bounded} functional, let $a_T$ be the Weyl symbol of $T$. Then 
\begin{align*}
 \left| \frac{1}{|\Lambda|}\sum_{\lambda^\circ \in \Lambda^\circ}c_{\lambda^\circ}e^{-\pi i \lambda_x^\circ \cdot  \lambda_\omega^\circ} \inner{\pi(\lambda^\circ)}{T}_{\beast, \beauty}\right|&\lesssim \sum_{\lambda^\circ \in \Lambda^\circ}|c_{\lambda^\circ}| \left|\inner{e^{-\pi i \lambda_x^\circ \cdot  \lambda_\omega^\circ} \pi(\lambda^\circ)}{T}_{\beast, \beauty}\right| \\
 &= \sum_{\lambda^\circ \in \Lambda^\circ}|c_{\lambda^\circ}| \left|\tr(e^{-\pi i \lambda_x^\circ \cdot  \lambda_\omega^\circ} \pi(\lambda^\circ) T^*)\right| 
\end{align*}
where the last step uses \eqref{eq:traceclassduality}.
By the definition of $\F_W$ and equation \eqref{eq:fwofadjoint}, $$tr(e^{-\pi i \lambda_x^\circ \cdot  \lambda_\omega^\circ} \pi(\lambda^\circ) T^*)=\F_W(T^*)(-\lambda^\circ)=\overline{“\mathcal{F}_W(T)(\lambda^\circ)”}.$$ We may therefore continue our estimate by 
\begin{align*}
  \sum_{\lambda^\circ \in \Lambda^\circ}|c_{\lambda^\circ}| \left|\tr(e^{-\pi i \lambda_x^\circ \cdot  \lambda_\omega^\circ} \pi(\lambda^\circ) T^*)\right|&= \sum_{\lambda^\circ \in \Lambda^\circ}|c_{\lambda^\circ}| |\F_W(T)(\lambda^\circ)| \\
 &\lesssim \|\{c_{\lambda^\circ}\}\|_{\ell^\infty(\Lambda^\circ)} \|\F_W(T)\|_{M^1} \quad \text{ by Lem. \ref{lem:wienersampling}}\\
 &\lesssim  \|\{c_{\lambda^\circ}\}\|_{\ell^\infty(\Lambda^\circ)} \|T\|_{\beauty} \quad \text{ by Prop. \ref{prop:weak*}}.
\end{align*}
Hence $\frac{1}{|\Lambda|}\sum_{\lambda^\circ \in \Lambda^\circ} c_{\lambda^\circ}e^{-\pi i \lambda_x^\circ \cdot  \lambda_\omega^\circ} \pi(\lambda^\circ) \in \beast$. The same calculation without absolute values shows that 
\begin{equation} \label{eq:fourierseriesproof}
  \inner{\frac{1}{|\Lambda|}\sum_{\lambda^\circ \in \Lambda^\circ}c_{\lambda^\circ}e^{-\pi i \lambda_x^\circ \cdot  \lambda_\omega^\circ} \pi(\lambda^\circ)}{T}_{\beast, \beauty}=\frac{1}{|\Lambda|}\sum_{\lambda^\circ \in \Lambda^\circ} c_{\lambda^\circ}\overline{\F_W(T)(\lambda^\circ)},
\end{equation}
which implies that the map sending $\{c_{\lambda^\circ}\}$ to this functional is in fact the Banach space adjoint of the map $\beauty \to \ell^1(\Lambda^\circ)$ given by $T\mapsto \{\frac{1}{|\Lambda|} \F_W(T)(\lambda^\circ)\}.$ In particular, the weak*-to-weak* continuity of the map $$\{c_{\lambda^\circ}\}\mapsto \sum_{\lambda^\circ \in \Lambda^\circ}c_{\lambda^\circ} e^{-\pi i \lambda^\circ_x \cdot \lambda^\circ_\omega} \pi(\lambda^\circ)$$  follows, as does the weak* convergence of the sum. 

The uniqueness also follows: the map $\beauty \to \ell^1(\Lambda^\circ)$ defined by $T\mapsto \{\frac{1}{|\Lambda|} \F_W(T)(\lambda^\circ)\}$ is surjective by \cite[Thm. 7 C)]{Feichtinger:1981}
 hence its Banach space adjoint is injective. 
We then turn to finding $\{c_{\lambda^\circ}\}_{\lambda^\circ \in \Lambda^\circ}$ such that \eqref{eq:fourierbeasts} holds. Since $S$ is a $\Lambda$-periodic operator in $\beast$, its Weyl symbol $\weyl_S$ is a $\Lambda$-periodic distribution in $M^\infty(\Rdd)$. By Lemma \ref{lem:periodization} there exists $\tilde{f}\in A'(\Rdd/\Lambda)$ such that $P_\Lambda^* \tilde{f}=\weyl_S$, and we pick $\{c_{\lambda^\circ}\}_{\lambda^\circ \in \Lambda^\circ}$ to be the symplectic Fourier coefficients of $\tilde{f}$. For any $T \in \beauty$ we have from \eqref{eq:gelftripoperators} that
	\begin{align*}
  \inner{S}{T}_{\beast,\beauty}&=\inner{\weyl_S}{\weyl_T}_{M^\infty,M^1} \\
  &= \inner{P_\Lambda^*\tilde{f}}{\weyl_T}_{M^\infty,M^1} \\
  &= \inner{\tilde{f}}{P_\Lambda \weyl_T}_{A'(\Rdd/\Lambda),A(\Rdd/\Lambda)} \\
  &= \frac{1}{|\Lambda|}  \inner{\{c_\lambda^\circ\}}{\{\F_\sigma(\weyl_T)(\lambda^\circ)\}}_{\ell^\infty(\Lambda^\circ),\ell^1(\Lambda^\circ)}.
\end{align*}
In the last equality we have used the Poisson summation formula to get that $\{\frac{1}{|\Lambda|}\F_\sigma(\weyl_T)(\lambda^\circ)\}_{\lambda^\circ \in \Lambda^\circ}$ are the symplectic Fourier coefficients of $P_\Lambda \weyl_T$. By comparing this to \eqref{eq:fourierseriesproof} and using $\F_W(T)=\F_\sigma(a_T)$ by \eqref{eq:fwsymp}, we have proved \eqref{eq:fourierbeasts}. 
\end{proof}
\begin{remark}
\begin{enumerate}[(a)]
	\item The uniqueness part of the previous theorem amounts to a well-known fact: if $\sum_{\lambda^\circ \in \Lambda^\circ}c_{\lambda^\circ} \pi(\lambda^\circ)=0$ for $c=\{c_{\lambda^\circ}\}_{\lambda^\circ\in \Lambda^\circ}\in \ell^\infty(\Lambda^\circ)$, then $c=0.$ Earlier proofs of this fact range from the rather complicated \cite{Rieffel:1988} to the pleasantly elementary \cite{Grochenig:2007}. Our proof is similar to that in \cite{Grochenig:2007}, and comes with a simple interpretation: the Fourier coefficients of periodic operators are unique. 
	\item If $S\in \beast$ is $\Lambda$-periodic and its Weyl symbol $\weyl_S$ belongs to the space $A(\Rdd/\Lambda)$ (i.e. its symplectic Fourier coefficients are absolutely summable), then there exists some $P_S\in \beauty$ such that $S=\sum_{\lambda \in \Lambda} \alpha_\lambda(P_S)$. This is \cite[Thm.. 7.7.6]{Feichtinger:1998}, and follows from applying the surjectivity in part (a) of Lemma \ref{lem:periodization} to $\weyl_S$.
\end{enumerate}
	\end{remark}
\subsection{Poisson summation formula for trace class operators}
When $S\in \tco$ the periodization $\sum_{\lambda \in \Lambda}\alpha_\lambda(S)$ converges in $\beast$ by Proposition \ref{prop:feichtingerwindow}, and by Theorem \ref{theorem:fourierseriesdual} there exists $\{c_{\lambda^\circ}\}\in \ell^\infty(\Lambda^\circ)$ such that $$\sum_{\lambda \in \Lambda}\alpha_\lambda(S)=\sum_{\lambda^\circ \in \Lambda^\circ} c_{\lambda^\circ}e^{-\pi i \lambda_x^\circ\cdot  \lambda_\omega^\circ} \pi(\lambda^\circ).$$ If $S\in \beauty$, we know from Theorem \ref{theorem:janssen} that $c_{\lambda^\circ}$ is given by the samples of $\F_W(S)$. However, even if $S\in \tco\setminus \beauty$, we know from the Riemann-Lebesgue lemma \eqref{eq:riemannlebesgue} that $\F_W(S)\in C_0(\Rdd)$. Hence the samples of $\F_W(S)$ are still well-defined, and we will use a continuity argument to show that $c_{\lambda^\circ}=\F_W(S)(\lambda^\circ)$ also when $S\in \tco \setminus \beauty$. 

\begin{theorem}[Poisson summation formula for trace class operators] \label{prop:traceclassjanssen}
	Let $S\in \tco$. Then 
	\begin{equation*}
  \sum_{\lambda \in \Lambda}\alpha_\lambda(S)=\frac{1}{|\Lambda|}\sum_{\lambda \in \Lambda} \F_W(S)(\lambda^\circ)e^{-\pi i \lambda^\circ_x \cdot  \lambda^\circ_\omega} \pi(\lambda^\circ),
\end{equation*}
with weak* convergence of both sums in $\beast.$
\end{theorem}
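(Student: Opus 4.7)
The strategy is to reduce the result for trace class operators to the known case $S \in \beauty$ from Theorem \ref{theorem:janssen} via a density and continuity argument. By Lemma \ref{lem:denseintraceclass}, choose a sequence $\{S_n\} \subset \beauty$ with $S_n \to S$ in the trace norm. For each $n$, Theorem \ref{theorem:janssen} gives
\begin{equation*}
\sum_{\lambda \in \Lambda} \alpha_\lambda(S_n) = \frac{1}{|\Lambda|}\sum_{\lambda^\circ \in \Lambda^\circ} \F_W(S_n)(\lambda^\circ) e^{-\pi i \lambda_x^\circ \cdot \lambda_\omega^\circ} \pi(\lambda^\circ)
\end{equation*}
as an equality in $\beast$. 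The plan is to pass to the limit $n \to \infty$ on both sides and obtain the claimed identity in the weak* topology of $\beast$, appealing to uniqueness of weak*-limits at the end.

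For the left-hand side, Proposition \ref{prop:feichtingerwindow} shows that the periodization map $S \mapsto \sum_\lambda \alpha_\lambda(S)$ is bounded from $\tco$ into $\beast$, so $\sum_\lambda \alpha_\lambda(S_n) \to \sum_\lambda \alpha_\lambda(S)$ in the norm of $\beast$, and a fortiori in weak*. For the right-hand side, the sequence $\{\F_W(S)(\lambda^\circ)\}_{\lambda^\circ \in \Lambda^\circ}$ lies in $\ell^\infty(\Lambda^\circ)$ by the Riemann-Lebesgue estimate \eqref{eq:riemannlebesgue}, so the right-hand side is well-defined as an element of $\beast$ by Theorem \ref{theorem:fourierseriesdual}. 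The same estimate gives
\begin{equation*}
\|\F_W(S_n) - \F_W(S)\|_{L^\infty} \leq \|S_n - S\|_{\tco} \to 0,
\end{equation*}
so $\{\F_W(S_n)(\lambda^\circ)\} \to \{\F_W(S)(\lambda^\circ)\}$ in the norm of $\ell^\infty(\Lambda^\circ)$, hence in the weak* topology of $\ell^\infty(\Lambda^\circ) = (\ell^1(\Lambda^\circ))^*$. The weak*-to-weak* continuity of the synthesis map $\{c_{\lambda^\circ}\} \mapsto \frac{1}{|\Lambda|}\sum_{\lambda^\circ} c_{\lambda^\circ} e^{-\pi i \lambda_x^\circ \cdot \lambda_\omega^\circ} \pi(\lambda^\circ)$ from Theorem \ref{theorem:fourierseriesdual} then guarantees that the right-hand side for $S_n$ converges in the weak* topology of $\beast$ to the right-hand side for $S$. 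Since the two sides agree for each $n$, the equality persists in the limit.

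The technically subtlest point is the convergence of the right-hand side: one must carefully combine the Riemann-Lebesgue estimate, which yields uniform control of the Fourier-Wigner samples, with the fact that the series $\sum_{\lambda^\circ} c_{\lambda^\circ} e^{-\pi i \lambda_x^\circ \cdot \lambda_\omega^\circ} \pi(\lambda^\circ)$ for a general $c \in \ell^\infty(\Lambda^\circ)$ converges only in the weak* sense and not in operator norm. This is precisely why $\beast$ with its weak* topology is the natural setting for the Poisson summation formula at the trace class level — the weak* machinery developed in Theorem \ref{theorem:fourierseriesdual} is exactly what is needed to make the density argument above go through without requiring the stronger structure of $\beauty$.
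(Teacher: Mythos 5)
Your proposal is correct and follows essentially the same route as the paper's proof: approximate $S$ in trace norm by operators in $\beauty$ via Lemma \ref{lem:denseintraceclass}, apply Theorem \ref{theorem:janssen} to each approximant, pass to the limit on the left using the boundedness of the periodization map $\tco\to\beast$ from Proposition \ref{prop:feichtingerwindow}, and on the right using the Riemann--Lebesgue estimate \eqref{eq:riemannlebesgue} combined with the weak*-to-weak* continuity statement of Theorem \ref{theorem:fourierseriesdual}. No gaps.
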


\begin{proof}
	 Let $\{S_n\}_{n\in \N}\subset \beauty$ be a sequence converging to $S$ in the norm of $\tco$ using Lemma \ref{lem:denseintraceclass}. By Theorem \ref{theorem:janssen}, we have for each $n\in \N$ that
	\begin{equation} \label{eq:approximatefourierseries}
  \sum_{\lambda \in \Lambda} \alpha_\lambda(S_n)=\frac{1}{|\Lambda|} \sum_{\lambda^\circ \in \Lambda^\circ} \F_W(S_n)(\lambda^\circ) e^{-\pi i \lambda_x^\circ \cdot \lambda_\omega^\circ} \pi(\lambda^\circ).
\end{equation}
By Proposition \ref{prop:feichtingerwindow}, the left hand side of \eqref{eq:approximatefourierseries} converges to $\sum_{\lambda \in \Lambda} \alpha_\lambda(S)$ in $\beast$ as $n\to \infty$. Then note that $$\|\F_W(S)\vert_{\Lambda^\circ}-\F_W(S_n)\vert_{\Lambda^\circ} \|_{\ell^\infty(\Lambda^\circ)}=\|\F_W(S-S_n)\vert_{\Lambda^\circ}\|_{\ell^\infty(\Lambda^\circ)} \leq \|S-S_n\|_{\tco}$$ by \eqref{eq:riemannlebesgue}, hence the samples $\F_W(S_n)\vert_{\Lambda^\circ}$ converge to $\F_W(S)\vert_{\Lambda^\circ}$ in $\ell^\infty(\Lambda^\circ)$ as $n\to \infty$. Combining this with the continuity statement in Theorem \ref{theorem:fourierseriesdual}, we see that the right hand side of \eqref{eq:approximatefourierseries} converges in the weak* topology of $\beast$ to  $\frac{1}{|\Lambda|} \sum_{\lambda^\circ \in \Lambda^\circ} \F_W(S)(\lambda^\circ) e^{-\pi i \lambda_x^\circ \cdot\lambda_\omega^\circ} \pi(\lambda^\circ)$ as $n\to \infty.$ As the limits of both sides of \eqref{eq:approximatefourierseries} must be equal, the result follows.
\end{proof}

\subsection{The twisted Wiener's lemma}
The results in the previous section supplement the theory of the Fourier transform of operators, as introduced by Werner in \cite{Werner:1984}, by showing that periodic operators have a Fourier series expansion. A classic result for Fourier series of functions is Wiener's lemma: if a periodic function is invertible and has an absolutely convergent Fourier series, then its inverse has an absolutely convergent Fourier series. The same holds for operators, by a result due to Gr\"ochenig and Leinert \cite{Grochenig:2004}. Recall that $v$ is a submultiplicative, symmetric GRS-weight -- the GRS condition is crucial for this result.

\begin{theorem} \label{theorem:leinert}
	Assume that $S=\sum_{\lambda^\circ \in \Lambda^\circ} c_{\lambda^\circ} \pi(\lambda^\circ)$ for some sequence $\{c_{\lambda^\circ}\}_{\lambda^\circ \in \Lambda^\circ}\in \ell^1_{v}(\Lambda^\circ)$ and that $S$ is invertible on $L^2(\Rd)$. Then $$S^{-1}=\sum_{\lambda^\circ} a_{\lambda^\circ} \pi(\lambda^\circ)$$ for some sequence $\{a_{\lambda^\circ}\}_{\lambda^\circ \in \Lambda^\circ}\in \ell^1_{v}(\Lambda^\circ)$.
\end{theorem}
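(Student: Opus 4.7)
The plan is to recognize the series representation as giving an element of a twisted group Banach $*$-algebra over $\Lambda^\circ$, and then to quote the inverse-closedness result of Gr\"ochenig and Leinert \cite{Grochenig:2004} for this algebra. All the substantive work goes into making the $*$-algebra identification explicit; the spectral invariance is then a black-box application.

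First I would introduce the Banach space
\[
  \mathcal{A}_v := \Bigl\{ S \in \bo : S = \sum_{\lambda^\circ \in \Lambda^\circ} c_{\lambda^\circ}\pi(\lambda^\circ) \text{ for some } \{c_{\lambda^\circ}\} \in \ell^1_v(\Lambda^\circ) \Bigr\},
\]
equipped with the norm $\|S\|_{\mathcal{A}_v} := \|\{c_{\lambda^\circ}\}\|_{\ell^1_v(\Lambda^\circ)}$. Well-definedness of this norm requires that the map $\{c_{\lambda^\circ}\} \mapsto \sum c_{\lambda^\circ}\pi(\lambda^\circ)$ is injective from $\ell^1_v(\Lambda^\circ)$ into $\bo$; this follows by regarding $\ell^1_v(\Lambda^\circ) \subset \ell^\infty(\Lambda^\circ)$ and invoking the uniqueness statement from Theorem \ref{theorem:fourierseriesdual} (combined with the weak$^*$-continuous embedding $\bo \hookrightarrow \beast$). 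Since $v \geq 1$ and each $\pi(\lambda^\circ)$ is unitary, $\mathcal{A}_v \hookrightarrow \bo$ with $\|S\|_{\bo} \leq \|S\|_{\mathcal{A}_v}$.

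Next I would verify that $\mathcal{A}_v$ is a Banach $*$-algebra under operator composition and operator adjoint. The composition law
\[
  \pi(\mu)\pi(\nu) = e^{-2\pi i \mu_\omega \cdot \nu_x}\,\pi(\mu+\nu), \qquad \mu,\nu \in \Rdd,
\]
shows that if $S = \sum c_{\mu}\pi(\mu)$ and $T = \sum d_{\nu}\pi(\nu)$ lie in $\mathcal{A}_v$, then
\[
  ST = \sum_{\eta \in \Lambda^\circ} \Bigl(\sum_{\mu \in \Lambda^\circ} c_\mu\, d_{\eta-\mu}\, e^{-2\pi i \mu_\omega \cdot (\eta_x-\mu_x)} \Bigr) \pi(\eta),
\]
so the coefficients of $ST$ are the twisted convolution of those of $S$ and $T$ on $\Lambda^\circ$ with the Heisenberg cocycle. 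Submultiplicativity of $v$ yields $\|ST\|_{\mathcal{A}_v} \leq \|S\|_{\mathcal{A}_v}\|T\|_{\mathcal{A}_v}$, and $\pi(\lambda^\circ)^* = e^{-2\pi i \lambda^\circ_x \cdot \lambda^\circ_\omega}\pi(-\lambda^\circ)$ together with the symmetry of $v$ shows that the involution is an isometry on $\mathcal{A}_v$. Thus $\mathcal{A}_v$ is a unital Banach $*$-subalgebra of $\bo$.

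The main obstacle, and the essence of the theorem, is to show that $\mathcal{A}_v$ is inverse-closed in $\bo$: if $S \in \mathcal{A}_v$ is invertible as an operator on $L^2(\Rd)$, then $S^{-1} \in \mathcal{A}_v$. This is precisely the content of the Gr\"ochenig-Leinert theorem \cite{Grochenig:2004}, whose proof uses the GRS condition on $v$ in an essential way: by Hulanicki's lemma it suffices to show that $\mathcal{A}_v$ is a symmetric Banach $*$-algebra, and symmetry is established via a spectral radius estimate for $S^*S$ that reduces, through the GRS hypothesis on $v$, to the untwisted symmetric case. Taking this result for granted, setting $\{a_{\lambda^\circ}\}_{\lambda^\circ \in \Lambda^\circ}$ to be the $\ell^1_v$-sequence of coefficients of $S^{-1} \in \mathcal{A}_v$ completes the proof.
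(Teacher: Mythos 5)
Your proposal is correct and takes essentially the same route as the paper, which offers no proof of its own and simply attributes the statement to Gr\"ochenig and Leinert \cite{Grochenig:2004} -- exactly the black box you invoke after setting up the twisted $\ell^1_v(\Lambda^\circ)$ algebra. (One harmless slip: with the paper's convention $\pi(z)=M_\omega T_x$ the composition law reads $\pi(\mu)\pi(\nu)=e^{-2\pi i \mu_x\cdot\nu_\omega}\pi(\mu+\nu)$, not $e^{-2\pi i \mu_\omega\cdot\nu_x}\pi(\mu+\nu)$; since the cocycle is unimodular this does not affect any of your estimates.)
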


This has consequences for Gabor g-frames generated by an operator $S\in \beauty_{v\otimes v}$.

\begin{corollary} \label{cor:dualgenerators0}
Assume that $S\in \beauty_{v\otimes v}$ generates a Gabor g-frame over a lattice $\Lambda$. Then
$\frameop^{-1}_S=\sum_{\lambda^\circ\in \Lambda^\circ} a_{\lambda^\circ} \pi(\lambda^\circ)$ for a sequence $\{a_{\lambda^\circ}\}_{\lambda^\circ \in \Lambda^\circ}\in \ell^1_{v}(\Lambda^\circ).$
\end{corollary}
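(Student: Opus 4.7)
The plan is to string together three facts already established in the excerpt: the closedness of $\beauty_{v\otimes v}$ under the operations needed to form $S^*S$; the Janssen representation of $\frameop_S$ with $\ell^1_v$ coefficients; and the twisted Wiener lemma.

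First, I would verify that $S^*S \in \beauty_{v\otimes v}$. By Proposition \ref{prop:innerkernel}(c), $S^* \in \beauty_{v\otimes v}$ whenever $S \in \beauty_{v\otimes v}$, and by Corollary \ref{cor:composition}, $\beauty_{v\otimes v}$ is closed under composition. Hence $S^*S \in \beauty_{v\otimes v}$.

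Next, I would apply Theorem \ref{theorem:janssen} to the g-frame operator. Since $S^*S \in \beauty_{v\otimes v}$, the theorem gives
\begin{equation*}
  \frameop_S = \frac{1}{|\Lambda|} \sum_{\lambda^\circ \in \Lambda^\circ} c_{\lambda^\circ}\, \pi(\lambda^\circ), \qquad c_{\lambda^\circ} := \F_W(S^*S)(\lambda^\circ)\, e^{-\pi i \lambda^\circ_x \cdot \lambda^\circ_\omega},
\end{equation*}
with $\{\F_W(S^*S)(\lambda^\circ)\}_{\lambda^\circ \in \Lambda^\circ} \in \ell^1_v(\Lambda^\circ)$ by the last statement of Theorem \ref{theorem:janssen}. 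Since the factor $e^{-\pi i \lambda^\circ_x \cdot \lambda^\circ_\omega}$ is unimodular, $\{c_{\lambda^\circ}\}_{\lambda^\circ \in \Lambda^\circ} \in \ell^1_v(\Lambda^\circ)$ as well.

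Finally, since $S$ generates a Gabor g-frame, the g-frame operator $\frameop_S$ is bounded and invertible on $L^2(\Rd)$. The representation above matches the hypothesis of Theorem \ref{theorem:leinert}, so the twisted Wiener lemma yields $\frameop_S^{-1} = \sum_{\lambda^\circ \in \Lambda^\circ} a_{\lambda^\circ} \pi(\lambda^\circ)$ for some $\{a_{\lambda^\circ}\} \in \ell^1_v(\Lambda^\circ)$, which is the claim. There is no real obstacle here: the proof is a straightforward chain of invocations, and the only mildly non-trivial point to verify carefully is that $S \in \beauty_{v\otimes v}$ implies $S^*S \in \beauty_{v\otimes v}$, which the preceding section has already taken care of.
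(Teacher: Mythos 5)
Your proposal is correct and follows essentially the same route as the paper: invertibility of $\frameop_S$ from the g-frame property, the Janssen representation with $\ell^1_v$-coefficients (which implicitly requires $S^*S\in\beauty_{v\otimes v}$, secured by Proposition \ref{prop:innerkernel}(c) and Corollary \ref{cor:composition} exactly as you note), and then Theorem \ref{theorem:leinert}. The paper's own proof is just a terser version of this same chain of invocations.
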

\begin{proof}
$\frameop_S$ is invertible on $L^2(\Rd)$ as $S$ generates a Gabor g-frame. By the Janssen representation in Theorem \ref{theorem:janssen} we can apply Theorem \ref{theorem:leinert} to $\frameop_S$. 
 \end{proof}

\subsection{Wexler-Raz and some conditions for Gabor g-frames} 
Recall that two operators $S,T\in \HS$ generate \textit{dual Gabor g-frames} if $S$ and $T$ generate Gabor g-frames and
 \begin{equation*}
  \sum_{\lambda \in \Lambda} \alpha_\lambda (S^*T)\psi=\psi \quad \text{ for any } \psi \in L^2(\Rd).
\end{equation*}
 A characterization of dual Gabor g-frames is given by a version of the Wexler-Raz biorthogonality conditions from \cite{Feichtinger:1998}. We extend the result in \cite{Feichtinger:1998} to Hilbert Schmidt operators.
\begin{theorem}[Wexler-Raz biorthogonality]  \label{theorem:wexlerraz}
	Let $S,T\in \HS$ such that $S$ and $T$ satisfy the upper g-frame bound in \eqref{eq:gaborgframes}. Then
	\begin{equation} \label{eq:firstwexler}
  \sum_{\lambda \in \Lambda} \alpha_{\lambda} (S^*T) \psi = \psi \quad \text{ for any } \psi \in L^2(\Rd)
\end{equation}
	  if and only if 
	  \begin{equation} \label{eq:secondwexler}
  \F_W(S^*T)(\lambda^\circ)=|\Lambda| \delta_{\lambda^\circ,0} \quad \text{ for } \lambda^\circ \in \Lambda^\circ.
\end{equation}
\end{theorem}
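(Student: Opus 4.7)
The plan is to combine the Poisson summation formula for trace class operators (Theorem~\ref{prop:traceclassjanssen}) with the uniqueness of Fourier coefficients of $\Lambda$-periodic elements of $\beast$ (Theorem~\ref{theorem:fourierseriesdual}). Since $S, T \in \HS$, the product $S^*T$ is trace class, so Theorem~\ref{prop:traceclassjanssen} gives
\begin{equation*}
\sum_{\lambda \in \Lambda} \alpha_\lambda(S^*T) \;=\; \frac{1}{|\Lambda|}\sum_{\lambda^\circ \in \Lambda^\circ} \F_W(S^*T)(\lambda^\circ)\, e^{-\pi i \lambda^\circ_x\cdot \lambda^\circ_\omega}\,\pi(\lambda^\circ),
\end{equation*}
with weak* convergence in $\beast$. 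This identity is the common engine for both directions.

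Next I would observe that the upper g-frame bound makes the analysis operator $C_T$ and the synthesis operator $D_S$ bounded, so the same formal sum $\sum_{\lambda}\alpha_\lambda(S^*T)\psi = D_S C_T \psi$ converges in $L^2(\Rd)$ for every $\psi$. Hence condition \eqref{eq:firstwexler} is equivalent to the operator identity $D_SC_T = I$ in $\bo$. The bridge between the two topologies is that strong operator convergence in $\bo$ implies weak convergence in $\bo$, which is the same as weak*-convergence of $\bo$ as the dual of $\tco$, and this in turn gives weak*-convergence in $\beast$ via the weak*-to-weak*-continuous inclusion $\bo \hookrightarrow \beast$ established in Section~\ref{sec:operators}. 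Thus both interpretations of the sum $\sum_\lambda \alpha_\lambda(S^*T)$ agree as one and the same element of $\beast$.

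For the implication \eqref{eq:secondwexler}$\Rightarrow$\eqref{eq:firstwexler}, substituting $\F_W(S^*T)(\lambda^\circ) = |\Lambda|\,\delta_{\lambda^\circ,0}$ collapses the displayed right-hand side to $\tfrac{1}{|\Lambda|}\cdot|\Lambda|\cdot\pi(0) = I$ in $\beast$, so $D_SC_T = I$ on $L^2(\Rd)$, yielding \eqref{eq:firstwexler}. For the converse, assuming $D_SC_T = I$ means that the left-hand side of the displayed identity equals $I$ as an element of $\beast$. The operator $I=\pi(0)$ is itself $\Lambda$-periodic and admits the trivial Fourier series corresponding to coefficients $c_{\lambda^\circ} = |\Lambda|\,\delta_{\lambda^\circ,0}$; by the uniqueness statement in Theorem~\ref{theorem:fourierseriesdual} the Fourier coefficients of $I$ are uniquely determined, which forces $\F_W(S^*T)(\lambda^\circ) = |\Lambda|\,\delta_{\lambda^\circ,0}$.

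The only real subtlety I expect is the topological matching described in the second paragraph: making sure that the strongly converging operator series on $L^2(\Rd)$ and the weak*-converging series in $\beast$ supplied by the trace class Poisson formula identify the \emph{same} element of $\beast$, so that the uniqueness part of Theorem~\ref{theorem:fourierseriesdual} can legitimately be invoked. Once that identification is in place, the rest is a direct reading of the Janssen-type expansion.
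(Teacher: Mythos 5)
Your proposal is correct and follows essentially the same route as the paper: apply the Poisson summation formula for trace class operators (Theorem \ref{prop:traceclassjanssen}) to $S^*T\in\tco$, identify \eqref{eq:firstwexler} with $D_SC_T=I$, and invoke the uniqueness of Fourier coefficients from Theorem \ref{theorem:fourierseriesdual}. The paper's proof is in fact terser than yours — it leaves implicit the topological matching between the $L^2$-convergent operator sum and the weak*-convergent series in $\beast$, which you spell out (correctly, since the partial sums are uniformly bounded by $\|D_S\|\,\|C_T\|$).
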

\begin{proof}
	Our assumption on $S$ and $T$ ensures that $D_SC_T\psi = \sum_{\lambda} \alpha_\lambda (S^*T)\psi$ defines a bounded operator on $L^2(\Rd)$. Since $S,T\in \HS$, we have $S^*T\in \tco$ and by Proposition \ref{prop:traceclassjanssen}
	\begin{equation*}
    \sum_{\lambda\in \Lambda} \alpha_\lambda(S^*T) = \frac{1}{|\Lambda|}  \sum_{\lambda^\circ \in \Lambda^\circ} \F_W(S^*T)(\lambda^\circ)e^{-i\pi \lambda^\circ_x \cdot \lambda^\circ_\omega} \pi ( \lambda^\circ).
	\end{equation*}
	Equation \eqref{eq:firstwexler} states that the left hand side is the identity operator $\pi(0)$, and the uniqueness part of Theorem \ref{theorem:fourierseriesdual} implies that this is true if and only if \eqref{eq:secondwexler} holds. \\
\end{proof}

Note that under the assumptions of Theorem \ref{theorem:wexlerraz}, both $S^*$ and $T$ generate Gabor g-frames by Proposition \ref{prop:dualframes}. As first noted in \cite{Feichtinger:1998}, the theorem reproduces the familiar Wexler-Raz biorthogonality conditions for Gabor frames. 

\begin{example}  
Consider two sets of $N$ functions $\{\phi_n\}_{n=1}^N, \{\psi_n\}_{n=1}^N \subset L^2(\Rd)$. As in Example \ref{example:multiwindow}, we associate an operator to each of these systems:
\begin{align*}
  &S=\sum_{n=1}^N \xi_n \otimes \phi_n, &&T=\sum_{n=1}^N \xi_n \otimes \psi_n,
\end{align*}
where $\{\xi_n\}_{n=1}^N$ is an orthonormal system in $L^2(\Rd)$. Assume that the multi-window Gabor systems generated by $\{\phi_n\}_{n=1}^N$ and $\{\psi_n\}_{n=1}^N$ are Bessel systems, i.e.  $$\sum_{n=1}^N \sum_{\lambda\in \Lambda} |V_{\phi_n}\psi(\lambda)|^2\lesssim \|\psi\|_{L^2}^2 \quad \text{ for any } \psi \in L^2(\Rd),$$ and the same inequality for $\psi_n$. It is a simple exercise to show that this condition implies that $S$ and $T$ satisfy the upper g-frame bound, so Theorem \ref{theorem:wexlerraz} applies. 

Note that $S^*T=\sum_{n=1}^N \phi_n\otimes \psi_n$, and $\F_W(S^*T)(z)=e^{\pi i x \cdot \omega}\sum_{n=1}^N  V_{\psi_n}\phi_n(z)$ by \eqref{eq:fwrankone}. We also find using \eqref{eq:shiftrankone} that
\begin{equation*}
  \sum_{\lambda \in \Lambda} \alpha_\lambda (S^*T) \eta= \sum_{\lambda \in \Lambda}V_{\psi_n}\eta(\lambda) \pi(\lambda)\phi_n \quad \text{ for } \eta \in L^2(\Rd). 
\end{equation*}
Hence Theorem \ref{theorem:wexlerraz} says that 
\begin{equation*}
  \eta= \sum_{\lambda \in \Lambda}V_{\psi_n}\eta(\lambda) \pi(\lambda)\phi_n \quad \text{ for } \eta \in L^2(\Rd)
\end{equation*}
if and only if  $$\sum_{n=1}^N  V_{\psi_n}\phi_n(\lambda^\circ)=|\Lambda| \delta_{\lambda^\circ,0} \quad \text{ for } \lambda^\circ \in \Lambda^\circ.$$ This is the usual version of the Wexler-Raz biorthogonality conditions for multi-window Gabor frames.
\end{example}
We note some simple consequences of Theorem \ref{theorem:wexlerraz}.
\begin{corollary}
	\begin{enumerate}[(a)]
		\item Let $S\in \beauty$. If there exists some $T\in \beauty$ such that $\F_W(S^*T)(0)\neq 0$ and $\F_W(S^*T)(\lambda^\circ)=0$ for $\lambda^\circ\neq 0$, then $S$ generates a Gabor g-frame.
		\item Let $S\in \beauty$. If there exist $\phi,\psi \in M^1(\Rd)$ such that $$V_{\phi} (S^*\psi)(\lambda^\circ)=|\Lambda| \delta_{\lambda^\circ,0}\quad \text{ for } \lambda^\circ \in \Lambda^\circ,$$ then $S$ generates a Gabor g-frame.
		\item If $S\in \beauty$ satisfies $\Lambda^\circ\cap\{z'-z'':z',z''\in \text{supp}(\F_W(S))\} =\{0\}$, then $S$ generates a tight Gabor g-frame.    
	\end{enumerate}
\end{corollary}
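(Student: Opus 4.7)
The plan for (a) is to reduce to the Wexler--Raz biorthogonality conditions of Theorem \ref{theorem:wexlerraz}. Setting $c := \F_W(S^*T)(0)\neq 0$ and replacing $T$ by the scaled operator $T' = (|\Lambda|/c)\,T\in \beauty$, linearity of $\F_W$ in the operator argument gives $\F_W(S^*T')(\lambda^\circ)=|\Lambda|\,\delta_{\lambda^\circ,0}$ on $\Lambda^\circ$. Since $S,T'\in \beauty\subset \HS$, Corollary \ref{cor:beautyupperbound} provides the upper g-frame bound for both, so Theorem \ref{theorem:wexlerraz} yields $\sum_{\lambda\in\Lambda}\alpha_\lambda(S^*T')\psi=\psi$ for every $\psi\in L^2(\Rd)$, and Proposition \ref{prop:dualframes} concludes that $S$ generates a Gabor g-frame. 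For (b), I would apply (a) with the rank-one operator $T:=\psi\otimes\phi\in \beauty$. A direct computation gives $S^*T=(S^*\psi)\otimes\phi$; applying \eqref{eq:fwrankone} and observing that the resulting phase equals $1$ at $z=0$, the hypothesis on $V_\phi(S^*\psi)$ translates to $\F_W(S^*T)(\lambda^\circ)=|\Lambda|\,\delta_{\lambda^\circ,0}$, so (a) applies.

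\textbf{Part (c).} I would combine the Janssen representation of $\frameop_S$ from Theorem \ref{theorem:janssen} with a support argument for $\F_W(S^*S)$. The Fourier--Wigner transform intertwines operator composition with a twisted convolution on $\Rdd$, as one sees by passing to Weyl symbols via \eqref{eq:fwsymp} and recalling that the Weyl symbol of a product is a Moyal star product, which $\F_\sigma$ turns into a twisted convolution. Because twisted convolution is ordinary convolution multiplied by a unimodular phase, $\text{supp}(\F_W(S^*S))\subseteq \text{supp}(\F_W(S^*))+\text{supp}(\F_W(S))$, and by \eqref{eq:fwofadjoint} this equals $\{z'-z'':z',z''\in\text{supp}(\F_W(S))\}$. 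The hypothesis then forces $\F_W(S^*S)(\lambda^\circ)=0$ for every nonzero $\lambda^\circ\in\Lambda^\circ$, so the Janssen expansion collapses to $\frameop_S = \frac{1}{|\Lambda|}\F_W(S^*S)(0)\,I = \frac{\|S\|_{\HS}^2}{|\Lambda|}\,I$, using $\F_W(S^*S)(0)=\tr(S^*S)=\|S\|_{\HS}^2$. The hypothesis also rules out $S=0$ (otherwise $\text{supp}(\F_W(S))=\emptyset$ and the stated intersection would be empty, not $\{0\}$), so the frame bound $\|S\|_{\HS}^2/|\Lambda|$ is strictly positive and the g-frame is tight.

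\textbf{Main obstacle.} The only step that is not a routine consequence of results already in the paper is the support-under-twisted-convolution property invoked in (c); it is standard in quantum harmonic analysis but not isolated in the excerpt, so I would state and verify it as a brief lemma using \eqref{eq:fwsymp} and the symbol calculus. Alternatively, one can bypass twisted convolution entirely by decomposing $S=\sum_n \phi_n^{(1)}\otimes\phi_n^{(2)}$ via Proposition \ref{prop:innerkernel}, expanding $\F_W(S^*S)$ by \eqref{eq:fwrankone} as an absolutely convergent double series of short-time Fourier transforms whose individual supports are all contained in $\text{supp}(\F_W(S))-\text{supp}(\F_W(S))$, and concluding from that expansion.
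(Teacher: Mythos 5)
Your proof follows the paper's argument essentially verbatim: in (a) and (b) you rescale $T$ (resp.\ take $T=\psi\otimes\phi$) and invoke Theorem \ref{theorem:wexlerraz} together with Corollary \ref{cor:beautyupperbound} and Proposition \ref{prop:dualframes}, and in (c) you use the same twisted-convolution support argument plus the Janssen representation that the paper uses (the paper likewise quotes the formula $\F_W(S^*S)(z)=\int_{\Rdd}\F_W(S^*)(z-z')\F_W(S)(z')e^{\pi i\sigma(z,z')}\,dz'$ as well known rather than proving it, so stating it as a brief lemma is consistent with the source). One caveat on your proposed "alternative" bypass at the end: expanding $\F_W(S^*S)$ through a rank-one decomposition of $S$ and claiming each cross term $V_{\phi_n^{(2)}}\phi_m^{(2)}$ is supported in $\mathrm{supp}(\F_W(S))-\mathrm{supp}(\F_W(S))$ does not work, because cancellation in the sum $\F_W(S)=\sum_n e^{\pi i x\cdot\omega}V_{\phi_n^{(2)}}\phi_n^{(1)}$ means the supports of the individual terms are not controlled by $\mathrm{supp}(\F_W(S))$; keep the twisted-convolution route.
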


\begin{proof}
\begin{enumerate}[(a)]
	\item Define $\tilde{T}=\frac{|\Lambda|}{\F_W(S^*T)(0)}T$. Then $S,\tilde{T}\in \beauty$, so $S,\tilde{T}$ satisfy the upper g-frame bound in \eqref{eq:gaborgframes} by Corollary \ref{cor:beautyupperbound}. Hence Theorem \ref{theorem:wexlerraz} applies to give that $S,\tilde{T}$ generate dual Gabor g-frames, and the result follows from Proposition \ref{prop:dualframes}. 
	\item Let $T=\psi \otimes \phi.$ Then  $S^*T=(S^*\psi)\otimes \phi$. Since $\F_W( (S^*\psi)\otimes \phi)(x,\omega)=e^{\pi i x\cdot \omega}V_\phi(S^*\psi)(x,\omega)$ by \eqref{eq:fwrankone}, the result follows from part (a).
	\item It is well-known (see \cite{Luef:2017,Feichtinger:1998}) that $$\F_W(S^*S)(z)= \int_{\Rdd} \F_W(S^*)(z-z')\F_W(S)(z') e^{\pi i \sigma(z,z')} \ dz',$$ where the right hand side is the so-called twisted convolution of $\F_W(S^*)$ with $\F_W(S)$. \eqref{eq:fwofadjoint} we get $$\F_W(S^*S)(z)= \int_{\Rdd} \overline{\F_W(S)(z'-z)}\F_W(S)(z') e^{\pi i \sigma(z,z')} \ dz'.$$ One easily deduces that a necessary condition for $\F_W(S^*S)(z)$ to be non-zero is that $z=z'-z''$, where both $z',z''\in \text{supp}(\F_W(S))$, hence the condition in the statement ensures that $\F_W(S^*S)(\lambda^\circ)=0$ for $\lambda^\circ \neq 0$. In addition, $\F_W(S^*S)(0)=\tr(S^*S)=\|S\|_{\HS}^2>0$. Therefore $\tilde{S}=\frac{\sqrt{|\Lambda|}}{\|S\|_\HS}S$ satisfies $$\sum_{\lambda \in \Lambda} \alpha_{\lambda}(\tilde{S}^*\tilde{S})\psi=\psi$$ for any $\psi \in L^2(\Rd)$ by Theorem \ref{theorem:janssen}, which implies that $$\sum_{\lambda \in \Lambda} \alpha_{\lambda}(S^*S)\psi=\frac{\|S\|_{\HS}^2}{|\Lambda|}\psi.$$ 
\end{enumerate}	
\end{proof}
\begin{remark}
\begin{enumerate}[(a)]
	\item The condition in part (c) above can be satisfied if $S$ is an \textit{underspread} operator (as defined by Kozek \cite{Kozek:1997thesis,Kozek:1997,Kozek:1998}), with $\text{supp}(\F_W(S))\subset B_R(0)$ for some small $R>0$, where $B_R(0)\subset \Rdd$ is the ball of radius $R$ centered at $0$. In this case $\{z'-z'':z',z''\in \text{supp}(\F_W(S))\}\subset B_{2R}(0)$, so by picking sufficiently small $R$ the condition in the corollary can be satisfied. Such $S$ may easily be constructed, for instance by picking a smooth bump function $f\in M^1(\Rdd)$ supported in $B_R(0)$ -- since $\F_W$ is bijective from $\beauty$ to $M^1(\Rdd)$, there exists some $S\in \beauty$ with $\F_W(S)=f.$ By a result of Janssen \cite{Janssen:1998a} this simple construction will never work for Gabor frames: there is no rank-one operator $S=\psi \otimes \phi$ such that $\F_W(S)(x,\omega)=e^{\pi i x \cdot \omega}V_\phi \psi(x,\omega)$ has compact support.	
	\item If $\Lambda$ is a separable lattice $\Lambda=\alpha \Z^d \times \beta \Z^d$ for $\alpha,\beta \in \R$, then $\Lambda^\circ=\frac{1}{\beta} \Z^d \times \frac{1}{\alpha} \Z^d$. It follows from the Janssen representation that if $\F_W(S^*S)(\frac{m}{\beta},\frac{n}{\alpha})=0$ whenever $0\neq m\in \Z^d$, then the g-frame operator is simply the multiplication operator $$\psi(t) \mapsto \left(\frac{1}{\alpha \beta} \sum_{n\in \Z^d} \F_W(S^*S)\left(0,\frac{n}{\alpha}\right) e^{2\pi i n\cdot t/\alpha }\right) \psi(t).$$ If $S$ is a rank-one operator $\phi \otimes \phi$, this can be achieved by picking compactly supported $\phi$ -- this leads to the \textit{painless nonorthogonal expansions} of \cite{Daubechies:1986}.
\end{enumerate}

\end{remark}

The Wexler-Raz conditions sometimes allow us to deduce that $S$ and $T$ generate dual Gabor g-frames, or, when $S=T$, that $S$ generates a tight Gabor g-frame. The Janssen representation also implies the following test for deciding when $S\in \beauty$ generates a (not necessarily tight) Gabor g-frame. 

\begin{proposition} \label{prop:janssentest}
	Let $S\in \beauty$, and assume that $\sum_{0\neq \lambda^\circ \in \Lambda^\circ} |\F_W(S^*S)(\lambda^\circ)|<\|S\|_{\HS}^2$. Then $S$ generates a Gabor g-frame. 
\end{proposition}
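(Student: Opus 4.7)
The plan is to apply the Janssen representation from Theorem \ref{theorem:janssen} to $\frameop_S$ and split off the $\lambda^\circ = 0$ term, which is a positive multiple of the identity. The hypothesis then says that the remaining terms form a bounded operator with operator norm strictly smaller than this multiple, so the frame operator is a strictly positive invertible operator on $L^2(\Rd)$.

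First I would verify that $S^*S \in \beauty$ so Theorem \ref{theorem:janssen} applies: this follows from Proposition \ref{prop:innerkernel}(c) (giving $S^*\in \beauty$) and Corollary \ref{cor:composition} ($\beauty$ closed under composition). The upper g-frame bound then comes from Corollary \ref{cor:beautyupperbound}. By Theorem \ref{theorem:janssen},
\begin{equation*}
\frameop_S = \frac{\F_W(S^*S)(0)}{|\Lambda|} I + R, \qquad R := \frac{1}{|\Lambda|} \sum_{0\neq \lambda^\circ \in \Lambda^\circ} \F_W(S^*S)(\lambda^\circ) e^{-\pi i \lambda^\circ_x \cdot \lambda^\circ_\omega} \pi(\lambda^\circ),
\end{equation*}
noting that $\pi(0) = I$ and $\F_W(S^*S)(0) = \tr(S^*S) = \|S\|_{\HS}^2$ directly from the definition of $\F_W$.

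Next I would estimate $\|R\|_{\bo}$. Since each $\pi(\lambda^\circ)$ is unitary and the sum in the Janssen representation converges absolutely by the summability of $\F_W(S^*S)|_{\Lambda^\circ}$ established in Theorem \ref{theorem:janssen}, the triangle inequality gives
\begin{equation*}
\|R\|_{\bo} \leq \frac{1}{|\Lambda|} \sum_{0\neq \lambda^\circ \in \Lambda^\circ} |\F_W(S^*S)(\lambda^\circ)| < \frac{\|S\|_{\HS}^2}{|\Lambda|}
\end{equation*}
by hypothesis. Consequently, for every $\psi \in L^2(\Rd)$,
\begin{equation*}
\inner{\frameop_S \psi}{\psi}_{L^2} \geq \frac{\|S\|_{\HS}^2}{|\Lambda|}\|\psi\|_{L^2}^2 - \|R\|_{\bo}\|\psi\|_{L^2}^2 = A\|\psi\|_{L^2}^2,
\end{equation*}
with $A := \frac{\|S\|_{\HS}^2}{|\Lambda|} - \|R\|_{\bo} > 0$. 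Since $\inner{\frameop_S \psi}{\psi}_{L^2} = \sum_{\lambda\in \Lambda} \|\alpha_\lambda(S)\psi\|_{L^2}^2$, this is exactly the lower g-frame bound, completing the proof together with the upper bound from Corollary \ref{cor:beautyupperbound}.

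There is no real obstacle here: the only minor point to be careful about is that the Janssen series for $\frameop_S$ involves $S^*S$ (not $S$), and that the constant term $\F_W(S^*S)(0)$ correctly equals $\|S\|_{\HS}^2$; both are immediate from the definitions. The proof is essentially an operator-theoretic Neumann-series / diagonal-dominance argument applied to the Fourier side of the frame operator.
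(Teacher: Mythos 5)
Your proposal is correct and takes essentially the same route as the paper: apply the Janssen representation to $\frameop_S$, identify the $\lambda^\circ=0$ term as $\frac{\|S\|_{\HS}^2}{|\Lambda|}I$ via $\F_W(S^*S)(0)=\tr(S^*S)$, and bound the remainder in operator norm by the off-zero $\ell^1$-sum. The only cosmetic difference is at the final step, where you estimate the quadratic form $\inner{\frameop_S\psi}{\psi}_{L^2}$ directly to produce an explicit lower frame bound, while the paper invokes the Neumann series theorem to conclude that $\frameop_S$ is invertible — both conclusions rest on the identical operator-norm estimate.
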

\begin{proof}
	By the Janssen representation and the fact that $\F_W(S^*S)(0)=\tr(S^*S)=\|S\|^2_{\HS}>0$,
	\begin{align*}
  \frameop_S&=\frac{1}{|\Lambda|} \sum_{\lambda^\circ \in \Lambda^\circ} \F_W(S^*S)(\lambda^\circ)e^{-\pi i \lambda^\circ_x\cdot \lambda^\circ_\omega} \pi(\lambda^\circ) \\
  &= \frac{\|S\|^2_{\HS}}{|\Lambda|}\underbrace{\left(I + \sum_{0\neq \lambda^\circ \in \Lambda^\circ} \frac{\F_W(S^*S)(\lambda^\circ)}{\|S\|^2_{\HS}}e^{-\pi i \lambda^\circ_x \cdot \lambda^\circ_\omega} \pi(\lambda^\circ) \right)}_{:=A},
\end{align*}
so $\frameop_S$ has a bounded inverse on $L^2(\Rd)$ if and only if $A$ has a bounded inverse. As
\begin{equation*}
  \|A-I\|_{\bo}\leq \sum_{0\neq \lambda^\circ \in \Lambda^\circ} \frac{|\F_W(S^*S)(\lambda^\circ)|}{\|S\|_{\HS}^2}<1,
\end{equation*}
by assumption, the Neumann theorem \cite[Thm. A.3]{Grochenig:2001} implies that $A$ has a bounded inverse on $L^2(\Rd)$.
\end{proof}

\begin{remark}
  When $S=\phi\otimes \phi$ for some $\phi \in M^1(\Rd)$, the proposition above becomes a well-known result for Gabor frames. To our knowledge the first appearance of this special case in the literature is \cite[Thm. 4.1.1]{Tschurtschenthaler:2000}.
\end{remark}

\begin{corollary}
	Let $0\neq S\in \beauty$ and $\Lambda$ a lattice. There exists $N\in \N$ such that $S$ generates a Gabor g-frame over the lattice $\frac{1}{N}\Lambda$.
\end{corollary}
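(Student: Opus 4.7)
The plan is to apply Proposition \ref{prop:janssentest} to $S$ viewed as a generator over the lattice $\frac{1}{N}\Lambda$ for some large integer $N$. First I will check the easy identity $\bigl(\tfrac{1}{N}\Lambda\bigr)^\circ = N\Lambda^\circ$: indeed, $\mu\in\bigl(\tfrac{1}{N}\Lambda\bigr)^\circ$ iff $e^{2\pi i\sigma(\mu,\lambda)/N}=1$ for every $\lambda\in\Lambda$, which is equivalent to $\mu/N\in\Lambda^\circ$. With this identification, the hypothesis of Proposition \ref{prop:janssentest} for the lattice $\frac{1}{N}\Lambda$ becomes
\begin{equation*}
\sum_{0\neq \mu^\circ\in N\Lambda^\circ}|\F_W(S^*S)(\mu^\circ)|<\|S\|_{\HS}^2,
\end{equation*}
and the task is to produce some $N\in\N$ for which this strict inequality holds.

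The two inputs needed are now straightforward. Since $S\in\beauty$, Proposition \ref{prop:innerkernel}(c) gives $S^*\in\beauty$ and Corollary \ref{cor:composition} then yields $S^*S\in\beauty$. Consequently the last part of Theorem \ref{theorem:janssen} (taking $v\equiv 1$) provides the summability $\{\F_W(S^*S)(\lambda^\circ)\}_{\lambda^\circ\in\Lambda^\circ}\in\ell^1(\Lambda^\circ)$. Moreover, the right-hand side $\|S\|_{\HS}^2$ is strictly positive because $S\neq 0$.

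The main step is then a dominated-convergence argument along the decreasing family of lattices $\{N\Lambda^\circ\}_{N\in\N}$, all contained in $\Lambda^\circ$. For any fixed $0\neq\mu\in\Lambda^\circ$, membership $\mu\in N\Lambda^\circ$ forces $\mu/N\in\Lambda^\circ$; but $\mu/N\to 0$, while $\Lambda^\circ\setminus\{0\}$ is bounded away from $0$ by discreteness, so $\mu\notin N\Lambda^\circ$ for all sufficiently large $N$. Hence $\chi_{N\Lambda^\circ\setminus\{0\}}(\mu)\to 0$ pointwise on $\Lambda^\circ$ as $N\to\infty$, and dominated convergence with the $\ell^1(\Lambda^\circ)$-dominating function $|\F_W(S^*S)|$ gives
\begin{equation*}
\sum_{0\neq\mu^\circ\in N\Lambda^\circ}|\F_W(S^*S)(\mu^\circ)|\xrightarrow{N\to\infty}0.
\end{equation*}
Picking $N$ large enough that this sum is smaller than $\|S\|_{\HS}^2$ and invoking Proposition \ref{prop:janssentest} finishes the proof.

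I do not expect a serious obstacle; the only delicate points are the dilation rule $\bigl(\tfrac{1}{N}\Lambda\bigr)^\circ=N\Lambda^\circ$ and recognizing that the $\ell^1$ summability supplied by the Janssen representation is exactly the domination needed. Everything else is bookkeeping.
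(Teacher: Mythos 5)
Your proposal is correct and follows essentially the same route as the paper: both reduce the claim to Proposition \ref{prop:janssentest} over $\tfrac{1}{N}\Lambda$, use the dilation rule $\bigl(\tfrac{1}{N}\Lambda\bigr)^\circ=N\Lambda^\circ\subset\Lambda^\circ$, and exploit the $\ell^1(\Lambda^\circ)$ summability of $\{\F_W(S^*S)(\lambda^\circ)\}$ supplied by Theorem \ref{theorem:janssen}. The only (immaterial) difference is that the paper makes the tail estimate explicit by choosing a radius $K$ with $\sum_{|\lambda^\circ|>K}|\F_W(S^*S)(\lambda^\circ)|<\|S\|_{\HS}^2$ and then taking $N$ so that all nonzero points of $N\Lambda^\circ$ lie outside that ball, whereas you phrase the same fact as a dominated-convergence limit.
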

\begin{proof}
	Since $\sum_{\lambda^\circ \in \Lambda} |\F_W(S^*S)(\lambda^\circ)|<\infty$ by Theorem \ref{theorem:janssen}, there exists $K\in \N$ with 
	\begin{equation*}
  \sum_{|\lambda^\circ|>K}  |\F_W(S^*S)(\lambda^\circ)|<\|S\|_{\HS}^2.
\end{equation*}
Let $N\in \N$ be the smallest integer such that $|\lambda^\circ|> K/N$ for any $0\neq \lambda^\circ \in \Lambda^\circ$, and consider the lattice $\Gamma=\frac{1}{N}\Lambda$. Then $\Gamma^\circ=N \Lambda^\circ\subset \Lambda^\circ$. 
By definition, the non-zero elements $\gamma^\circ \in \Gamma^\circ$ are all of the form $\gamma^\circ = N \lambda^\circ$. In particular, they satisfy $|\gamma^\circ|>K$ and $\gamma^\circ \in \Lambda^\circ$. Therefore 
  \begin{equation*}
  \sum_{0\neq \gamma^\circ \in \Gamma^\circ} |\F_W(S^*S)(\gamma^\circ)|\leq
  \sum_{|\lambda^\circ|>K}  |\F_W(S^*S)(\lambda^\circ)|<\|S\|_{\HS}^2,
\end{equation*}
hence $S$ generates a Gabor g-frame with respect to $\Gamma=\frac{1}{N}\Lambda$ by Proposition \ref{prop:janssentest}. 
\end{proof}

\section{Gabor g-frames and modulation spaces} \label{sec:eqnorms}
It is a well-known fact that if a function $\phi\in M^1_v(\Rd)$ generates a Gabor frame, then the $\ell^p_m(\Lambda)$-norm of the coefficients $\{V_{\phi}\psi(\lambda)\}_{\lambda \in \Lambda}$ is an equivalent norm to $\|\psi\|_{M^p_m}.$ To extend this result to Gabor g-frames, we will need to introduce some appropriate Banach spaces. Once this is done, our proofs will mainly proceed by reducing the statement for Gabor g-frames to the statement for Gabor frames, which may be found in the standard reference \cite{Grochenig:2001}.

 For $p\in [1,\infty]$ and a $v$-moderate weight $m$ we define the space $\ell^p_m(\Lambda;L^2)$ to be the Banach space of sequences $\{\psi_\lambda\}_{\lambda \in \Lambda}\subset L^2(\Rd)$ such that
\begin{equation*}
  \|\{\psi_\lambda\}\|_{\ell^p_m(\Lambda;L^2)}:=\left(\sum_{\lambda \in \Lambda} \|\psi_\lambda\|^p_{L^2}m(\lambda)^p \right)^{1/p}<\infty.
\end{equation*}
For $p=\infty$ the sum is replaced by a supremum in the usual way. For $m\equiv 1$ we write $\ell^p_m(\Lambda;L^2)=\ell^p(\Lambda;L^2).$ The dual space of $\ell^p_m(\Lambda;L^2)$ for $p<\infty$ is $\ell^{p'}_{1/m}(\Lambda;L^2)$ with
\begin{equation} \label{eq:sequenceduality}
  \inner{\{\phi_\lambda\}}{\{\psi_\lambda\}}_{\ell^{p'}_{1/m}(\Lambda;L^2),\ell^p_m(\Lambda;L^2)}=\sum_{\lambda \in \Lambda} \inner{\phi_\lambda}{\psi_\lambda}_{L^2}
\end{equation}
for $\{\phi_\lambda\}_{\lambda \in \Lambda}\in \ell^{p'}_{1/m}(\Lambda;L^2),\{\psi_\lambda\}_{\lambda \in \Lambda}\in \ell^p_m(\Lambda;L^2).$ It is clear from the definitions that finite sequences $\{\psi_\lambda\}_{\lambda\in \Lambda}$ (meaning that $\psi_\lambda\neq 0$ for finitely many $\lambda$) are dense in $\ell^p_m(\Lambda;L^2)$ for $p<\infty$ and weak*-dense in $\ell^\infty_{m}(\Lambda;L^2)$.
\begin{remark}
	The norm $\|\{\psi_\lambda\}\|_{\ell^p_m(\Lambda;L^2)}$ equals $\|\{m(\lambda)\cdot\psi_\lambda\}\|_{L^p(\Lambda,L^2)}$, where $L^p(\Lambda,L^2)$ is a vector-valued $L^p$-space with $\Lambda$ equipped with counting measure. Since $m(\lambda)>0$ for any $\lambda \in \Lambda$, we may immediately translate results from the theory of vector-valued $L^p$-spaces, see Chapter 1 of \cite{Hytonen:2016}, into statements about $\ell^p_m(\Lambda;L^2)$. In particular, they are Banach spaces and the duality \eqref{eq:sequenceduality} follows from \cite[Prop. 1.3.3]{Hytonen:2016}. 
\end{remark}

We have already met the space $\ell^2(\Lambda;L^2)$, and seen that $C_S$ is bounded from $L^2(\Rd)$ into $\ell^2(\Lambda;L^2)$ when $S$ generates a Gabor g-frame. The next result shows that this result can be generalized to other $p$ and $m$ when $S\in \beauty_{v\otimes v}$.

\begin{theorem} \label{theorem:upperbound}
	If $S\in \beauty_{v\otimes v}$ and $p\in [1,\infty]$, then the analysis operator $C_S$ is bounded from $M^p_m(\Rd)$ to $\ell^p_m(\Lambda;L^2)$ with operator norm $\|C_S\|_{M^p_m\to \ell^p_m(\Lambda;L^2)}\lesssim \|S\|_{\beauty_{v\otimes v}}$
	where the implicit constant is independent of $p$ and $m$. 
\end{theorem}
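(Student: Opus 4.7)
The plan is to decompose $S$ into rank-one operators via Proposition \ref{prop:innerkernel} and thereby reduce the estimate to the sampling bound for the short-time Fourier transform in Lemma \ref{lem:lpstft}. As a preliminary observation, since $m$ is $v$-moderate and $v$ is symmetric one has $1/v \lesssim m$ (choose $z_1 = 0$, $z_2 = z$ in \eqref{eq:vmoderate}), so Proposition \ref{prop:modulationspaces}(c) gives $M^p_m(\Rd) \hookrightarrow M^\infty_{1/v}(\Rd)$. Parts (c)--(d) of Proposition \ref{prop:innerkernel} then ensure that $\alpha_\lambda(S)\psi = \pi(\lambda)S\pi(\lambda)^*\psi$ is a well-defined element of $M^1_v(\Rd) \hookrightarrow L^2(\Rd)$ for every $\psi \in M^p_m(\Rd)$, so that $\|\alpha_\lambda(S)\psi\|_{L^2}$ makes sense.

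Now fix an arbitrary rank-one decomposition $S = \sum_n \phi_n^{(1)} \otimes \phi_n^{(2)}$ provided by Proposition \ref{prop:innerkernel}(a). Applying the extension formula in part (c) to $\pi(\lambda)^*\psi \in M^\infty_{1/v}(\Rd)$ and then acting by $\pi(\lambda)$ gives the absolutely convergent expansion
\begin{equation*}
\alpha_\lambda(S)\psi = \sum_{n \in \N} V_{\phi_n^{(2)}}\psi(\lambda)\, \pi(\lambda)\phi_n^{(1)},
\end{equation*}
where one uses the adjoint identity $\inner{\pi(\lambda)^*\psi}{\phi}_{M^\infty_{1/v},M^1_v} = \inner{\psi}{\pi(\lambda)\phi}_{M^\infty_{1/v},M^1_v} = V_\phi\psi(\lambda)$ for $\phi \in M^1_v(\Rd)$. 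The triangle inequality in $L^2(\Rd)$ and unitarity of $\pi(\lambda)$ give $\|\alpha_\lambda(S)\psi\|_{L^2} \leq \sum_n |V_{\phi_n^{(2)}}\psi(\lambda)|\,\|\phi_n^{(1)}\|_{L^2}$, and taking the $\ell^p_m(\Lambda)$-norm in $\lambda$ followed by the triangle inequality in $\ell^p_m(\Lambda)$ to interchange it with the sum over $n$ yields
\begin{equation*}
\|C_S\psi\|_{\ell^p_m(\Lambda;L^2)} \leq \sum_{n \in \N} \|\phi_n^{(1)}\|_{L^2}\,\|V_{\phi_n^{(2)}}\psi|_\Lambda\|_{\ell^p_m(\Lambda)}.
\end{equation*}
Applying Lemma \ref{lem:lpstft} to each inner norm, together with the continuous inclusion $M^1_v(\Rd) \hookrightarrow L^2(\Rd)$ to estimate $\|\phi_n^{(1)}\|_{L^2} \lesssim \|\phi_n^{(1)}\|_{M^1_v}$, then passing to the infimum over all decompositions and invoking Proposition \ref{prop:innerkernel}(b), delivers the desired estimate $\|C_S\psi\|_{\ell^p_m(\Lambda;L^2)} \lesssim \|S\|_{\beauty_{v\otimes v}}\|\psi\|_{M^p_m}$.

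All implicit constants encountered (from $M^1_v \hookrightarrow L^2$, from Lemma \ref{lem:lpstft}, and from the norm equivalence in Proposition \ref{prop:innerkernel}(b)) are independent of $p$ and $m$, so the final constant is too; for $p = \infty$ sums are replaced by suprema in the obvious way. The only genuinely subtle step is justifying the rank-one expansion of $\alpha_\lambda(S)\psi$ when $\psi$ lies in $M^p_m(\Rd) \setminus L^2(\Rd)$, but this is essentially built into parts (c)--(d) of Proposition \ref{prop:innerkernel}; everything else is routine manipulation of norms and an interchange of summations.
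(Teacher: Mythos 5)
Your proof is correct and follows essentially the same route as the paper's: decompose $S$ into rank-one operators via Proposition \ref{prop:innerkernel}, bound $\|\alpha_\lambda(S)\psi\|_{L^2}$ by $\sum_n |V_{\phi_n^{(2)}}\psi(\lambda)|\,\|\phi_n^{(1)}\|_{M^1_v}$, interchange with the $\ell^p_m(\Lambda)$-norm by the triangle inequality, and apply Lemma \ref{lem:lpstft} together with Proposition \ref{prop:innerkernel}(b); your extra care about well-definedness for $\psi\in M^p_m(\Rd)\setminus L^2(\Rd)$ is a welcome refinement rather than a departure. One trivial slip: to get $1/v\lesssim m$ you should take $z_1=z$, $z_2=-z$ in \eqref{eq:vmoderate} (giving $m(0)\lesssim m(z)v(z)$ by symmetry of $v$), not $z_1=0$, $z_2=z$, which yields the opposite comparison $m\lesssim v$.
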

\begin{proof} 
Let 
\begin{equation*}
  S=\sum_{n\in \N} \phi_n^{(1)}\otimes \phi_n^{(2)},
\end{equation*} 
be a decomposition as in part (a) of Proposition \ref{prop:innerkernel}. Then 
\begin{align*}
  \|\alpha_\lambda (S) \psi\|_{L^2}&=\left\|\left(\sum_{n \in \mathbb{N}} \pi(\lambda) \phi_n^{(1)}\otimes \pi(\lambda)\phi_n^{(2)}\right) \psi  \right\|_{L^2}\\ 
  &\leq \sum_{n \in \mathbb{N}}   |V_{\phi_n^{(2)}}\psi (\lambda)|\cdot  \|\pi(\lambda)\phi_n^{(1)}\|_{L^2} \\
  &=\sum_{n \in \mathbb{N}}   |V_{\phi_n^{(2)}}\psi (\lambda)|\cdot  \|\phi_n^{(1)}\|_{L^2} \lesssim \sum_{n \in \mathbb{N}}  |V_{\phi_n^{(2)}}\psi (\lambda)|\cdot  \|\phi_n^{(1)}\|_{M^1_v},
\end{align*}
where the last inequality uses $M_{v}^1(\Rd)\hookrightarrow L^2(\Rd)$.
Then assume that $p<\infty$, and use the inequality above and the triangle inequality for $\ell^{p}_{m}(\Lambda)$ to get
\begin{multline*}
  \left( \sum_{\lambda \in \Lambda}\|\alpha_\lambda (S) \psi\|_{L^2}^p m(\lambda)^p\right)^{1/p}  \\
  \begin{aligned}
    &\lesssim  \left(\sum_{\lambda \in \Lambda} \left( \sum_{n \in \mathbb{N}}   |V_{\phi_n^{(2)}}\psi (\lambda)|\cdot  \|\phi_n^{(1)}\|_{M^1_v}\right)^{p}m(\lambda)^p\right)^{1/p}\\
    &=\left\|\sum_{n \in \mathbb{N}} \left\{  |V_{\phi_n^{(2)}}\psi (\lambda)|\cdot \|\phi_n^{(1)}\|_{M^1_v}\right\}_{\lambda \in \Lambda} \right\|_{\ell^p_{m}(\Lambda)} \\
    &\leq \sum_{n \in \mathbb{N}}  \left\| \left\{ |V_{\phi_n^{(2)}}\psi (\lambda)|\cdot \|\phi_n^{(1)}\|_{M^1_v}\right\}_{\lambda \in \Lambda}\right\|_{\ell^p_{m}(\Lambda)}  \\
    &= \sum_{n \in \mathbb{N}}  \|\phi_n^{(1)}\|_{M^1_v} \left\| \left\{ |V_{\phi_n^{(2)}}\psi (\lambda)|\right\}_{\lambda \in \Lambda}\right\|_{\ell^p_{m}(\Lambda)} 
    \\
    &\lesssim  \|\psi\|_{M_{m}^{p}} \sum_{n \in \mathbb{N}}  \|\phi_n^{(1)}\|_{M^1_v} \|\phi_n^{(2)}\|_{M_{v}^1} \quad \text{ by Lemma \ref{lem:lpstft}}.
  \end{aligned}
\end{multline*}
The norm inequality $\|C_S\|_{op}\lesssim \|S\|_{\beauty_{v\otimes v}}$ then follows from part (b) of Proposition \ref{prop:innerkernel}.
For $p=\infty$, we use Lemma \ref{lem:lpstft} to find that for any $\lambda\in \Lambda$
\begin{align*}
  \|\alpha_\lambda (S) \psi \|_{L^2}\cdot  m(\lambda)&\lesssim  \sum_{n \in \mathbb{N}} |V_{\phi_n^{(2)}}\psi(\lambda)| \cdot m(\lambda) \cdot \|\phi_n^{(1)}\|_{M^1_v} \\
  &\leq \|\psi\|_{M^{\infty}_{m}} \sum_{n \in \mathbb{N}}  \|\phi_n^{(2)}\|_{M_{v}^1}  \|\phi_n^{(1)}\|_{M^1_v}  .
\end{align*}
\end{proof}

\begin{theorem}  \label{theorem:synthesis}
If $S\in \beauty_{v\otimes v}$ and $p\in [1,\infty]$, then the synthesis operator $D_S$ is bounded from $\ell^p_m(\Lambda;L^2)$ to $M^p_m(\Rd)$,
	with operator norm $\|D_S\|_{\ell^p_m(\Lambda;L^2)\to M^p_m}\lesssim \|S\|_{\beauty_{v\otimes v}}$ independent of $p$ and $m$. For $\{\psi_\lambda\}_{\lambda \in \Lambda}\in \ell^p_m(\Lambda;L^2)$, the expansion
\begin{equation*}
  D_S(\{\psi_\lambda\})=\sum_{\lambda \in \Lambda} \alpha_{\lambda}(S^*) \psi_\lambda
\end{equation*}
converges unconditionally in $M^p_m(\Rd)$ for $p<\infty$ and in the weak* topology of $M^\infty_{1/v}(\Rd)$ for $p=\infty$.	
\end{theorem}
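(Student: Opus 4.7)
My plan is to reduce this to the known boundedness of the Gabor synthesis operator on modulation spaces, which can be found in \cite[Thm.~12.2.4]{Grochenig:2001}, by using the rank-one decomposition of $S$ from Proposition \ref{prop:innerkernel}. The estimates will closely parallel the structure of the proof of Theorem \ref{theorem:upperbound}, simply with the roles of analysis and synthesis reversed at the level of rank-one Gabor systems.

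Write $S = \sum_{n \in \N} \phi_n^{(1)} \otimes \phi_n^{(2)}$ with $\sum_n \|\phi_n^{(1)}\|_{M^1_v}\|\phi_n^{(2)}\|_{M^1_v} < \infty$ by Proposition \ref{prop:innerkernel}(a). Then $S^* = \sum_n \phi_n^{(2)} \otimes \phi_n^{(1)}$, and \eqref{eq:shiftrankone} together with the definition of the STFT gives
\begin{equation*}
  \alpha_\lambda(S^*)\psi_\lambda = \sum_{n \in \N} V_{\phi_n^{(1)}}\psi_\lambda(\lambda)\, \pi(\lambda)\phi_n^{(2)},
\end{equation*}
so formally $D_S(\{\psi_\lambda\}) = \sum_n \big( \sum_\lambda V_{\phi_n^{(1)}}\psi_\lambda(\lambda)\, \pi(\lambda)\phi_n^{(2)} \big)$. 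For each fixed $n$, the inner sum is exactly a Gabor synthesis with window $\phi_n^{(2)} \in M^1_v(\Rd)$ applied to the scalar sequence $c^{(n)} = \{V_{\phi_n^{(1)}}\psi_\lambda(\lambda)\}_{\lambda \in \Lambda}$. The Cauchy--Schwarz bound $|c^{(n)}_\lambda| \leq \|\phi_n^{(1)}\|_{L^2}\|\psi_\lambda\|_{L^2}$ yields $\|c^{(n)}\|_{\ell^p_m(\Lambda)} \leq \|\phi_n^{(1)}\|_{L^2}\|\{\psi_\lambda\}\|_{\ell^p_m(\Lambda;L^2)}$, and the Gabor synthesis theorem then gives
\begin{equation*}
  \bigg\| \sum_\lambda c^{(n)}_\lambda \pi(\lambda)\phi_n^{(2)} \bigg\|_{M^p_m} \lesssim \|\phi_n^{(2)}\|_{M^1_v} \|\phi_n^{(1)}\|_{L^2} \|\{\psi_\lambda\}\|_{\ell^p_m(\Lambda;L^2)},
\end{equation*}
with unconditional convergence in $M^p_m$ for $p < \infty$ and weak* convergence in $M^\infty_m$ for $p=\infty$. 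Summing over $n$ and using $\|\phi_n^{(1)}\|_{L^2} \lesssim \|\phi_n^{(1)}\|_{M^1_v}$ together with Proposition \ref{prop:innerkernel}(b) produces the desired bound $\|D_S\|_{\ell^p_m(\Lambda;L^2) \to M^p_m} \lesssim \|S\|_{\beauty_{v \otimes v}}$, with the implicit constant independent of $p$ and $m$.

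To make this rigorous, I would first establish the norm estimate for finite sequences $\{\psi_\lambda\}$, where all sums are finite and Fubini trivially permits the swap of $\sum_n$ and $\sum_\lambda$. Extension to arbitrary $\{\psi_\lambda\} \in \ell^p_m(\Lambda;L^2)$ for $p < \infty$ is then obtained by density, and the unconditional convergence of $\sum_\lambda \alpha_\lambda(S^*)\psi_\lambda$ in $M^p_m$ follows by applying the same estimate to the tail sequence $\{\psi_\lambda \cdot \chi_{\Lambda \setminus F}(\lambda)\}$ as $F$ exhausts $\Lambda$. The main subtlety is the case $p = \infty$: finite sequences are only weak*-dense in $\ell^\infty_m(\Lambda;L^2)$, so one cannot extend by norm continuity. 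Instead, I would argue via duality using the identity
\begin{equation*}
  \inner{D_S\{\psi_\lambda\}}{\phi}_{M^\infty_{1/v},M^1_v} = \sum_\lambda \inner{\psi_\lambda}{\alpha_\lambda(S)\phi}_{L^2} \quad \text{for } \phi \in M^1_v(\Rd),
\end{equation*}
valid for finite $\{\psi_\lambda\}$, where the right-hand side makes sense in general because $\{\alpha_\lambda(S)\phi\}_\lambda = C_S\phi \in \ell^1_{1/m}(\Lambda;L^2)$ by Theorem \ref{theorem:upperbound} (applied to the $v$-moderate weight $1/m$), and $\ell^1_{1/m}(\Lambda;L^2)$ pairs with $\ell^\infty_m(\Lambda;L^2)$ through \eqref{eq:sequenceduality}. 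This defines $D_S\{\psi_\lambda\}$ as the Banach space adjoint of $C_S: M^1_v(\Rd) \to \ell^1_{1/m}(\Lambda;L^2) \hookrightarrow \ell^1(\Lambda;L^2)$, which is automatically weak*-to-weak* continuous into $M^\infty_{1/v}(\Rd) = (M^1_v)^*$; the weak* convergence of the defining series then follows because finite restrictions converge to $\{\psi_\lambda\}$ in the weak* topology of $\ell^\infty_m(\Lambda;L^2)$.
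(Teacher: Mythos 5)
Your proposal is correct and follows essentially the same route as the paper's proof: the rank-one decomposition of Proposition \ref{prop:innerkernel}, reduction of each term to the Gabor synthesis bound of \cite[Thm.~12.2.4]{Grochenig:2001} via the Cauchy--Schwarz estimate $|V_{\phi_n^{(1)}}\psi_\lambda(\lambda)|\lesssim \|\psi_\lambda\|_{L^2}\|\phi_n^{(1)}\|_{M^1_v}$, density of finite sequences for $p<\infty$ with the tail estimate for unconditional convergence, and a duality pairing against $M^1_v(\Rd)$ test functions (resting on Theorem \ref{theorem:upperbound} applied to $C_S$ on an $\ell^1$-type space) for $p=\infty$. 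Your phrasing of the $p=\infty$ case as the adjoint of $C_S$ is a cosmetic variant of the paper's absolute-convergence computation and is also how the paper packages it in Corollary \ref{cor:weakstar}.
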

\begin{proof}
	First assume that $p<\infty$, and let $\{\psi_\lambda\}_{\lambda \in \Lambda}$ be a finite sequence. Using Proposition \ref{prop:innerkernel} we write $
  S=\sum_{n\in \N} \phi_n^{(1)}\otimes \phi_n^{(2)}$.
 Then one finds using \eqref{eq:shiftrankone} that 
 \begin{align*}
  D_S(\{\psi_\lambda\})&=\sum_{\lambda \in \Lambda} \sum_{n\in \N} V_{\phi_n^{(1)}}\psi_\lambda(\lambda) \pi(\lambda)\phi_n^{(2)}\\
  &= \sum_{n\in \N} \sum_{\lambda \in \Lambda}  V_{\phi_n^{(1)}}\psi_\lambda(\lambda) \pi(\lambda)\phi_n^{(2)}.
\end{align*}
Interchanging the order of summation is allowed as the finiteness of the sum over $\lambda$ implies  absolute convergence in $M^p_m(\Rd)$: by parts (c) and (e) of Proposition \ref{prop:modulationspaces}
\begin{equation*} 
  \|\pi(\lambda)\phi_n^{(2)}\|_{M^p_m}\lesssim v(\lambda) \|\phi_n^{(2)}\|_{M^1_v},
\end{equation*}
and by Cauchy-Schwarz and $M^1_v(\Rd)\hookrightarrow L^2(\Rd)$
\begin{equation} \label{eq:cauchyschwarz}
  |V_{\phi_{n}^{(1)}}\psi_\lambda(\lambda)|=\left|\inner{\psi_\lambda}{\pi(\lambda)\phi_n^{(1)}}_{L^2}\right|\lesssim \|\psi_\lambda\|_{L^2} \|\phi_n^{(1)}\|_{M^1_v}.
\end{equation}
 Hence the absolute convergence follows by  
 \begin{multline*}
  \sum_{n\in \N} \sum_{\lambda \in \Lambda} | V_{\phi_n^{(1)}}\psi_\lambda(\lambda)|\cdot \| \pi(\lambda)\phi_n^{(2)} \|_{M^p_m}  \\
  \begin{aligned}
    &\lesssim  \sum_{n\in \N} \sum_{\lambda \in \Lambda} \|\psi_\lambda\|_{L^2} \|\phi_n^{(1)}\|_{M^1_v}\cdot v(\lambda)\cdot  \|\phi_n^{(2)} \|_{M^1_v} \\
    &= \left(\sum_{n\in \N} \|\phi_n^{(1)}\|_{M^1_v} \|\phi_n^{(2)}\|_{M^1_v} \right)\left(\sum_{\lambda \in \Lambda} \|\psi_\lambda\|_{L^2} v(\lambda)\right) \\
    &<\infty.
  \end{aligned}
\end{multline*}
Now apply the $M^p_m$-norm to our expression for $D_S(\{\psi_\lambda\})$. When passing to the second line, we use \cite[Thm. 12.2.4]{Grochenig:2001}, which is the Gabor frame version of the statement we are proving, and the implicit constant is independent of $p$ and $m$. 
\begin{align*}
   \|D_S(\{\psi_\lambda\})\|_{M^p_m}&\leq \sum_{n\in \N} \left\| \sum_{\lambda \in \Lambda}  V_{\phi_n^{(1)}}\psi_\lambda(\lambda) \pi(\lambda)\phi_n^{(2)} \right\|_{M^p_m} \\
   &\lesssim \sum_{n\in \N}\|\phi_n^{(2)}\|_{M^1_v} \|\{V_{\phi_{n}^{(1)}}\psi_\lambda\}\|_{\ell^{p}_m(\Lambda)} \\
  &\leq \sum_{n\in \N}\|\phi_n^{(1)}\|_{M^1_v}\|\phi_n^{(2)}\|_{M^1_v} \|\{\|\psi_\lambda\|_{L^2}\}\|_{\ell^{p}_m(\Lambda)} \quad \text{ by \eqref{eq:cauchyschwarz}} \\
  &=\|\{\psi_\lambda\}\|_{\ell^p_m(\Lambda;L^2)}\sum_{n\in \N}\|\phi_n^{(2)}\|_{M^1_v}\|\phi_n^{(1)}\|_{M^1_v}.
\end{align*}
Since finite sequences are dense in $\ell^p_m(\Lambda;L^2)$, this shows that $D_S$ extends to a bounded operator $\ell^p_m(\Lambda;L^2)\to M^p_m(\Rd)$ and $\|D_S\|_{\ell^p_m(\Lambda;L^2)\to M^p_m}\lesssim \|S\|_{\beauty_{v\otimes v}}$ follows from part (b) of Proposition \ref{prop:innerkernel}. The same proof works for $p=\infty$ when replacing the sum with a supremum.
For the unconditional convergence for $p<\infty$, let $J\subset \Lambda$ be a finite subset and let $\{\psi_\lambda\}_{\lambda\in \Lambda}\in \ell^p_m(\Lambda;L^2)$. Then 
\begin{align*}
  \|D_S(\{\psi_\lambda\})-\sum_{\lambda \in J} \alpha_\lambda (S^*)\psi_\lambda \|_{M^p_m(\Rd)}^p &=\|D_S(\{\psi_\lambda\}_{\lambda\in \Lambda}-\{\psi_\lambda\}_{\lambda\in J})\|_{M^p_m}^p\\
  &\lesssim \|\{\psi_\lambda\}_{\lambda\in \Lambda}-\{\psi_\lambda\}_{\lambda\in J}\|_{\ell^p_m(\Lambda;L^2)}^p \\
  &= \sum_{\lambda \in \Lambda\setminus J} \|\psi_\lambda\|_{L^2}^p m(\lambda)^p.
\end{align*}
As the sum $\sum_{\lambda \in \Lambda} \|\psi_\lambda\|_{L^2}^p m(\lambda)^p$ converges by assumption, the estimate above shows that for any $\epsilon>0$ we can find a finite subset $J_\epsilon \subset \Lambda$ such that  $\|D_S(\{\psi_\lambda\})-\sum_{\lambda \in J} \alpha_\lambda (S^*)\psi_\lambda \|_{M^p_m}^p<\epsilon$ whenever $J_\epsilon\subset J$. It follows that $\sum_{\lambda \in \Lambda} \alpha_\lambda (S^*)\psi_\lambda$ converges to $D_S(\{\psi_\lambda\})$ in the sense that the net of partial sums converges, which implies unconditional convergence \cite[Prop. 5.3.1]{Grochenig:2001}. \\
If $p=\infty,$ let $\phi\in M^1_v(\Rd)$. Then
\begin{multline*}
  \sum_{\lambda \in \Lambda} |\inner{ \alpha_\lambda(S^*)\psi_\lambda}{\phi}_{M^\infty_{1/v},M^1_v}|  \\
  \begin{aligned}
    &=\sum_{\lambda \in \Lambda} |\inner{ \psi_\lambda}{\alpha_\lambda(S)\phi}_{L^2}| \quad \text{by Prop. \ref{prop:innerkernel} (c) } \\
    &\leq \sum_{\lambda \in \Lambda} \|\psi_\lambda\|_{L^2} \frac{1}{v(\lambda)}  \|\alpha_\lambda(S)\phi\|_{L^2}v(\lambda) \quad \text{ by Cauchy-Schwarz} \\
 &\leq \|\{\psi_\lambda\}\|_{\ell^\infty_{1/v}(\Lambda;L^2)}\|C_S(\phi)\|_{\ell^1_{v}(\Lambda,L^2)} \\
 &\lesssim \|\{\psi_\lambda\}\|_{\ell^\infty_{1/v}(\Lambda;L^2)}  \|S\|_{\beauty_{v\otimes v}} \|\phi\|_{M^1_v} \quad \text{ by Theorem \ref{theorem:upperbound}} .
  \end{aligned}
\end{multline*}
Hence the sum $\sum_{\lambda \in \Lambda} \inner{ \alpha_\lambda(S^*)\psi_\lambda}{\phi}_{M^\infty_{1/v},M^1_v}$ converges absolutely for $\phi\in M^1_v(\Rd)$.
\end{proof}

When $p<\infty$, $\{\psi_\lambda\}_{\lambda \in \Lambda}\in \ell^p_m(\Lambda;L^2)$ and $\phi\in M^{p'}_{1/m}(\Rd)$, one finds that
	\begin{align*}
   \inner{\phi}{D_S(\{\psi_\lambda\})}_{M^{p'}_{1/m},M^p_m}&= \inner{\phi}{\sum_{\lambda \in \Lambda} \alpha_{\lambda}(S^*)\psi_\lambda}_{M^{p'}_{1/m},M^p_m}\\
  &= \sum_{\lambda \in \Lambda}\inner{\phi}{ \alpha_{\lambda}(S^*)\psi_\lambda}_{M^{p'}_{1/m},M^p_m} \\
  &= \sum_{\lambda \in \Lambda} \inner{\alpha_{\lambda}(S)\phi}{ \psi_\lambda}_{L^2} \quad \text{ by Prop. \ref{prop:innerkernel} (c)} \\
  &= \inner{C_S(\phi)}{\{\psi_\lambda\}_{\lambda \in \Lambda}}_{\ell^{p'}_{1/m}(\Lambda;L^2),\ell^p_m(\Lambda;L^2)}.
\end{align*}
In the same way, when $\{\psi_\lambda\}_{\lambda \in \Lambda}\in \ell^{p'}_{1/m}(\Lambda;L^2)$ and $\phi\in M^{p}_m(\Rd)$, one shows that $$\inner{D_S(\{\psi_\lambda\})}{\phi}_{M^{p'}_{1/m},M^p_m}=\inner{\{\psi_\lambda\}}{C_S(\phi)}_{\ell^{p'}_{1/m}(\Lambda;L^2),\ell^p_m(\Lambda;L^2)}.$$

These calculations and the fact that Banach space adjoints are weak*-to-weak*-continuous imply the following result.
\begin{corollary} \label{cor:weakstar}
	Let $p<\infty$. The analysis operator 
	\begin{equation*}
  C_S:M^{p'}_{1/m}(\Rd)\to \ell^{p'}_{1/m}(\Lambda;L^2)
\end{equation*}
is the Banach space adjoint of the synthesis operator
\begin{equation*}
  D_S: \ell^p_m(\Lambda;L^2)\to M^p_m(\Rd).
\end{equation*}
Similarly, the synthesis operator $D_S: \ell^{p'}_{1/m}(\Lambda;L^2)\to M^{p'}_{1/m}(\Rd)$ is the Banach space adjoint of the analysis operator $C_S:M^p_m(\Rd)\to \ell^p_m(\Lambda;L^2)$.
In particular, both $C_S:M^{p'}_{1/m}(\Rd)\to \ell^{p'}_{1/m}(\Lambda;L^2)$ and $D_S: \ell^{p'}_{1/m}(\Lambda;L^2)\to M^{p'}_{1/m}(\Rd)$ are weak*-to-weak*-continuous.
\end{corollary}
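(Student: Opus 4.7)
The plan is straightforward: essentially to package the two bracket identities that appear immediately before the corollary statement into the formal assertion that $C_S$ and $D_S$ are Banach space adjoints of one another, and then invoke the general principle that Banach space adjoints are automatically weak*-to-weak*-continuous.

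First I would note that by Proposition \ref{prop:modulationspaces}(d) the dual space of $M^p_m(\Rd)$ is $M^{p'}_{1/m}(\Rd)$ (for $p<\infty$), and by the remark following the definition of $\ell^p_m(\Lambda;L^2)$ together with equation \eqref{eq:sequenceduality}, the dual of $\ell^p_m(\Lambda;L^2)$ is $\ell^{p'}_{1/m}(\Lambda;L^2)$. Boundedness of the four operators involved has already been established by Theorem \ref{theorem:upperbound} and Theorem \ref{theorem:synthesis}, so the only task is to identify the adjoints and then transfer continuity.

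For the first claim, the computation displayed immediately before the corollary shows that for $\phi\in M^{p'}_{1/m}(\Rd)$ and $\{\psi_\lambda\}\in \ell^p_m(\Lambda;L^2)$ one has
\begin{equation*}
\inner{\phi}{D_S(\{\psi_\lambda\})}_{M^{p'}_{1/m},M^p_m}=\inner{C_S(\phi)}{\{\psi_\lambda\}}_{\ell^{p'}_{1/m}(\Lambda;L^2),\ell^p_m(\Lambda;L^2)},
\end{equation*}
which is exactly the defining identity for $C_S$ being the Banach space adjoint of $D_S:\ell^p_m(\Lambda;L^2)\to M^p_m(\Rd)$. The key inputs used here, all already available, are the unconditional convergence of $D_S(\{\psi_\lambda\})$ in $M^p_m(\Rd)$ (from Theorem \ref{theorem:synthesis}), which lets the duality pairing be moved inside the sum, and part (c) of Proposition \ref{prop:innerkernel}, which turns $\inner{\phi}{\alpha_\lambda(S^*)\psi_\lambda}$ into $\inner{\alpha_\lambda(S)\phi}{\psi_\lambda}_{L^2}$. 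The second displayed identity before the corollary provides the symmetric computation and identifies $D_S:\ell^{p'}_{1/m}(\Lambda;L^2)\to M^{p'}_{1/m}(\Rd)$ as the adjoint of $C_S:M^p_m(\Rd)\to \ell^p_m(\Lambda;L^2)$.

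The final assertion is then immediate from the general fact that the Banach space adjoint of any bounded linear operator is weak*-to-weak*-continuous between the dual spaces, which is already cited from \cite[Thm. 3.1.18]{Megginson:1998} earlier in the paper. Since $p<\infty$, $M^p_m(\Rd)$ and $\ell^p_m(\Lambda;L^2)$ genuinely are preduals of $M^{p'}_{1/m}(\Rd)$ and $\ell^{p'}_{1/m}(\Lambda;L^2)$, so the weak* topologies are the relevant ones. There is no real obstacle: the only mild subtlety is that when $p=1$ and hence $p'=\infty$, the series defining $D_S(\{\psi_\lambda\})$ for $\{\psi_\lambda\}\in \ell^\infty_{1/m}(\Lambda;L^2)$ converges only in the weak* sense, but this is precisely what is needed for the pairing identity to read correctly, and Theorem \ref{theorem:synthesis} already ensures the absolute convergence against any test element of $M^1_v(\Rd)\subset M^p_m(\Rd)$.
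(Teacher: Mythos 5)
Your proposal is correct and follows essentially the same route as the paper: the two duality-bracket computations displayed immediately before the corollary are precisely the defining identities for the adjoint relationships, and the weak*-to-weak*-continuity then follows from the standard fact about Banach space adjoints (cited from Megginson earlier in the paper). Your remarks on the duality identifications of the spaces and the $p'=\infty$ convergence mode are accurate and consistent with Theorem \ref{theorem:synthesis}.
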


Using the Janssen representation, we deduced in Corollary \ref{cor:dualgenerators0} that if $S\in \beauty_{v\otimes v}$ generates a Gabor g-frame, then $\frameop_S^{-1}$ has a representation 
\begin{equation*}
  \frameop_S^{-1}  = \frac{1}{|\Lambda|} \sum_{\lambda^\circ \in \Lambda^\circ} c_{\lambda^\circ} \pi(\lambda^\circ)
\end{equation*}
for some sequence $\{c_{\lambda^\circ}\}\in \ell^1_v(\Lambda^\circ)$. Since $\pi(\lambda^\circ)$ is bounded on any modulation space $M^p_m(\Rd)$ by Proposition \ref{prop:modulationspaces}, we find that $\frameop_S^{-1}$ extends to a bounded operator on any modulation space by
\begin{align*}
  \|\frameop_S^{-1}\psi\|_{M^{p}_m}&\leq \frac{1}{|\Lambda|} \sum_{\lambda^\circ \in \Lambda^\circ} |c_{\lambda^\circ}| \|\pi(\lambda^\circ)\psi\|_{M^p_m} \\
  &\lesssim  \sum_{\lambda^\circ \in \Lambda^\circ} |c_{\lambda^\circ}|v(\lambda^\circ) \|\psi\|_{M^p_m} =  \|\psi\|_{M^p_m}\|\{c_{\lambda^\circ}\}\|_{\ell^1_v(\Lambda^\circ)}.
\end{align*}

Then recall that the canonical dual Gabor g-frame is generated by the operator $S\frameop_S^{-1}$. The next result shows that  $S\frameop_S^{-1}$ also satisfies the assumptions of Theorems \ref{theorem:upperbound} and \ref{theorem:synthesis}. 

\begin{proposition} \label{prop:dualgframeM}
	 If $S\in \beauty_{v\otimes v}$ generates a Gabor g-frame, then $S\frameop_S^{-1}\in \beauty_{v\otimes v}.$
\end{proposition}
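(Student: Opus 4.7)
The plan is to combine the Janssen-style representation of $\frameop_S^{-1}$ from Corollary \ref{cor:dualgenerators0} with the rank-one decomposition of $S$ from Proposition \ref{prop:innerkernel}(a) to express $S\frameop_S^{-1}$ as an absolutely convergent series in $\beauty_{v\otimes v}$.

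First, since $S \in \beauty_{v\otimes v}$ generates a Gabor g-frame, Corollary \ref{cor:dualgenerators0} gives a sequence $\{a_{\lambda^\circ}\}_{\lambda^\circ\in\Lambda^\circ}\in \ell^1_v(\Lambda^\circ)$ with
\begin{equation*}
\frameop_S^{-1}=\sum_{\lambda^\circ\in\Lambda^\circ} a_{\lambda^\circ}\,\pi(\lambda^\circ),
\end{equation*}
where the series converges in the operator norm of $\bo$ (since $v\geq 1$ implies $\{a_{\lambda^\circ}\}\in \ell^1(\Lambda^\circ)$ and each $\pi(\lambda^\circ)$ is unitary). Postcomposing with $S$, which is bounded on $L^2(\Rd)$, therefore yields $S\frameop_S^{-1}=\sum_{\lambda^\circ}a_{\lambda^\circ} S\pi(\lambda^\circ)$ with convergence in operator norm.

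Next, I will bound $\|S\pi(\lambda^\circ)\|_{\beauty_{v\otimes v}}$ uniformly up to a factor $v(\lambda^\circ)$. Using Proposition \ref{prop:innerkernel}(a), write $S=\sum_{n\in\N}\phi_n^{(1)}\otimes\phi_n^{(2)}$ with $\sum_n\|\phi_n^{(1)}\|_{M^1_v}\|\phi_n^{(2)}\|_{M^1_v}<\infty$. A direct check on the rank-one formula gives $(\phi_n^{(1)}\otimes\phi_n^{(2)})\pi(\lambda^\circ)=\phi_n^{(1)}\otimes\pi(\lambda^\circ)^*\phi_n^{(2)}$, so
\begin{equation*}
S\pi(\lambda^\circ)=\sum_{n\in\N}\phi_n^{(1)}\otimes\pi(\lambda^\circ)^*\phi_n^{(2)}.
\end{equation*}
By Proposition \ref{prop:modulationspaces}(e) and the symmetry of $v$, $\|\pi(\lambda^\circ)^*\phi_n^{(2)}\|_{M^1_v}\lesssim v(\lambda^\circ)\|\phi_n^{(2)}\|_{M^1_v}$, and combining this with Proposition \ref{prop:innerkernel}(b) yields
\begin{equation*}
\|S\pi(\lambda^\circ)\|_{\beauty_{v\otimes v}}\lesssim v(\lambda^\circ)\sum_{n\in\N}\|\phi_n^{(1)}\|_{M^1_v}\|\phi_n^{(2)}\|_{M^1_v}\lesssim v(\lambda^\circ)\,\|S\|_{\beauty_{v\otimes v}}.
\end{equation*}

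Finally, summing against $a_{\lambda^\circ}$ gives
\begin{equation*}
\sum_{\lambda^\circ\in\Lambda^\circ}|a_{\lambda^\circ}|\,\|S\pi(\lambda^\circ)\|_{\beauty_{v\otimes v}}\lesssim \|S\|_{\beauty_{v\otimes v}}\sum_{\lambda^\circ\in\Lambda^\circ}|a_{\lambda^\circ}|\,v(\lambda^\circ)=\|S\|_{\beauty_{v\otimes v}}\|\{a_{\lambda^\circ}\}\|_{\ell^1_v(\Lambda^\circ)}<\infty,
\end{equation*}
so $\sum_{\lambda^\circ}a_{\lambda^\circ}S\pi(\lambda^\circ)$ converges absolutely in the Banach space $\beauty_{v\otimes v}$. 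Since $\beauty_{v\otimes v}\hookrightarrow \bo$, this limit also equals the operator-norm limit $S\frameop_S^{-1}$, which therefore lies in $\beauty_{v\otimes v}$. There is no genuine obstacle here; the only care needed is in identifying the two notions of convergence (in $\beauty_{v\otimes v}$ versus in operator norm) to conclude that the $\beauty_{v\otimes v}$-limit really is $S\frameop_S^{-1}$.
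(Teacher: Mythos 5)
Your proof is correct, but it takes a genuinely different route from the paper's. The paper first uses Corollary \ref{cor:dualgenerators0} to deduce (in the displayed estimate just before the proposition) that $\frameop_S^{-1}$ is bounded on $M^1_v(\Rd)$, and then applies $\frameop_S^{-1}$ to the second leg of each rank-one term: writing $S=\sum_n \phi_n^{(1)}\otimes\phi_n^{(2)}$ and using self-adjointness of $\frameop_S^{-1}$, it obtains $S\frameop_S^{-1}=\sum_n \phi_n^{(1)}\otimes(\frameop_S^{-1}\phi_n^{(2)})$, which converges absolutely in $\beauty_{v\otimes v}$. You instead keep the decomposition of $S$ intact and expand $\frameop_S^{-1}$ itself as an $\ell^1_v(\Lambda^\circ)$-combination of time-frequency shifts, reducing the problem to the estimate $\|S\pi(\lambda^\circ)\|_{\beauty_{v\otimes v}}\lesssim v(\lambda^\circ)\|S\|_{\beauty_{v\otimes v}}$; this is in effect a proof that $\beauty_{v\otimes v}$ is a right module over the weighted twisted group algebra generated by $\{\pi(\lambda^\circ)\}$, which is a slightly more structural statement and would apply to $ST$ for any $T$ in that algebra, not just $\frameop_S^{-1}$. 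What the paper's version buys is the explicit decomposition $\sum_n \phi_n^{(1)}\otimes(\frameop_S^{-1}\phi_n^{(2)})$ with the first legs unchanged, which is reused later (in the remark following Corollary \ref{cor:equivnorms}) when $\{\phi_n^{(1)}\}$ is taken orthonormal. Both arguments ultimately rest on the same deep input, namely the twisted Wiener lemma behind Corollary \ref{cor:dualgenerators0}, and your handling of the two notions of convergence via $\beauty_{v\otimes v}\hookrightarrow\bo$ is exactly the right way to identify the limit.
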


\begin{proof}
	Let
	\begin{equation*}
  S=\sum_{n\in \N} \phi_n^{(1)}\otimes \phi_n^{(2)},
\end{equation*} 
be a decomposition of $S$ from Proposition \ref{prop:innerkernel}. For $\psi \in L^2(\Rd)$, this implies that 
\begin{align*}
  S\frameop_S^{-1}\psi&= \sum_{n\in \N} \inner{\frameop_S^{-1}\psi}{\phi_n^{(2)}}_{L^2} \phi_n^{(1)} \\
  &= \sum_{n\in \N} \inner{\psi}{\frameop_S^{-1}\phi_n^{(2)}}_{L^2} \phi_n^{(1)},
\end{align*}
where we have used that $\frameop_S^{-1}$ is positive and therefore self-adjoint. Hence
\begin{equation*}
  S\frameop_S^{-1}=\sum_{n\in \N} \phi_n^{(1)}\otimes (\frameop_S^{-1}\phi_n^{(2)}),
\end{equation*}
and this decomposition converges absolutely in $\beauty_{v\otimes v}$ since 
\begin{align*}
  \sum_{n\in \N}\| \phi_n^{(1)}\otimes (\frameop_S^{-1}\phi_n^{(2)})\|_{\beauty_{v\otimes v}}&=\sum_{n\in \N} \|\phi_n^{(1)}\|_{M^1_{v}} \|\frameop_S^{-1}\phi_n^{(2)}\|_{M^1_{v}} \\
  &\lesssim  \sum_{n\in \N} \|\phi_n^{(1)}\|_{M^1_{v}} \|\phi_n^{(2)}\|_{M^1_{v}} <\infty
\end{align*}
by the aforementioned boundedness of $\frameop_S^{-1}:M^1_v(\Rd)\to M^1_v(\Rd)$.
\end{proof}

\begin{corollary} \label{cor:gaborexpansions}
	Assume that $S\in \beauty_{v\otimes v}$ generates a Gabor g-frame. For any $\psi\in M^p_m(\Rd)$, the expansions
	\begin{align*}
  \psi&= D_SC_{S\frameop_S^{-1}}\psi=\sum_{\lambda\in \Lambda} \alpha_\lambda(S^*)\alpha_\lambda(S\frameop_S^{-1}) \psi=\sum_{\lambda \in \Lambda} \alpha_\lambda(S^*S\frameop_S^{-1})\psi, \\
  \psi&=D_{S\frameop_S^{-1}}C_S\psi =\sum_{\lambda\in \Lambda} \alpha_\lambda((S\frameop_S^{-1})^*)\alpha_\lambda(S) \psi=\sum_{\lambda \in \Lambda} \alpha_\lambda(\frameop_S^{-1}S^*S)\psi
\end{align*}
converge unconditionally in $M^p_m(\Rd)$ for $p<\infty$ and in the weak* topology of $M^\infty_{1/v}(\Rd)$ for $p=\infty$.
\end{corollary}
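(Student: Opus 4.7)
The plan is to reduce the statement to the $L^2$-reconstruction formulas already established in Section \ref{sec:dual} by invoking boundedness of analysis and synthesis operators on modulation spaces, and then extending by a density argument. The heavy lifting has effectively been done: Proposition \ref{prop:dualgframeM} provides that the canonical dual generator $S\frameop_S^{-1}$ also lies in $\beauty_{v\otimes v}$, and Theorems \ref{theorem:upperbound} and \ref{theorem:synthesis} supply the requisite boundedness.

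First I would observe that, since both $S$ and $S\frameop_S^{-1}$ belong to $\beauty_{v\otimes v}$, the operators $C_S$, $C_{S\frameop_S^{-1}}$, $D_S$, $D_{S\frameop_S^{-1}}$ are all bounded between $M^p_m(\Rd)$ and $\ell^p_m(\Lambda;L^2)$, so that $D_SC_{S\frameop_S^{-1}}$ and $D_{S\frameop_S^{-1}}C_S$ are bounded on $M^p_m(\Rd)$. For $\psi\in M^p_m(\Rd)$, the sequence $C_{S\frameop_S^{-1}}\psi$ lies in $\ell^p_m(\Lambda;L^2)$, so Theorem \ref{theorem:synthesis} immediately yields the claimed unconditional (respectively, weak*) convergence of the series $\sum_\lambda \alpha_\lambda(S^*)\alpha_\lambda(S\frameop_S^{-1})\psi$, and \eqref{eq:translateproduct} identifies it with $\sum_\lambda \alpha_\lambda(S^*S\frameop_S^{-1})\psi$. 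The analogous argument handles $D_{S\frameop_S^{-1}}C_S$, using that $(S\frameop_S^{-1})^* = \frameop_S^{-1}S^*$ as $\frameop_S^{-1}$ is positive.

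It remains to identify each composition with the identity on $M^p_m(\Rd)$. From Section \ref{sec:dual} we already have $D_SC_{S\frameop_S^{-1}} = D_{S\frameop_S^{-1}}C_S = I$ on $L^2(\Rd)$, and in particular on the subspace $M^1_v(\Rd) \subset L^2(\Rd) \cap M^p_m(\Rd)$. For $p<\infty$, Proposition \ref{prop:modulationspaces}(g) gives norm-density of $M^1_v(\Rd)$ in $M^p_m(\Rd)$, and both identities extend to all of $M^p_m(\Rd)$ by continuity of the compositions.

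The main obstacle will be the $p=\infty$ case, where one must pass through a weak* topology. Corollary \ref{cor:weakstar} with $p=1$ and $m=v$ shows that $C_S:M^\infty_{1/v}(\Rd)\to \ell^\infty_{1/v}(\Lambda;L^2)$ and $D_S:\ell^\infty_{1/v}(\Lambda;L^2)\to M^\infty_{1/v}(\Rd)$ are Banach space adjoints of operators between $M^1_v(\Rd)$ and $\ell^1_v(\Lambda;L^2)$, hence weak*-to-weak*-continuous; the same applies to $S\frameop_S^{-1}$ in place of $S$. Thus $D_SC_{S\frameop_S^{-1}}$ is weak*-continuous on $M^\infty_{1/v}(\Rd)$ and agrees with the identity on the weak*-dense subspace $M^1_v(\Rd)$ by Proposition \ref{prop:modulationspaces}(g), so it equals the identity on all of $M^\infty_{1/v}(\Rd)$, and in particular on $M^\infty_m(\Rd)\subset M^\infty_{1/v}(\Rd)$ (the inclusion holding because $v$-moderateness of $m$ forces $1/v\lesssim m$).
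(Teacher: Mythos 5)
Your proposal is correct and follows essentially the same route as the paper's proof: boundedness of the compositions via Proposition \ref{prop:dualgframeM} together with Theorems \ref{theorem:upperbound} and \ref{theorem:synthesis}, identification with the identity on the dense (resp.\ weak*-dense) subspace $M^1_v(\Rd)$ using the $L^2$-reconstruction from Section \ref{sec:dual}, and the weak*-continuity from Corollary \ref{cor:weakstar} to handle $p=\infty$. The only (harmless) cosmetic difference is that you run the weak*-density argument in $M^\infty_{1/v}(\Rd)$ and then restrict to $M^\infty_m(\Rd)$, whereas the paper applies Corollary \ref{cor:weakstar} directly on $M^\infty_m(\Rd)$.
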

\begin{proof}
	We prove the result for $D_SC_{S\frameop_S^{-1}},$ the same proof works for $D_{S\frameop_S^{-1}}C_S$. 
	From the previous proposition, we know that $S,S\frameop^{-1}_S\in \beauty_{v\otimes v}$. In particular we know from Theorem \ref{theorem:synthesis} that $D_S$ is bounded from $\ell^p_m(\Lambda;L^2)$ to $M^p_m(\Rd)$, and that $C_{S\frameop^{-1}_S}$ is bounded from $M^p_m(\Rd)$ to $\ell^p_m(\Lambda;L^2)$. Hence $D_SC_{S\frameop_S^{-1}}$ is bounded on $M^p_m(\Rd)$. If $p<\infty$, then the expansions in the statement converge unconditionally by Theorem \ref{theorem:synthesis}. We know that $D_SC_{S\frameop_S^{-1}}$ is the identity operator on $L^2(\Rd)$ from Section \ref{sec:dual}, and as $M^1_v(\Rd)\subset L^2(\Rd)$ is dense in $M^p_m(\Rd)$ by Proposition \ref{prop:modulationspaces} it follows that $D_SC_{\frameop_S^{-1}}$ is the identity operator on $M^p_m(\Rd)$, so the expansions converge to $\psi$. 
	
	For $p=\infty$ the last part of the argument must be slightly modified: $M^1_v(\Rd)$ is only weak*-dense in $M^\infty_m(\Rd)$, so to conclude that $D_SC_{S\frameop_S^{-1}}$ is the identity operator on $M^\infty_m(\Rd)$ we need to use that $D_SC_{S\frameop_S^{-1}}$ is weak*-to-weak*-continuous on $M^\infty_m(\Rd)$ by Corollary \ref{cor:weakstar}. 
\end{proof}

We are now ready to prove one of our main results, namely that Gabor g-frames generated by $S\in \beauty_{v\otimes v}$ define equivalent norms for modulation spaces. By picking $S$ as in Examples \ref{example:multiwindow} and \ref{example:locops}, we recover results for Gabor frames \cite{Feichtinger:1989,Grochenig:2001,Feichtinger:1997} and localization operators \cite{Dorfler:2006,Dorfler:2011}.

\begin{corollary} \label{cor:equivnorms}
Assume that $S\in \beauty_{v\otimes v}$ generates a Gabor g-frame. There exist constants $C,D$ depending on $v$ and $\Lambda$ such that for any $1\leq p \leq \infty$ and $v$-moderate weight $m$ we have
\begin{equation*}
  C \|\psi\|_{M^p_m}\leq  \left( \sum_{\lambda \in \Lambda} \|\alpha_\lambda(S)\|_{L^2}^pm(\lambda)^p\right)^{1/p} \leq  D \|\psi\|_{M^p_m},
\end{equation*}
and $\psi \in M^\infty_{1/v}(\Rd)$ belongs to $M^p_m(\Rd)$ if and only if $$\sum_{\lambda \in \Lambda} \|\alpha_\lambda(S)\|_{L^2}^pm(\lambda)^p<\infty.$$ For $p=\infty$ the sum is replaced by a supremum in the usual way.
\end{corollary}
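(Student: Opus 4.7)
The strategy is to deduce both inequalities from the analysis/synthesis machinery already established, with the reconstruction formula acting as the link between them. The upper bound
\[
\left(\sum_{\lambda\in\Lambda}\|\alpha_\lambda(S)\psi\|_{L^2}^p m(\lambda)^p\right)^{1/p} = \|C_S\psi\|_{\ell^p_m(\Lambda;L^2)} \leq D\,\|\psi\|_{M^p_m}
\]
is immediate from Theorem \ref{theorem:upperbound}, since $S\in\beauty_{v\otimes v}$ and the theorem gives precisely this boundedness of $C_S\colon M^p_m(\Rd)\to \ell^p_m(\Lambda;L^2)$ with constant controlled by $\|S\|_{\beauty_{v\otimes v}}$, uniformly in $p$ and $m$.

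For the lower bound I plan to use the reconstruction formula $\psi=D_{S\frameop_S^{-1}}C_S\psi$ from Corollary \ref{cor:gaborexpansions}. By Proposition \ref{prop:dualgframeM} the dual generator $S\frameop_S^{-1}$ lies again in $\beauty_{v\otimes v}$, so Theorem \ref{theorem:synthesis} gives boundedness $D_{S\frameop_S^{-1}}\colon\ell^p_m(\Lambda;L^2)\to M^p_m(\Rd)$ with constant controlled by $\|S\frameop_S^{-1}\|_{\beauty_{v\otimes v}}$. Combining these,
\[
\|\psi\|_{M^p_m}=\|D_{S\frameop_S^{-1}}C_S\psi\|_{M^p_m}\lesssim \|C_S\psi\|_{\ell^p_m(\Lambda;L^2)},
\]
which is the desired lower bound with $C:=1/\|D_{S\frameop_S^{-1}}\|$. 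The constants $C,D$ thus depend only on $\|S\|_{\beauty_{v\otimes v}}$, $\|S\frameop_S^{-1}\|_{\beauty_{v\otimes v}}$, $v$ and $\Lambda$, but not on $p$ or $m$.

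The remaining assertion is the characterization of membership in $M^p_m(\Rd)$ for $\psi\in M^\infty_{1/v}(\Rd)$. One direction is the upper bound again. Conversely, suppose $\psi\in M^\infty_{1/v}(\Rd)$ satisfies $C_S\psi\in \ell^p_m(\Lambda;L^2)$; note that $C_S\psi$ is well-defined in $L^2$ coordinate-wise thanks to the extension of $\alpha_\lambda(S)$ provided by Proposition \ref{prop:innerkernel}(d). Applying $D_{S\frameop_S^{-1}}$, the partial sums of $D_{S\frameop_S^{-1}}C_S\psi$ converge in $M^p_m(\Rd)$ (norm for $p<\infty$, weak* for $p=\infty$) to some element $\phi\in M^p_m(\Rd)$, while Corollary \ref{cor:gaborexpansions} applied to $\psi\in M^\infty_{1/v}(\Rd)$ says the same partial sums converge to $\psi$ in the weak* topology of $M^\infty_{1/v}(\Rd)$. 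The potentially delicate step is reconciling these two limits: since $m\gtrsim 1/v$ (because $v$-moderateness forces $m\lesssim v$, hence $1/m\lesssim v$), one has continuous inclusions $M^p_m(\Rd)\hookrightarrow M^\infty_m(\Rd)\hookrightarrow M^\infty_{1/v}(\Rd)$ and $M^1_v(\Rd)\hookrightarrow M^1_{1/m}(\Rd)$, so weak* convergence in either $M^p_m$ or $M^\infty_m$ implies weak* convergence in $M^\infty_{1/v}$. Uniqueness of weak* limits in $M^\infty_{1/v}(\Rd)$ then forces $\phi=\psi$, and in particular $\psi\in M^p_m(\Rd)$. This compatibility of topologies is the only point requiring any care; everything else reduces to invoking the preceding machinery.
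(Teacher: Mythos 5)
Your proposal is correct and follows essentially the same route as the paper: the upper bound from Theorem \ref{theorem:upperbound}, the lower bound from the reconstruction $\psi=D_{S\frameop_S^{-1}}C_S\psi$ of Corollary \ref{cor:gaborexpansions} together with Proposition \ref{prop:dualgframeM} and Theorem \ref{theorem:synthesis}, and the membership criterion by applying $D_{S\frameop_S^{-1}}$ to $C_S\psi\in\ell^p_m(\Lambda;L^2)$. Your extra care in reconciling the $M^p_m$-limit with the weak* limit in $M^\infty_{1/v}(\Rd)$ (via $1/v\lesssim m\lesssim v$) only makes explicit a step the paper leaves implicit.
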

\begin{proof}
	 By Theorem \ref{theorem:upperbound} we have
	\begin{equation*}
  \left( \sum_{\lambda \in \Lambda} \|\alpha_\lambda(S)\|_{L^2}^pm(\lambda)^p\right)^{1/p} = \|C_S\psi\|_{\ell^p_m(\Lambda;L^2)} \lesssim \|\psi\|_{M^p_m}
\end{equation*}
 as $C_S$ is bounded. On the other hand, Corollary \ref{cor:gaborexpansions} says that
 \begin{equation*}
  \|\psi\|_{M^p_m} =  \|D_{S\frameop_S^{-1}} C_S\psi\|_{M^p_m}  \lesssim \|C_S\psi\|_{\ell^p_m(\Lambda;L^2)}=\left( \sum_{\lambda \in \Lambda} \|\alpha_\lambda(S)\|_{L^2}^pm(\lambda)^p\right)^{1/p},
\end{equation*}
where we have used that $D_{S\frameop_S^{-1}}:\ell^p_m(\Lambda;L^2)\to M^p_m(\Rd)$ is bounded by Proposition \ref{prop:dualgframeM} and Theorem \ref{theorem:synthesis}. 

Finally, if $\sum_{\lambda \in \Lambda} \|\alpha_\lambda(S)\|_{L^2}^pm(\lambda)^p<\infty$, then $C_S(\psi)\in \ell^p_m(\Lambda;L^2)$. As $D_{S\frameop_S^{-1}}$ is bounded  $\ell^p_m(\Lambda;L^2)\to M^p_m(\Rd)$, its follows from $\psi = D_{S\frameop_S^{-1}}C_S \psi$ that $\psi\in M^p_m(\Rd)$.
\end{proof}

\begin{remark}

	In this section we have assumed $S\in \beauty_{v\otimes v}$, but the result also holds for operators $S\in \tco$ that can be written $$S=\sum_{n\in \N} \phi_n^{(1)}\otimes \phi_n^{(2)}$$ where $\sum_{n\in \N}  \|\phi_n^{(2)}\|_{M^1_v}<\infty$ and $\{\phi_n^{(1)}\}_{n\in \N}$ is orthonormal in $L^2(\Rd)$. The proofs of Theorems \ref{theorem:upperbound} and \ref{theorem:synthesis} still work, with upper bound $\sum_{n\in \N}  \|\phi_n^{(2)}\|_{M^1_v}$ for the operator norms of $C_S$ and $D_S$ (in the original proofs we use $\|\phi_n^{(1)}\|_{L^2}\lesssim \|\phi_n^{(1)}\|_{M^1_v}$, using $\|\phi_n^{(1)}\|_{L^2}=1$ instead leads to this modified result).  Since $S^*S=\sum_{n\in \N} \phi_n^{(2)}\otimes \phi_n^{(2)}\in \beauty$, we can still use the Janssen representation to get that $\frameop_S^{-1}$ is bounded on $M^1_v(\Rd)$, and the proof of Proposition \ref{prop:dualgframeM} shows that $$S\frameop_S^{-1}=\sum_{n\in \N}  \phi_n^{(1)}\otimes \frameop_S^{-1}\phi_n^{(2)},$$ hence $S\frameop_S^{-1}$ is of the same form. The proofs of the corollaries above still work without change. In particular, this shows that our treatment of multi-window Gabor frames in Example \ref{example:multiwindow} is compatible with the theory of this section. 
\end{remark}

\subsection{Alternative characterization of Gabor g-frames and multi-window Gabor frames of eigenfunctions}  \label{sec:multiwindow}  
The norm equivalences in Corollary \ref{cor:equivnorms} were proved for localization operators in \cite{Dorfler:2006,Dorfler:2011}. This section is mainly a reinterpretation and slight extension of the results in \cite{Dorfler:2011} in terms of Gabor g-frames -- the main result is Theorem \ref{theorem:alternativecharacterization}, which shows that a surprising characterization of Gabor frames from \cite{Grochenig:2007} holds for Gabor g-frames.
We first need to understand the singular value decomposition of operators in $\beauty_{v\otimes v}$. The following is due to \cite{Dorfler:2011} when $S$ is a localization operator, and our proof is a slight modification of their proof to allow general $S\in \beauty_{v\otimes v}$. 
\begin{lemma} \label{lem:svd}
	Assume that $S\in \beauty_{v\otimes v}$. There exist $N_0\in \mathbb{N}\cup \{\infty\}$, orthonormal systems $\left\{ \xi_n \right\}_{n=1}^{N_0},$ $\left\{ \varphi_n \right\}_{n=1}^{N_0}$ in $L^2(\Rd)$ and a sequence $\left\{ s_n \right\}_{n=1}^{N_0}\in \ell^1$ of positive numbers   with 
	\begin{equation} \label{eq:svd}
  S=\sum_{n=1}^{N_0} s_n \xi_n \otimes \varphi_n
\end{equation}
as an operator on $L^2(\Rd)$. Furthermore, $\varphi_n,\xi_n\in M^1_{v}(\Rd)$, and for $\lambda \in \Lambda$ the expansion
\begin{equation} \label{eq:svddual}
  \alpha_\lambda(S)\psi = \sum_{n=1}^{N_0} s_n \inner{\psi}{\pi(\lambda)\varphi_n}_{M^\infty_{1/v},M^1_{v}} \pi(\lambda)\xi_n
\end{equation}holds even for $\psi \in M^\infty_{1/v}(\Rd)$, with convergence of the sum in $L^2(\Rd)$.
\end{lemma}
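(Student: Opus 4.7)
The plan has three stages. First I invoke the standard singular value decomposition: since $\beauty_{v\otimes v} \hookrightarrow \tco$, $S$ is trace class and the discussion of Section \ref{sec:tchs} furnishes an expansion $S = \sum_{n=1}^{N_0} s_n \xi_n \otimes \varphi_n$ with orthonormal systems $\{\xi_n\},\{\varphi_n\}$ in $L^2(\Rd)$ and singular values $\{s_n\} \in \ell^1$. Restricting $N_0$ to the number of nonzero singular values (rank-one terms with $s_n=0$ may be dropped), I may assume $s_n > 0$ throughout.

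Next I would upgrade the eigenfunctions to $M^1_v(\Rd)$. From the SVD one has $S^*S \varphi_n = s_n^2 \varphi_n$, and Corollary \ref{cor:composition} gives $S^*S \in \beauty_{v\otimes v}$. By Proposition \ref{prop:innerkernel}(d) the operator $S^*S$ therefore extends to a bounded map $M^\infty_{1/v}(\Rd) \to M^1_v(\Rd)$. Since $v\geq 1$ yields $L^2(\Rd)=M^2(\Rd) \hookrightarrow M^\infty_{1/v}(\Rd)$ (Proposition \ref{prop:modulationspaces}(c)), dividing by $s_n^2$ gives $\varphi_n = s_n^{-2} S^*S\varphi_n \in M^1_v(\Rd)$; then $\xi_n = s_n^{-1} S\varphi_n \in M^1_v(\Rd)$ by the same reasoning applied to $S$ itself.

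For \eqref{eq:svddual}, I first absorb the time-frequency shift by setting $\psi' = \pi(\lambda)^*\psi \in M^\infty_{1/v}(\Rd)$; since $\pi(\lambda)$ is an $L^2$-isometry and satisfies $\inner{\pi(\lambda)^*\psi}{\varphi_n}_{M^\infty_{1/v},M^1_v}=\inner{\psi}{\pi(\lambda)\varphi_n}_{M^\infty_{1/v},M^1_v}$, it suffices to prove
\begin{equation*}
S\psi' = \sum_{n=1}^{N_0} s_n \inner{\psi'}{\varphi_n}_{M^\infty_{1/v},M^1_v} \xi_n \qquad \text{in } L^2(\Rd).
\end{equation*}
The coefficients are computed by combining Proposition \ref{prop:innerkernel}(c) with the identity $S^*\xi_n = s_n\varphi_n$ (immediate from $S^* = \sum_m s_m \varphi_m \otimes \xi_m$ and orthonormality of $\{\xi_m\}$): since $\xi_n \in M^1_v(\Rd)$ by Step 2,
\begin{equation*}
\langle S\psi',\xi_n\rangle_{L^2} = \inner{\psi'}{S^*\xi_n}_{M^\infty_{1/v},M^1_v} = s_n \inner{\psi'}{\varphi_n}_{M^\infty_{1/v},M^1_v}.
\end{equation*}
A Parseval argument in $L^2(\Rd)$ then yields the desired identity provided $S\psi'$ lies in the $L^2$-closure of the linear span of $\{\xi_n\}$. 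This I would establish by picking $\psi'_k \in M^1_v(\Rd)$ weak*-convergent to $\psi'$ (Proposition \ref{prop:modulationspaces}(g) applied with the $v$-moderate weight $1/v$), noting that each $S\psi'_k \in L^2(\Rd)$ already lies in that closure by the classical SVD, and passing to the limit via the weak*-to-norm continuity in Proposition \ref{prop:innerkernel}(d).

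The main obstacle is this final range argument. The most direct route would be to apply Proposition \ref{prop:innerkernel}(c) term-by-term to the SVD, but that proposition demands a decomposition converging \emph{absolutely in $\beauty_{v\otimes v}$}, whereas the SVD is only guaranteed to converge in trace norm. The $M^1_v$-estimates produced by Step 2 are only $\|\xi_n\|_{M^1_v},\|\varphi_n\|_{M^1_v} \lesssim s_n^{-1}$, so $\sum_n s_n\|\xi_n\|_{M^1_v}\|\varphi_n\|_{M^1_v} \lesssim \sum_n s_n^{-1}$, which diverges in general. The weak*-to-norm continuity of Proposition \ref{prop:innerkernel}(d) is precisely what sidesteps the need for absolute summability.
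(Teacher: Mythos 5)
Your proposal is correct and follows essentially the same route as the paper: the classical SVD, the regularity $\xi_n,\varphi_n\in M^1_v(\Rd)$ extracted from the boundedness $M^\infty_{1/v}\to M^1_v$ of Proposition \ref{prop:innerkernel} (the paper reads it off directly from $S\varphi_n=s_n\xi_n$ and $S^*\xi_n=s_n\varphi_n$ rather than via $S^*S$, but this is cosmetic), and the same residual argument --- the paper writes $S\psi=\sum_n\langle S\psi,\xi_n\rangle\xi_n+\gamma$ and kills $\gamma$ by approximating $\psi$ weak* and using the weak*-to-norm continuity of Proposition \ref{prop:innerkernel}(d), which is exactly your closed-span limit argument. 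The only fine point is that weak*-density of $M^1_v(\Rd)$ in $M^\infty_{1/v}(\Rd)$ yields nets rather than sequences a priori; the paper secures a weak*-convergent \emph{sequence} by citing the explicit construction in the proof of \cite[Cor. 7]{Dorfler:2011}.
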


\begin{proof}
	The existence of $\left\{ \xi_n \right\}_{n=1}^{N_0}$, $\left\{ \varphi_n \right\}_{n=1}^{N_0}$ and $\left\{ s_n \right\}_{n=1}^{N_0}$ with these properties is the singular value decomposition from Section \ref{sec:tchs}.
	To see that $\xi_n\in M^1_{v}(\Rd)$, note that Proposition \ref{prop:innerkernel} says that $S:M^\infty_{1/v}(\Rd)\to M^1_{v}(\Rd)$. From \eqref{eq:svd} one obtains that $M^1_{v}(\Rd) \ni S\varphi_n=s_n \xi_n$, which forces $\xi_n \in M^1_{v}(\Rd)$ when $s_n\neq0$. Since $S^*\in \beauty_{v\otimes v}$ by Proposition \ref{prop:innerkernel}, the same argument as above gives that $M^1_{v}(\Rd) \ni S^*\xi_n=s_n \varphi_n$, so $\varphi_n\in M^1_{v}(\Rd)$.
	
	We prove the expansion \eqref{eq:svddual} for $\lambda= 0$, without loss of generality. If $\psi\in M^\infty_{1/v}(\Rd)$, we know from Proposition \ref{prop:innerkernel} that $S\psi \in M^1_{v}(\Rd)\subset L^2(\Rd)$. Thus we may find $\gamma\in L^2(\Rd)$ such that 	\begin{equation*}
  S\psi=\sum_{n=1}^{N_0} \inner{S\psi}{\xi_n}_{L^2} \xi_n + \gamma,
\end{equation*}
where $\gamma \perp \xi_n$ for each $n\leq N_0$. The sum converges in $L^2(\Rd)$ as $S\psi \in L^2(\Rd)$ and the set $\left\{ \xi_n \right\}_{n=1}^{N_0}$ is orthonormal. By Proposition \ref{prop:innerkernel}, we get
\begin{equation*}
  \inner{S\psi}{\xi_n}_{L^2}=\inner{S\psi}{\xi_n}_{M^\infty_{1/v},M^1_{v}}=\inner{\psi}{S^*\xi_n}_{M^\infty_{1/v},M^1_{v}}=s_n\inner{\psi}{ \varphi_n}_{M^\infty_{1/v},M^1_{v}},
\end{equation*}
 hence we have shown 
\begin{equation*}
  S\psi = \sum_{n=1}^{N_0} s_n\inner{\psi}{ \varphi_n}_{M^\infty_{1/v},M^1_{v}} \xi_n + \gamma,
\end{equation*}
and it simply remains to show that $\gamma=0$. Note that $\|\gamma\|_{L^2}^2=\inner{S\psi}{\gamma}_{L^2}$. As is shown in the proof of \cite[Cor. 7]{Dorfler:2011}, we can pick a sequence $\left\{ \psi_i \right\}_{i\in \N}$ in $L^2(\Rd)$ that converges to $\psi$ in the weak* topology of $M^\infty_{1/v}(\Rd)$. Then $\inner{S\psi_i}{\gamma}_{L^2}=0$, since \eqref{eq:svd} shows that $S\psi_i$ can be expanded in terms of the $\xi_n$, and $\gamma$ is orthogonal to each $\xi_n$. However, $S$ maps  weak*-convergent sequences in $M^\infty_{1/v}(\Rd)$ into norm convergent sequences in $M^1_{v}(\Rd)$, hence $S\psi_i\to S\psi$ in $L^2(\Rd)$ and
\begin{equation*}
  0=\inner{S\psi_i}{\gamma}_{L^2}\to \inner{S\psi}{\gamma}_{L^2}=\|\gamma\|_{L^2},
\end{equation*}
which completes the proof. 
\end{proof}

\begin{remark} 
The singular value decomposition in Lemma \ref{lem:svd} should be compared to the decomposition from Proposition \ref{prop:innerkernel}. There is one clear advantage to the singular value decomposition
$
  S=\sum_{n=1}^{N_0} s_n \xi_n \otimes \varphi_n,
$
namely that the systems $\left\{ \xi_n \right\}_{n=1}^{N_0}$ and $\left\{ \varphi_n \right\}_{n=1}^{N_0}$ are orthonormal. The disadvantage of the singular value decomposition is that, unlike the decomposition from Proposition \ref{prop:innerkernel}, it does not necessarily converge absolutely in the norm of $\beauty_{v\otimes v}$. In other words, we cannot guarantee that $\sum_{n=1}^{N_0} s_n \|\xi_n\|_{M^1_v}\|\varphi_n\|_{M^1_v}<\infty$. This was recently proved in \cite{Balan:2018}, solving a problem first posed by Hans Feichtinger. 
\end{remark}

 The following result is used in the proof of \cite[Lem. 9]{Dorfler:2011} for localization operators $S$. Our proof is a slight modification of the proof in \cite{Dorfler:2011} to allow general $S\in \beauty$.

\begin{proposition} \label{prop:MWframe}
	Assume that $S\in \beauty$ and let $\{\varphi_n\}_{n=1}^{N_0}$ be as in Lemma \ref{lem:svd}. If $C_S:M^\infty(\Rd)\to \ell^{\infty}(\Lambda;L^2)$ is injective,  then there is some $N\leq N_0$ such that $\{\varphi_n\}_{n=1}^N\subset M^1_v(\Rd)$ generate a multi-window Gabor frame.
\end{proposition}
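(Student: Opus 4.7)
The strategy combines three ingredients: the singular value decomposition from Lemma \ref{lem:svd}, the multi-window version of Gr\"ochenig's ``Gabor frames without inequalities'' characterization \cite{Grochenig:2007}, and a compactness argument to pass from the infinite SVD family $\{\varphi_n\}_{n=1}^{N_0}$ to a finite subfamily.

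Using the SVD $S=\sum_n s_n\,\xi_n\otimes\varphi_n$ and the expansion \eqref{eq:svddual}, the orthonormality of $\{\pi(\lambda)\xi_n\}_n$ at a fixed $\lambda$ yields
\[
\|\alpha_\lambda(S)\psi\|_{L^2}^2=\sum_{n=1}^{N_0}s_n^2\,|V_{\varphi_n}\psi(\lambda)|^2 \qquad \text{for } \psi\in M^\infty(\Rd).
\]
Since every $s_n>0$, injectivity of $C_S$ on $M^\infty(\Rd)$ is equivalent to the infinite multi-window Gabor system $\{\pi(\lambda)\varphi_n : n\leq N_0,\,\lambda\in\Lambda\}$ having trivial annihilator in $M^\infty(\Rd)$. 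The multi-window version of the main theorem of \cite{Grochenig:2007} (see also the localization-operator argument in \cite{Dorfler:2011}) asserts that a \emph{finite} family $\varphi_1,\ldots,\varphi_N\in M^1(\Rd)$ generates a multi-window Gabor frame precisely when its $M^\infty$-annihilator is trivial. Thus it suffices to produce a finite $N\leq N_0$ whose truncated family has trivial $M^\infty$-annihilator.

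I would extract such an $N$ by contradiction. If no such $N$ exists, choose for each $N$ some $0\ne\psi_N\in M^\infty$ with $V_{\varphi_n}\psi_N(\lambda)=0$ for $n\leq N,\lambda\in\Lambda$, and let $S_N=\sum_{n\leq N}s_n\,\xi_n\otimes\varphi_n$, so that $\alpha_\lambda(S)\psi_N=\alpha_\lambda(S-S_N)\psi_N$ for every $\lambda$. After choosing a suitable normalization and, if needed, translating $\psi_N$ by a lattice element (which preserves the annihilator condition), extract a weak-$*$ subnet $\psi_{N_k}\to \psi$ in $M^\infty$. Weak-$*$ continuity of the evaluations $\eta\mapsto \langle\eta,\pi(\lambda)\varphi_n\rangle_{M^\infty,M^1}$ forces $V_{\varphi_n}\psi(\lambda)=0$ for all $n$ and $\lambda$, and then the hypothesis gives $\psi=0$. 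The contradiction will come from showing that $\psi\ne 0$.

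The principal obstacle is this last step: weak-$*$ limits of $M^\infty$-unit vectors can vanish, so the normalization must be tied to data that survives the weak-$*$ limit. The key tool is Proposition \ref{prop:innerkernel}(d), which says that $S\in\beauty$ sends weak-$*$ convergent sequences in $M^\infty$ to norm-convergent sequences in $M^1$. Using that $\sum_n s_n<\infty$ forces $\|S-S_N\|_{\mathcal{L}(L^2)}\to 0$, a normalization tied to $C_S\psi_N$ (for instance, normalizing so that $\|S\,\pi(\lambda_N)^*\psi_N\|_{L^2}$ is comparable to $\|C_S\psi_N\|_{\ell^\infty(\Lambda;L^2)}$ for a maximizing lattice translate $\lambda_N\in\Lambda$) should render this ``compactness'' of $S$ usable to guarantee a nonvanishing limit $\psi$ and the desired contradiction. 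Making this limit argument rigorous is the technical heart of the proof.
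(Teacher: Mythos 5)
Your overall strategy coincides with the paper's: reduce to the ``frames without inequalities'' characterization (a finite family $\{\varphi_n\}_{n=1}^N\subset M^1(\Rd)$ generates a multi-window Gabor frame iff its $M^\infty$-annihilator is trivial, which is \cite[Lem. 3]{Dorfler:2011}), and then show that if every truncated family has a non-trivial annihilator $\mathcal{W}_N$, the intersection $\bigcap_N \mathcal{W}_N$ is still non-trivial, producing $0\neq\eta$ with $C_S\eta=0$ via \eqref{eq:svddual}. The paper simply cites \cite[Lem. 10]{Dorfler:2011} for the intersection step; you instead try to prove it, and this is where the proposal has a genuine gap — one you yourself flag as ``the technical heart of the proof'' and leave open.

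The specific repair you sketch does not close the gap. To extract a weak*-convergent subnet of $\{\psi_N\}$ you need $\sup_N\|\psi_N\|_{M^\infty}<\infty$; to conclude $\psi\neq 0$ from Proposition \ref{prop:innerkernel}(d) you need, after the lattice translation, a lower bound $\liminf_N\|S\psi_N\|_{L^2}>0$. A normalization tied to $\|C_S\psi_N\|_{\ell^\infty(\Lambda;L^2)}$ cannot deliver both: there is no a priori inequality $\|C_S\eta\|_{\ell^\infty(\Lambda;L^2)}\gtrsim\|\eta\|_{M^\infty}$ (such a bound is essentially what the whole proposition is after), so normalizing $\|C_S\psi_N\|_{\ell^\infty}=1$ may force $\|\psi_N\|_{M^\infty}\to\infty$, while normalizing $\|\psi_N\|_{M^\infty}=1$ may allow $\|C_S\psi_N\|_{\ell^\infty}\to 0$, in which case the $S$-based non-vanishing argument yields nothing. (Note also that $\|S-S_N\|_{\beauty}$ need not tend to $0$ — see the remark after Lemma \ref{lem:svd} — so you cannot trade $C_S\psi_N=C_{S-S_N}\psi_N$ for smallness in the relevant operator norm.) The argument that actually works (and is the content of \cite[Lem. 10]{Dorfler:2011}) does not route the non-vanishing through $S$ at all: normalize $\|\psi_N\|_{M^\infty}=\sup_z|V_{\phi_0}\psi_N(z)|=1$, pick $z_N$ with $|V_{\phi_0}\psi_N(z_N)|\geq\tfrac12$, translate by the nearest lattice point (which preserves membership in $\mathcal{W}_N$) so that the peak lands in a fixed fundamental domain, and use that on bounded sets of $M^\infty$ the short-time Fourier transforms are equicontinuous, so weak* convergence implies locally uniform convergence of $V_{\phi_0}\psi_N$; the limit then satisfies $|V_{\phi_0}\psi(w)|\geq\tfrac12$ and is non-zero. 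If you either cite \cite[Lem. 10]{Dorfler:2011} as the paper does, or replace your normalization by this STFT-based one, the rest of your argument goes through.
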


\begin{proof}
   Assume that, for any $N\leq N_0$, $\{\varphi_n\}_{n=1}^N$ does not generate a multi-window Gabor frame. Consider the set
    \begin{equation*}
    \mathcal{W}_N=\{\eta \in M^\infty(\Rd): \inner{\eta}{\pi(\lambda) \varphi_n}_{M^\infty,M^1}=0 \text{ for any } \lambda \in \Lambda, n=1,...,N. \}
  \end{equation*}
  By \cite[Lem. 3]{Dorfler:2011}, $\mathcal{W}_N$ is a non-trivial subspace of $M^\infty(\Rd)$, and by  \cite[Lem. 10]{Dorfler:2011}, the intersection of all $\mathcal{W}_N$ for $N\leq N_0$ is a non-trivial subspace of $M^\infty(\Rd)$. Let $\eta$ be a non-zero element from this intersection, meaning that 
  \begin{equation*}
    \inner{\eta}{\pi(\lambda)\varphi_n }_{M^\infty,M^1}=0 \text{ for any } \lambda \in \Lambda, n\leq N_0.
  \end{equation*}
  By \eqref{eq:svddual}, we have that $\alpha_\lambda (S) \eta=\sum_{n=1}^{N_0} s_n \inner{\eta}{\pi(\lambda)\varphi_n}_{M^\infty,M^1} \pi(\lambda) \xi_n=0$ for any $\lambda \in \Lambda$, since $\inner{\eta}{\pi(\lambda)\varphi_n }_{M^\infty,M^1}=0$ for $n\leq N_0$. This means that $C_S\eta=0$.  Thus $\eta=0$, which contradicts our assumption. Hence there is an $N\leq N_0$ such that $\{\varphi_n\}_{n=1}^N$ generates a multi-window Gabor frame.   \end{proof}

For Gabor frames, the following theorem is one of the main results of \cite{Grochenig:2007}, and the reader who has consulted the proof of Proposition \ref{prop:MWframe} may have noted that the Gabor frame-version of the statement is the key to the proof of that proposition. 
\begin{theorem} \label{theorem:alternativecharacterization}
	Let $S\in \beauty.$ $S$ generates a Gabor g-frame if and only if $C_S:M^\infty(\Rd)\to \ell^{\infty}(\Lambda;L^2)$ is injective.
\end{theorem}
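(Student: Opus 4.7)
Both directions reduce to results already established in the paper.

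The forward direction is short. Since $S\in\beauty=\beauty_{1\otimes 1}$, we may take $v\equiv 1$ in Corollary~\ref{cor:gaborexpansions}, which with $p=\infty$ and $m\equiv 1$ gives the reconstruction formula
\begin{equation*}
  \psi=D_{S\frameop_S^{-1}}C_S\psi\quad\text{for every }\psi\in M^\infty(\Rd),
\end{equation*}
with convergence in the weak* topology of $M^\infty(\Rd)$. In particular, $C_S\psi=0$ implies $\psi=0$, so $C_S:M^\infty(\Rd)\to\ell^\infty(\Lambda;L^2)$ is injective.

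For the converse, assume $C_S$ is injective. The strategy is to extract a multi-window Gabor frame from the singular value decomposition of $S$ and then transfer its frame bounds to $S$. Write $S=\sum_{n=1}^{N_0}s_n\xi_n\otimes\varphi_n$ as in Lemma~\ref{lem:svd}, where $s_n>0$, the systems $\{\xi_n\}$ and $\{\varphi_n\}$ are orthonormal in $L^2(\Rd)$, and $\xi_n,\varphi_n\in M^1(\Rd)$. By Proposition~\ref{prop:MWframe}, there exists $N\leq N_0$ such that $\{\varphi_n\}_{n=1}^N$ generates a multi-window Gabor frame over $\Lambda$; let $A'>0$ be a lower frame bound.

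The remainder is a direct computation. Using \eqref{eq:shiftrankone} and the SVD,
\begin{equation*}
\alpha_\lambda(S)\psi=\sum_{n=1}^{N_0}s_n V_{\varphi_n}\psi(\lambda)\,\pi(\lambda)\xi_n,
\end{equation*}
and the orthonormality of $\{\pi(\lambda)\xi_n\}_{n=1}^{N_0}$ gives
\begin{equation*}
\|\alpha_\lambda(S)\psi\|_{L^2}^2=\sum_{n=1}^{N_0}s_n^2|V_{\varphi_n}\psi(\lambda)|^2.
\end{equation*}
Setting $c:=\min_{1\leq n\leq N}s_n^2>0$ and truncating to $n\leq N$,
\begin{equation*}
\sum_{\lambda\in\Lambda}\|\alpha_\lambda(S)\psi\|_{L^2}^2\geq c\sum_{\lambda\in\Lambda}\sum_{n=1}^N|V_{\varphi_n}\psi(\lambda)|^2\geq cA'\|\psi\|_{L^2}^2,
\end{equation*}
which is the lower g-frame bound. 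The upper bound is supplied by Corollary~\ref{cor:beautyupperbound} since $S\in\beauty$, so $S$ generates a Gabor g-frame.

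The substantive work is packaged into Proposition~\ref{prop:MWframe}, where the injectivity of $C_S$ on $M^\infty(\Rd)$ is combined with the expansion \eqref{eq:svddual} to rule out a common nonzero element in all the annihilator spaces $\mathcal{W}_N$. Once that proposition is available, the transition from the multi-window Gabor frame lower bound for $\{\varphi_n\}_{n=1}^N$ to the Gabor g-frame lower bound for $S$ is a mechanical calculation that exploits nothing more than the positivity of each singular value $s_n$.
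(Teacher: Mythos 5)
Your proof is correct and follows essentially the same route as the paper: the forward direction via the reconstruction formula $\psi=D_{S\frameop_S^{-1}}C_S\psi$ on $M^\infty(\Rd)$ from Corollary \ref{cor:gaborexpansions}, and the converse by combining the upper bound from Corollary \ref{cor:beautyupperbound} with the multi-window Gabor frame extracted via Lemma \ref{lem:svd} and Proposition \ref{prop:MWframe}, using the identity $\|\alpha_\lambda(S)\psi\|_{L^2}^2=\sum_n s_n^2|V_{\varphi_n}\psi(\lambda)|^2$ and the positivity of the singular values. No gaps.
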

\begin{proof}
	If $S$ generates a Gabor g-frame, $D_{S\frameop^{-1}_S}C_S$ is the identity operator on $M^\infty(\Rd)$ by Corollary \ref{cor:gaborexpansions}, hence $C_S$ is injective. Then assume that $C_S$ is injective. Since $S\in \beauty$, Corollary \ref{cor:beautyupperbound} says that the upper g-frame bound in \eqref{eq:gaborgframes} is satisfied. For the lower bound, Lemma \ref{lem:svd} and Proposition \ref{prop:MWframe} say that $S=\sum_{n=1}^{N_0} s_n \xi_n \otimes \varphi_n$, where $\{\varphi_n\}_{n=1}^N$ generate a multi-window Gabor frame for some $N\leq N_0$. Note that $$\|\alpha_{\lambda}(S)\psi\|_{L^2}^2=\inner{\alpha_\lambda(S)\psi}{\alpha_\lambda(S)\psi}_{L^2}=\inner{\alpha_\lambda(S^*S)\psi}{\psi}_{L^2}.$$
	By the decomposition $S=\sum_{n=1}^{N_0} s_n \xi_n \otimes \varphi_n$ and orthonormality of $\{\xi_n\}_{n=1}^{N_0}$, we get
	\begin{equation*}
  \alpha_\lambda(S^*S)=\sum_{n=1}^{N_0}s_{n}^2 \pi(\lambda)\varphi_n \otimes \pi(\lambda)\varphi_n,
\end{equation*}
hence 
 \begin{align*}
  \sum_{\lambda \in \Lambda} \|\alpha_\lambda(S)\psi\|_{L^2}^2 &= \sum_{\lambda \in \Lambda} \inner{\sum_{n=1}^{N_0}s_{n}^2 V_{\varphi_n}\psi(\lambda)\pi(\lambda)\varphi_n}{\psi}_{L^2} \\
  &= \sum_{\lambda \in \Lambda} \sum_{n=1}^{N_0}s_{n}^2 |V_{\varphi_n}\psi(\lambda)|^2 \\
  &\geq \sum_{\lambda \in \Lambda} \sum_{n=1}^N s_{n}^2 |V_{\varphi_n}\psi(\lambda)|^2 \\
  &\gtrsim  \|\psi\|_2^2,
\end{align*}
since  $\{\varphi_n\}_{n=1}^N$ generate a multi-window Gabor frame and $s_n>0$ for $n\leq N$.
	\end{proof}

\subsection{Localization operators and time-frequency partitions} \label{sec:locops}
In \cite{Dorfler:2006,Dorfler:2011}, the methods from the previous section were used to prove the norm equivalence in Corollary \ref{cor:equivnorms} for the localization operators $A_h^\varphi$ in Example \ref{example:locops}, i.e. assuming $0\neq \varphi\in M^1_v(\Rd)$ and $h\in L^1_v(\Rdd)$ a non-negative function satisfying $$A'\leq \sum_{\lambda \in \Lambda} h(z-\lambda)\leq B' \quad \text{ for all } z\in \Rdd$$ for some $A',B'>0$. Their proof consists of applying Proposition \ref{prop:MWframe} to obtain multi-window Gabor frames of eigenfunctions of localization operators to reduce the statement to the fact that multi-window Gabor frames give equivalent norms for $M^p_m(\Rd)$. Since inserting $p=2$ and $m\equiv 1$ in Corollary \ref{cor:equivnorms} gives the Gabor g-frame inequality, this means in particular that these localization operators generate Gabor g-frames.  

\begin{remark}
	Obtaining multi-window Gabor frames consisting of eigenfunctions of localization operators is itself an interesting result. D\"orfler and Romero \cite{Dorfler:2014} use techniques from \cite{Romero:2012} to obtain frames consisting of eigenfunctions of localization operators in more general settings. If $S=A_{\chi_\Omega}^\varphi$, then $\alpha_\lambda (S) = A_{\chi_{\Omega+\lambda}}^\varphi$. In this sense, applying $\alpha_\lambda$ corresponds to covering $\Rdd$ by shifts of $\Omega$, and the results of \cite{Dorfler:2014} consider much more general coverings of $\Rdd$ when $S$ is a localization operator.
\end{remark}

In order to apply the machinery of Section \ref{sec:eqnorms} to localization operators $A_h^\varphi$, we need to show that $A_h^\varphi\in \beauty_{v\otimes v}$. The next proposition shows that this is true if we assume the stronger condition $h\in L^1_{v^2}(\Rdd)$.
\begin{proposition} \label{prop:locops}
	Let $\varphi \in M^1_v(\Rd)$ and  $h\in L^1_{v^2}(\Rdd)$. Then $A_h^\varphi\in \beauty_{v\otimes v}$. 
\end{proposition}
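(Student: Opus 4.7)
The plan is to realize $A_h^\varphi$ as an absolutely convergent Bochner integral in the Banach space $\beauty_{v\otimes v}$. Using \eqref{eq:shiftrankone}, the defining integral for the localization operator can be rewritten as
\begin{equation*}
A_h^\varphi = \int_{\Rdd} h(z)\, \alpha_z(\varphi\otimes \varphi)\, dz,
\end{equation*}
since $\alpha_z(\varphi\otimes\varphi)\psi = V_\varphi\psi(z)\,\pi(z)\varphi$. If this integral converges absolutely in $\beauty_{v\otimes v}$, we immediately obtain $A_h^\varphi \in \beauty_{v\otimes v}$ with the bound $\|A_h^\varphi\|_{\beauty_{v\otimes v}}\lesssim \|h\|_{L^1_{v^2}}\|\varphi\|_{M^1_v}^2$.

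The key pointwise estimate is
\begin{equation*}
\|\alpha_z(\varphi\otimes\varphi)\|_{\beauty_{v\otimes v}} = \|\pi(z)\varphi\|_{M^1_v}^2 \lesssim v(z)^2\,\|\varphi\|_{M^1_v}^2,
\end{equation*}
using the rank-one identity \eqref{eq:rankoneprojtensor} for the first equality and the continuity bound $\|\pi(z)\psi\|_{M^1_v}\lesssim v(z)\|\psi\|_{M^1_v}$ from Proposition \ref{prop:modulationspaces}(e) for the inequality. Integrating against $|h(z)|$ and invoking $h\in L^1_{v^2}(\Rdd)$ yields the required finite $\beauty_{v\otimes v}$-norm bound. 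For strong measurability of the integrand, I would combine strong continuity of the time-frequency shift representation $z\mapsto \pi(z)$ on $M^1_v(\Rd)$ (a standard fact for modulation spaces) with the bilinear estimate
\begin{equation*}
\|\phi_1\otimes\psi_1-\phi_2\otimes\psi_2\|_{\beauty_{v\otimes v}} \leq \|\phi_1-\phi_2\|_{M^1_v}\|\psi_1\|_{M^1_v}+\|\phi_2\|_{M^1_v}\|\psi_1-\psi_2\|_{M^1_v},
\end{equation*}
which follows from \eqref{eq:rankoneprojtensor}. This shows $z\mapsto \alpha_z(\varphi\otimes\varphi)$ is continuous from $\Rdd$ into $\beauty_{v\otimes v}$, so multiplication by the scalar-measurable $h$ gives a Bochner-measurable integrand.

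Finally, one must verify that the element of $\beauty_{v\otimes v}$ produced in this way coincides with the original operator $A_h^\varphi$. Because $\beauty_{v\otimes v}\hookrightarrow \bo$ is continuous, the Bochner integral in $\beauty_{v\otimes v}$ projects to the corresponding Bochner integral in $\bo$, and evaluating on any $\psi\in L^2(\Rd)$ reproduces the defining $L^2$-valued Bochner integral of $A_h^\varphi$. The only mild technical point in the argument is the strong continuity of $z\mapsto \pi(z)\varphi$ in $M^1_v$; everything else reduces to routine Bochner integration against the growth estimate $v(z)^2$ provided by Proposition \ref{prop:modulationspaces}(e) together with $h\in L^1_{v^2}(\Rdd)$.
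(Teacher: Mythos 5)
Your proof is correct and takes essentially the same route as the paper: the paper writes the kernel of $A_h^\varphi$ as the Bochner integral $\int_{\Rdd} h(t,\xi)\,(\pi(t,\xi)\varphi)\otimes(\pi(t,-\xi)\overline{\varphi})\,dt\,d\xi$ in $M^1_{v\otimes v}(\Rdd)$ and bounds the integrand by $v(t,\xi)^2\|\varphi\|_{M^1_v}^2$, which is exactly your estimate transported through the isometry $S\mapsto \kernel_S$. Your additional remarks on strong measurability and on identifying the resulting element with the original operator are details the paper leaves implicit, not a different argument.
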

\begin{proof} 
	It is a straightforward calculation to check that the kernel of $A^\varphi_{h}$ is
\begin{equation*}
  \kernel_{A_h^\varphi}(x,\omega)=\int_{\Rdd} h(t,\xi) (M_\xi T_t \varphi)(x) (M_{-\xi} T_t \overline{\varphi})(\omega) \ dt d\xi.
\end{equation*}
For each $t$ and $\xi$, the function $$M_\xi T_t \varphi(x) M_{-\xi} T_t \overline{\varphi}(\omega)=\left(\pi(t,\xi)\varphi \otimes  \pi(t,-\xi) \overline{\varphi}\right)(x,\omega)$$
belongs to $M^1_{v\otimes v}$ by \eqref{eq:rankoneprojtensor} and part (e) of Proposition \ref{prop:modulationspaces}, with 
\begin{equation*}
 \left\| \pi(t,\xi)\varphi \otimes  \pi(t,-\xi) \overline{\varphi}\right\|_{M^1_{v\otimes v}} =  \|\pi(t,\xi)\varphi\|_{M^1_v}\| \pi(t,-\xi) \overline{\varphi}\|_{M^1_v} \leq v(t,\xi)^2 \|\varphi\|^2_{M^1_v},
\end{equation*}
where we have used that $v$ is symmetric in each coordinate. Hence 
\begin{equation*}
  \int_{\Rdd}\left\| h(t,\xi) \left(\pi(t,\xi)\varphi \otimes  \pi(t,-\xi) \overline{\varphi}\right)\right)\|_{M^1_{v\otimes v}} \ dt d\xi 
\end{equation*}
is bounded from above by 
\begin{equation*}
 \|\varphi\|^2_{M^1_v}  \int_{\Rdd} |h (t,\xi)| v(t,\xi)^2 \ dt d\xi,
\end{equation*}
 and this last integral converges by assumption. It follows that the integral $$\int_{\Rdd} h(t,\xi) \left(\pi(t,\xi)\varphi \otimes  \pi(t,-\xi) \overline{\varphi}\right) \ dt d\xi$$ is a convergent Bochner integral in $M^1_{v\otimes v}(\Rdd)$, thus $\kernel_{A_h^\varphi}\in M^1_{v\otimes v}(\Rdd)$.
\end{proof}
The setting $S=A^\varphi_h$ allows us to interpret many objects and results for Gabor g-frames in a natural way, in particular when $h=\chi_\Omega\in L^1_{v^2}(\Rdd)$ is the characteristic function of some compact $\Omega \subset \Rdd$. Since one has the well-known inversion formula 
\begin{equation*}
  \psi=\int_{\Rdd} V_{\varphi}\psi(z) \pi(z)\varphi \ dz \quad \text{ whenever } \|\varphi\|_{L^2}=1,
\end{equation*}
one interprets 
\begin{equation*}
 A^\varphi_{\chi_\Omega} \psi=\int_{\Omega} V_{\varphi}\psi(z) \pi(z)\varphi \ dz
\end{equation*}
as the part of $\psi$ that "lives in $\Omega$ in the time-frequency plane" \cite{Cordero:2003}. For brevity, we call $A_{\chi_\Omega}^\varphi \psi$ the $\Omega$-component of $\psi$.
 Since $\alpha_\lambda(A^\varphi_{\chi_\Omega})=A^\varphi_{T_\lambda (\chi_\Omega)}$, we see that $\alpha_\lambda(A^\varphi_{\chi_\Omega})\psi$ is the $\lambda+\Omega$-component of $\psi$, where $\lambda+\Omega=\{\lambda + z : z\in \Omega\}$. The corresponding analysis operator $$C_{A_{\chi_\Omega}^\varphi}(\psi)=\left\{A^\varphi_{T_\lambda (\chi_\Omega)}\psi\right\}_{\lambda \in \Lambda}$$ therefore analyzes $\psi$ by considering its $\lambda+\Omega$-components as $\lambda$ varies over $\Lambda$. 
 
 When $A^\varphi_{\chi_\Omega}$ actually generates a Gabor g-frame, Corollary \ref{cor:equivnorms} says that summability conditions on the $L^2$-norm of the $\lambda+\Omega$-components of $\psi$ precisely captures the modulation space norms of $\psi$, as first proved by \cite{Dorfler:2006,Dorfler:2011}. Furthermore, Corollary \ref{cor:gaborexpansions} shows us \textit{how} $\psi$ may be reconstructed from its $\lambda+\Omega$-components. By that result, there exists some $R:=A_{\chi_\Omega}^\varphi \frameop_{A^\varphi_{\chi_\Omega}}^{-1}\in \beauty_{v\otimes v}$ such that 
 \begin{equation} \label{eq:inversion}
  \psi=\sum_{\lambda \in \Lambda} \alpha_{\lambda}(R) \left(A^\varphi_{T_\lambda (\chi_\Omega)}\psi\right), 
\end{equation}
with unconditional convergence in whatever modulation space $M^p_m(\Rd)$, $p<\infty$, that $\psi$ belongs to.
By Remark \ref{rem:cohen}, there is also a Cohen's class distribution associated with $A^\varphi_{\chi_\Omega}$, namely
\begin{equation*}
  Q_{\left(A^\varphi_{\chi_\Omega}\right)^2}(\psi)(z)=\|A_{T_z (\chi_\Omega)}^\varphi \psi \|_{L^2}^2.
\end{equation*}
This Cohen's class distributions has an obvious interpretation: $\|A_{T_z (\chi_\Omega)}^\varphi \psi \|_{L^2}^2$ measures the size of the $z+\Omega$-component of $\psi$. By \eqref{eq:continuouscohen} one has the equality

\begin{equation*}
  \int_{\Rdd} \|A_{T_z (\chi_\Omega)}^\varphi \psi \|_{L^2}^2 \ dz = \|A_{\chi_\Omega}^\varphi\|_{\HS}^2 \|\psi\|_{L^2}^2.
\end{equation*}
This is a continuous version of the Gabor g-frame inequality \eqref{eq:gaborgframes} for localization operators, in the same way that Moyal's identity is the continuous version of the Gabor frame inequalities. 

 It should be remarked that one usually associates a different Cohen's class distribution (independently of $\Omega$) with localization operators $A_{\chi_\Omega}^\varphi$, namely the spectrogram $|V_\varphi \psi(z)|^2$ \cite[Example 8.1]{Luef:2018b}. 
\begin{remark}
\begin{enumerate}[(a)]
	\item Let us clarify the relation between our results and those of \cite{Dorfler:2011}. As mentioned, Corollary \ref{cor:equivnorms} was proved in \cite{Dorfler:2011} for localization operators $A_h^\varphi$ satisfying the conditions in Example \ref{example:locops}, without the notion of Gabor g-frames. The statements in Section \ref{sec:multiwindow} may all be deduced from proofs in \cite{Dorfler:2011}, and we have merely reinterpreted them as natural statements about Gabor g-frames. Proposition \ref{prop:locops} says that if we assume $h\in L^1_{v^2}(\Rdd)$ -- a stronger condition than $h\in L^1_v(\Rdd)$ as assumed in \cite{Dorfler:2011} -- then $A_h^\varphi$ satisfies the assumptions for the other results in Section \ref{sec:eqnorms}. In particular, we get the inversion formula \eqref{eq:inversion}.
	\item The discussion above generalizes without change to other Gabor g-frames\newline $\{\alpha_\lambda S\}_{\lambda \in \Lambda}$, but the natural interpretation of $\|\alpha_\lambda(S)\|_{L^2}^2$ above does not necessarily hold when $S$ is not a localization operator.
\end{enumerate}
\end{remark}
\section{Singular value decomposition and multi-window Gabor frames} \label{sec:singval}
From the very first paper published on g-frames \cite{Sun:2006}, it has been known that g-frames correspond to ordinary frames when a basis is chosen for the Hilbert spaces involved: if $\{A_i\}_{i\in I}\subset \bo$ and $\{\xi_n\}_{n\in \N}$ is an orthonormal basis of $L^2(\Rd)$, then $\{A_i\}_{i\in I}$ is g-frame if and only if $\{A_i^*\xi_n\}_{i\in I,n\in \N}$ is a frame for $L^2(\Rd)$\cite[Thm. 3.1]{Sun:2006}. Gabor g-frames must therefore be related to frames in $L^2(\Rd)$, and we will now make this connection explicit. By the singular value decomposition, any $S\in \HS$ may be expanded as 
\begin{equation*}
  S=\sum_{n\in \N} \xi_n \otimes \varphi_n,
\end{equation*}
where $\{\xi_n\}_{n\in \N}$ is an orthonormal basis for $L^2(\Rd)$ and $\sum_{n\in \N} \|\varphi_n\|_{L^2}^2<\infty$.
 For $\psi \in L^2(\Rd)$ we find using \eqref{eq:shiftrankone} that
\begin{align*}
  \|\alpha_\lambda(S)\psi\|_{L^2}^2&=\inner{\sum_{n\in \N}  V_{\varphi_n}\psi(\lambda)\pi(\lambda)\xi_n}{\sum_{m\in \N}  V_{\varphi_m}\psi(\lambda)\pi(\lambda)\xi_m}_{L^2}\\
  &= \sum_{m,n\in \N} V_{\varphi_n}\psi(\lambda)\overline{V_{\varphi_m}\psi(\lambda)}\inner{\pi(\lambda)\xi_n}{\pi(\lambda)\xi_m}_{L^2} \\
  &= \sum_{n\in \N}  |V_{\varphi_n}\psi(\lambda)|^2.
  \end{align*}
By comparing this with the definition \eqref{eq:gaborgframes} of a Gabor g-frame, we see that $S$ generates a Gabor g-frame if and only if there exist $A,B>0$ such that
\begin{equation*}
 	A\|\psi\|_{L^2}^2 \leq \sum_{\lambda \in \Lambda} \sum_{n\in \N} |V_{\varphi_n}\psi(\lambda)|^2 \leq B \|\psi\|_{L^2}^2\quad  \text{ for any } \psi \in L^2(\Rd),
\end{equation*}
 in other words, if and only if the functions $\left\{ \varphi_n \right\}_{n\in \N}$ generate a multi-window Gabor frame with countably many windows. Combining this with Proposition \ref{prop:MWframe}, we obtain the following result on multi-window Gabor frames with countably many generators. 

 \begin{theorem}
 	Assume that $\left\{ \varphi_n \right\}_{n\in \N}\subset M^1(\Rd)$ such that $\sum_{n\in \N}\|\varphi_n\|_{M^1}< \infty$. If $\left\{ \varphi_n \right\}_{n\in \N}$ generates a multi-window Gabor frame for $L^2(\Rd)$, i.e. there exist $A,B>0$ such that
\begin{equation} \label{eq:countMW}
 	A\|\psi\|_{L^2}^2 \leq \sum_{\lambda \in \Lambda} \sum_{n\in \N} |V_{\varphi_n}\psi(\lambda)|^2 \leq B \|\psi\|_{L^2}^2 \text{\ \ \ \  for any } \psi \in L^2(\Rd),
 	\end{equation}
 	then there exists $N\in \N$ such that $\left\{ \varphi_n \right\}_{n=1}^N$ generates a multi-window Gabor frame for $L^2(\Rd)$.
 \end{theorem}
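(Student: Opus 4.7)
The plan is to reduce the problem to the elementary fact that the set of invertible operators is open in $\bo$, by expressing the multi-window Gabor frame operator as an operator-norm convergent series whose tail must become negligible.

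First I let $\frameop$ denote the multi-window Gabor frame operator of $\{\varphi_n\}_{n\in\N}$, which by assumption \eqref{eq:countMW} is a positive, invertible element of $\bo$. For each $n$, set
\begin{equation*}
F_n \psi \;=\; \sum_{\lambda \in \Lambda} V_{\varphi_n}\psi(\lambda)\,\pi(\lambda)\varphi_n \;=\; \sum_{\lambda \in \Lambda} \alpha_\lambda(\varphi_n \otimes \varphi_n)\psi,
\end{equation*}
so that $F_n$ is the Gabor frame operator of the single-window system $\{\pi(\lambda)\varphi_n\}_{\lambda\in\Lambda}$, and $T_N := \sum_{n=1}^N F_n$ is the multi-window Gabor frame operator of $\{\varphi_n\}_{n=1}^N$. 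Writing $\frameop = T_N + \sum_{n>N} F_n$, the goal is to show $T_N \to \frameop$ in $\bo$ as $N \to \infty$.

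The key estimate is $\|F_n\|_{\bo} \lesssim \|\varphi_n\|_{M^1}^2$. Indeed $\varphi_n \otimes \varphi_n \in \beauty$ with $\|\varphi_n \otimes \varphi_n\|_{\beauty} = \|\varphi_n\|_{M^1}^2$, so Proposition \ref{prop:feichtingerwindow} applied to the periodization $F_n = \sum_{\lambda \in \Lambda} \alpha_\lambda(\varphi_n \otimes \varphi_n)$ yields exactly this bound. Since $\sum_n \|\varphi_n\|_{M^1} < \infty$, the numbers $\|\varphi_n\|_{M^1}$ are uniformly bounded by some constant $C$, so
\begin{equation*}
\sum_{n \in \N} \|F_n\|_{\bo} \;\lesssim\; \sum_{n \in \N} \|\varphi_n\|_{M^1}^2 \;\leq\; C \sum_{n \in \N} \|\varphi_n\|_{M^1} \;<\;\infty.
\end{equation*}
Thus $T_N$ converges to $\frameop$ in operator norm.

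Finally, since $\frameop$ is invertible in $\bo$ and the set of invertible operators is open (Neumann series), there is $N \in \N$ with $T_N$ invertible. As a positive, self-adjoint invertible operator, $T_N$ satisfies $\inner{T_N\psi}{\psi}_{L^2} \geq \|T_N^{-1}\|^{-1}\|\psi\|_{L^2}^2$, while the upper bound $\inner{T_N\psi}{\psi}_{L^2} \leq \inner{\frameop\psi}{\psi}_{L^2} \leq B\|\psi\|_{L^2}^2$ is immediate from $T_N \leq \frameop$. Rewriting $\inner{T_N\psi}{\psi}_{L^2} = \sum_{\lambda \in \Lambda}\sum_{n=1}^N |V_{\varphi_n}\psi(\lambda)|^2$ shows that $\{\varphi_n\}_{n=1}^N$ generates a multi-window Gabor frame.

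The only nontrivial step is the operator norm bound $\|F_n\|_{\bo} \lesssim \|\varphi_n\|_{M^1}^2$, but this is a direct application of the operator periodization result already established as Proposition \ref{prop:feichtingerwindow}. Everything else is a routine perturbation argument, and no appeal to the more intricate machinery of Sections \ref{sec:janssen}--\ref{sec:eqnorms} (or to Proposition \ref{prop:MWframe}, which would produce eigenfunctions of $\frameop$ rather than the original $\varphi_n$) is required.
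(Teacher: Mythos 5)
Your proof is correct, and it takes a genuinely different and more elementary route than the paper. The paper builds the operator $S=\sum_n \xi_n\otimes\varphi_n$ with $\{\xi_n\}$ a Wilson basis, checks $S\in\beauty$, observes that \eqref{eq:countMW} makes $S$ a Gabor g-frame generator, and then invokes Theorem \ref{theorem:alternativecharacterization} together with Proposition \ref{prop:MWframe}; the latter ultimately rests on the ``Gabor frames without inequalities'' characterization of Gr\"ochenig and the lemmas of D\"orfler--Gr\"ochenig, so the heavy lifting is done by that machinery. (Your worry that Proposition \ref{prop:MWframe} only produces eigenfunctions is addressed by the paper's footnote: its proof works for any absolutely convergent rank-one decomposition, so the original $\varphi_n$ survive.) Your argument instead isolates the single quantitative input $\|F_n\|_{\bo}\lesssim\|\varphi_n\otimes\varphi_n\|_{\beauty}=\|\varphi_n\|_{M^1}^2$ from Proposition \ref{prop:feichtingerwindow} and \eqref{eq:rankoneprojtensor}, deduces absolute operator-norm convergence of $\sum_n F_n$ from $\sup_n\|\varphi_n\|_{M^1}<\infty$ and $\sum_n\|\varphi_n\|_{M^1}<\infty$, and finishes with a standard perturbation of the invertible operator $\frameop$; one can even avoid the openness of the invertibles by noting that $\inner{T_N\psi}{\psi}_{L^2}\geq \left(A-\|\frameop-T_N\|_{\bo}\right)\|\psi\|_{L^2}^2$ once the tail is small. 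What your approach buys is a short, self-contained proof depending only on the periodization estimate; what the paper's approach buys is that the theorem appears as a corollary of the structural results on Gabor g-frames (injectivity of $C_S$ on $M^\infty(\Rd)$ characterizes the frame property), which is the point the paper wants to illustrate in Section \ref{sec:singval}.
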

 \begin{proof}
 	Let  $\left\{ \xi_n \right\}_{n\in \N}$ be an orthonormal basis for $L^2(\Rd)$ such that $\|\xi_n\|_{M^1}\leq C$ for some $C>0$ -- for instance a Wilson basis \cite[Prop. 12.3.8]{Grochenig:2001}. Then let $$S=\sum_{n\in \N} \xi_n \otimes \varphi_n.$$ By our assumptions $\sum_{n\in \N}\|\varphi_n\|_{M^1}< \infty$ and $\|\xi_n\|_{M^1}\leq C$, this sum converges absolutely in $\beauty$. Hence $S\in \beauty$. By the arguments preceding this theorem, \eqref{eq:countMW} ensures that $S$ generates a Gabor g-frame. Hence Theorem \ref{theorem:alternativecharacterization} and Proposition \ref{prop:MWframe} give\footnote{Proposition \ref{prop:MWframe} assumes that $\varphi_n$ come from the singular value decomposition, but this is not used in the proof besides using Lemma \ref{lem:svd} to ensure that the decomposition into rank-one operators converges.} the existence of $N\in \N$ such that $\left\{ \varphi_n \right\}_{n=1}^N$ generates a multi-window Gabor frame for $L^2(\Rd)$.
 \end{proof}
 \begin{remark}
 	The fact that Gabor g-frames correspond to multi-window Gabor frames with countably many generators, suggests that the duality theory of Gabor g-frames (in the sense of Ron-Shen duality, see \cite{Grochenig:2001}) is covered by the approach in \cite{Jakobsen:2018a}, where multi-window Gabor frames with countably many generators are considered.
 \end{remark}

\subsection*{Acknowledgements}
We wish to thank the author's supervisor Franz Luef for his insightful feedback on various drafts of the paper. We also thank Mads S. Jakobsen for helpful discussions. Finally, we thank Hans Feichtinger for his suggestion that the methods developed in the author's master's thesis might be applied to the theory of periodic operators.


\normalsize

\end{document}